%
%
%

\documentclass[graybox,envcountsame,envcountsect]{svmult}


\usepackage{mathptmx}       
\usepackage{helvet}         
\usepackage{courier}        
\usepackage{type1cm}        
%
\usepackage{makeidx}         
\usepackage{graphicx}        
\usepackage{multicol}        
\usepackage[bottom]{footmisc}


\usepackage{amssymb,amsmath,url}
\input xy
\xyoption{all}
\usepackage[utf8]{inputenc}

\newcommand\Ga{\Gamma}
\newcommand\ga{\gamma}

\newcommand{\ra}{\ensuremath{\rightarrow}}

\newcommand{\hol}{\ensuremath{\mathcal{O}}}

\newcommand{\BB}{\ensuremath{\mathbb{B}}}
\newcommand{\BT}{\ensuremath{\mathbb{T}}}
\newcommand{\CC}{\ensuremath{\mathbb{C}}}
\newcommand{\F}{\ensuremath{\mathbb{F}}}
\newcommand{\FF}{\ensuremath{\mathbb{F}}}
\newcommand{\HH}{\ensuremath{\mathbb{H}}}
\newcommand{\LL}{\ensuremath{\mathbb{L}}}
\newcommand{\NN}{\ensuremath{\mathbb{N}}}
\newcommand{\PP}{\ensuremath{\mathbb{P}}}
\newcommand{\QQ}{\ensuremath{\mathbb{Q}}}
\newcommand{\ZZ}{\ensuremath{\mathbb{Z}}}

\DeclareMathOperator{\Alb}{Alb}
\DeclareMathOperator{\Div}{Div}
\DeclareMathOperator{\divi}{div}
\DeclareMathOperator{\Ext}{Ext}
\DeclareMathOperator{\Fix}{Fix}
\DeclareMathOperator{\Hom}{Hom}
\DeclareMathOperator{\Sym}{Sym}
\DeclareMathOperator{\Tors}{Tors}

\newcounter{saveenumi}

\makeindex             


\begin{document}

\title*{Surfaces of general type with geometric genus zero: a survey}

\author{Ingrid Bauer, Fabrizio Catanese and Roberto Pignatelli\thanks{The present work took place in the realm of the DFG
Forschergruppe 790 "Classification of algebraic
surfaces and compact complex manifolds".}}
\institute{Ingrid Bauer \at Lehrstuhl Mathematik VIII, Mathematisches Institut der
Universit\"at Bayreuth; Universit\"atsstr. 30; D-95447 Bayreuth, Germany \email{Ingrid.Bauer@uni-bayreuth.de}
\and Fabrizio Catanese \at Lehrstuhl Mathematik VIII, Mathematisches Institut der
Universit\"at Bayreuth; Universit\"atsstr. 30; D-95447 Bayreuth, Germany \email{Fabrizio.Catanese@uni-bayreuth.de}
\and Roberto Pignatelli \at Dipartimento di Matematica della Universit\`a di Trento;\\ Via
Sommarive 14; I-38123 Trento (TN), Italy \email{Roberto.Pignatelli@unitn.it}}
%
%

\maketitle

\abstract{In the last years there have been several new constructions of surfaces of 
general type with $p_g=0$, and important progress on their classification. The present paper 
presents the status of the art on surfaces of general type with $p_g=0$, and gives an 
updated list of the existing surfaces, in the case where $K^2= 1,...,7$. 
It also focuses on certain important aspects of this classification.
\keywords{Surfaces of general type with genus 0. Godeaux surfaces. Campedelli surfaces. Burniat surfaces. Bloch conjecture. Actions of finite groups.\\
{\it 2000 Mathematics Subject Classification}: 14J29, 14J10, 14H30, 14J80, 20F05.}
}

\section*{Introduction}

   It is nowadays
well known that minimal surfaces of general type with $p_g(S) = 0$
have invariants
$p_g(S)  =  q (S)= 0, 1 \leq K_S^2 \leq 9$, hence they yield  a
finite number of irreducible components of the
moduli space of surfaces of general type.

At first glance  this class of surfaces seems rather  narrow, but we
want to report on recent results showing how varied and rich is the
botany of such surfaces, for which a
complete classification is still out of reach.

These surfaces represent for algebraic geometers an almost
prohibitive  test case about the possibility of
extending the fine Enriques   classification of special surfaces to
surfaces of general type.

On the one hand, they are the surfaces of general type which achieve
the minimal value $1$ for the
holomorphic  Euler-Poincar\'e characteristic $\chi(S) := p_g(S) -q(S)
+1$, so a naive (and false) guess is that
they should be ``easier'' to understand than  other surfaces with
higher invariants; on the other hand, there
are pathologies (especially concerning the pluricanonical systems)
or problems (cf. the Bloch conjecture
(\cite{bloch}) asserting that for surfaces with $p_g(S)  =  q (S)= 0$
the group of zero cycles modulo rational
equivalence should be  isomorphic to $\ZZ$),  which  only occur for
surfaces with $p_g = 0$.

Surfaces with $p_g(S)  =  q (S)= 0$ have a very old history, dating back to
1896 (\cite{enr96}, see also \cite{enrMS},
I, page 294, and  \cite{Cast}) when Enriques constructed the
so called Enriques surfaces in order to give a counterexample to the conjecture
of  Max Noether that any such surface should be rational, immediately followed by Castelnuovo
who constructed a surface  with $p_g(S)  =  q (S)= 0$ whose bicanonical pencil is  elliptic.

The first surfaces of general type with $p_g = q = 0$ were
constructed in the 1930' s by   Luigi Campedelli
and by Lucien Godeaux (cf. \cite{Cam},
\cite{god}): in their honour  minimal surfaces  of general type with
$K_S^2 = 1$  are called numerical Godeaux surfaces, and those with
$K_S^2 = 2$ are called numerical  Campedelli surfaces.

In the 1970's there was  a big revival of interest in the
construction of these surfaces and in a possible
attempt to classification.

After rediscoveries of these and other old examples a few new ones
were found through the efforts of several
authors, in particular Rebecca Barlow (\cite{barlow}) found a simply
connected numerical Godeaux surface,
which played a decisive role in the study of the differential
topology of algebraic surfaces and 4-manifolds
(and also in the discovery of K\"ahler Einstein metrics of opposite
sign on the same manifold, see \cite{clb}).

A (relatively short) list of the existing  examples appeared  in the
book \cite{bpv}, (see   \cite{bpv}, VII, $11$
and references therein, and see also
\cite{bhpv} for an updated slightly longer list).

There has been  recently  important progress on the topic, and the
goal of the present paper is to present  the
status of the art  on surfaces of general type with $p_g=0$, of
course focusing only on certain aspects of the
story.

Our article is organized as follows: in the first section we explain
the ``fine'' classification problem for
surfaces of general type with $p_g = q=0$.  Since the solution to
this problem is far from sight we pose some
easier problems which could have a greater chance to be solved in the
near future.

Moreover, we try to give an update on  the current knowledge
concerning  surfaces with $p_g=q=0$.

In the second section, we shortly review several reasons why there
has been  a lot of attention devoted to
surfaces  with geometric genus $p_g$ equal to zero: Bloch's
conjecture, the exceptional behaviour of the
pluricanonical maps and the interesting questions whether there are
surfaces of general type homeomorphic
to Del Pezzo surfaces. It is not possible that a surface of general
type be diffeomorphic to a rational surface.
This follows from Seiberg-Witten theory which  brought a breakthrough
establishing in particular that the
Kodaira dimension is a differentiable invariant of the 4-manifold
underlying an algebraic surface.

Since the first step towards a classification  is always the
construction of as many  examples as possible, we
describe in section three various  construction methods for algebraic
surfaces, showing how they lead to
surfaces of general type  with $p_g =0$. Essentially, there are two
different approaches, one is to take
quotients, by a finite or infinite group,  of known (possibly
non-compact) surfaces, and the other is in a
certain sense the dual one, namely constructing the surfaces as
Galois coverings of known surfaces.

The first approach (i.e., taking quotients)  seems at the moment  to
be  far more successful concerning the
number of examples that have been constructed by this method. On the
other hand, the theory of abelian
coverings seems much more useful to study the deformations of the
constructed  surfaces, i.e., to get hold of
the irreducible, resp. connected components of the corresponding moduli spaces.

In the last  section we review some recent results which have
been obtained by the first two authors,
concerning the connected components of  the moduli spaces
corresponding to Keum-Naie, respectively primary
Burniat surfaces.

\section{ Notation}

For typographical reasons, especially lack of space inside the
tables, we shall use the
following non standard notation for a finite cyclic group of order $m$:
$$\ZZ_m  : = \ZZ / m \ZZ  =  \ZZ / m. $$

\noindent
Furthermore $Q_8$ will denote the quaternion group of order 8,
$$Q_8 : = \{ \pm 1,  \pm i, \pm j,  \pm k \}.$$
\noindent
As usual,  $\mathfrak{S}_n$ is the symmetric group in $n$
letters, $\mathfrak{A}_n$ is the alternating
  subgroup.

\noindent
$D_{p,q,r}$ is the generalized dihedral group admitting the following 
presentation:
$$D_{p,q,r}=\langle x,y|x^p,y^q,xyx^{-1}y^{-r} \rangle,$$ while
$D_n=D_{2,n,-1}$ is the usual dihedral group of order $2n$.

$G(n,m)$ denotes the $m$-th group of order $n$ in the MAGMA database 
of small groups.

Finally, we have  semidirect products $H \rtimes \ZZ_r$; to specify
them, one should indicate the image $\varphi \in Aut(H)$ of the 
standard generator of $\ZZ_r$ in
$Aut(H)$. There is no space in the tables to indicate  $\varphi$, hence
we explain here which automorphism   $\varphi$ will be in the case of the
semidirect products occurring as fundamental groups.

For $H = \ZZ^2$ either $r$ is even, and then  $\varphi$ is $-Id$, or
$r=3$ and  $\varphi$ is the matrix $\begin{pmatrix}
-1&-1\\
1&0
\end{pmatrix}$.

Else $H$ is finite and $r=2$;
for
$H=\ZZ_3^2$,  $\varphi$ is
$-Id$; for $H=\ZZ_2^4$,  $\varphi$ is
$\begin{pmatrix} 1&0\\ 1&1
\end{pmatrix} \oplus \begin{pmatrix} 1&0\\ 1&1
\end{pmatrix}.$

Concerning the case where the group $G$ is a semidirect product, we
  simply refer to
\cite{4names} for more details.

Finally, $\Pi_g$ is the fundamental group of a compact Riemann 
surface of genus $g$.

\section{The classification problem and ``simpler'' sub-problems}

The history of surfaces with geometric genus equal to zero starts
about 120 years ago with a question posed
by Max Noether.

Assume that $S \subset \PP^N_{\CC}$ is a smooth projective surface.
Recall that the {\em geometric genus}
of $S$:
$$ p_g(S) :=h^0(S, \Omega^2_S) := \dim H^0(S, \Omega^2_S),
$$ and the {\em irregularity} of $S$:
$$ q(S) :=h^0(S, \Omega^1_S) := \dim H^0(S, \Omega^1_S),
$$ are {\em birational invariants} of $S$.

Trying to generalize the one dimensional situation, Max Noether asked
the following:

\begin{question}
   Let $S$ be a smooth projective surface with $p_g(S) = q(S) = 0$.
Does this imply that $S$ is rational?
\end{question}

The first negative answer to this question is, as we already wrote,
due to Enriques (\cite{enr96}, see also
\cite{enrMS}, I, page 294) and Castelnuovo, who constructed
counterexamples which are surfaces
of special type (this means, with Kodaira dimension $\leq 1$.
 {\em Enriques surfaces} have Kodaira
dimension equal to $0$, Castelnuovo surfaces have instead  Kodaira dimension $1$).

After the already mentioned examples  by   Luigi Campedelli and by
Lucien Godeaux and the new examples
found by Pol Burniat (\cite{burniat}), and by many other authors,
the  discovery and understanding of
surfaces of general type with $p_g=0$ was considered as a challenging
problem (cf. \cite{dolgachev}): a
complete  fine classification  however soon seemed to be  far out of reach.

   Maybe this was the motivation for D. Mumford to ask the following provocative

\begin{question}[Montreal 1980] Can a computer classify all surfaces
of general type with $p_g = 0$?
\end{question} Before we comment more on Mumford's question, we shall
recall some basic facts concerning
surfaces of general type.

Let $S$ be a {\em minimal} surface of general type, i.e., $S$ does
not contain any rational curve of self
intersection $(-1)$, or equivalently, the canonical divisor $K_S$ of
$S$ is nef and big ($K_S^2 > 0$). Then it
is well known that
$$ K_S^2 \geq 1, \  \chi(S):= 1-q(S) + p_g(S) \geq 1.
$$ In particular, $p_g(S) = 0 \ \implies \ q(S) = 0$. Moreover, we
have a coarse moduli space parametrizing
minimal surfaces of general type with fixed $\chi$ and $K^2$.

\begin{theorem} For each pair of natural numbers $(x,y)$ 
we have  the
Gieseker moduli space $\mathfrak{M}_{(x,y)}^{can}$,
 whose points correspond to the isomorphism classes of
minimal surfaces $S$ of general type
with $\chi(S) = x$ and $K^2_S =y$.

It is  a  quasi projective
scheme  which is  a coarse moduli space for
the canonical models of minimal
surfaces $S$ of general type with  $\chi(S) = x$ and $K^2_S =y$.

\end{theorem}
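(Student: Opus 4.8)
The plan is to construct $\mathfrak{M}_{(x,y)}^{can}$ as a GIT quotient of a suitable locally closed subscheme of a Hilbert scheme; this is Gieseker's theorem, and I would follow the strategy of his construction. First I would pass from a minimal surface $S$ of general type to its canonical model $X$, a normal projective surface with at worst rational double points, with $K_X$ ample, $K_X^2 = y$ and $\chi(\hol_X) = x$. By Bombieri's theorem the $n$-canonical map is a closed embedding of $X$ for every $n \ge 5$, and by Kodaira vanishing (in the form valid for rational double points) one has $h^i(X, nK_X) = 0$ for $i > 0$ and $n \ge 2$, so Riemann--Roch gives $h^0(X, nK_X) = x + \tfrac{1}{2} n(n-1) y =: P(n)$, a fixed polynomial. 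Hence every such $X$ is realized, via $|5K_X|$, as a closed subscheme of a fixed $\PP^N$, $N := P(5) - 1$, with fixed Hilbert polynomial $m \mapsto P(5m)$: this is the boundedness that gets the construction started.

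Next I would take the Hilbert scheme $\mathcal{H}$ of such subschemes of $\PP^N$ and cut out the locus $\mathcal{H}_0 \subset \mathcal{H}$ of points $[X \hookrightarrow \PP^N]$ for which $X$ is a surface with only rational double points, $K_X$ is ample, and the polarization is the $5$-canonical one, $\hol_X(1) \cong \hol_X(5K_X)$. These conditions are locally closed, so $\mathcal{H}_0$ is a locally closed subscheme; equipped with its reduced structure it carries a flat family $\mathcal{X} \to \mathcal{H}_0$ whose fibers are exactly the canonical models in question. The group $G := \mathrm{PGL}(N+1, \CC)$ acts on $\mathcal{H}$ preserving $\mathcal{H}_0$, two points lie in one $G$-orbit precisely when the surfaces are isomorphic (the embedding being intrinsic), and the stabilizer of $[X]$ is $\mathrm{Aut}(X)$, which is \emph{finite} because $X$ is of general type. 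For a family of canonical models $\pi \colon \mathcal{X}' \to T$ one has that $K_{\mathcal{X}'/T}$ is a relatively ample line bundle (rational double points are Gorenstein), $5K_{\mathcal{X}'/T}$ is relatively very ample with vanishing higher direct images, so $\pi_* \hol(5K_{\mathcal{X}'/T})$ is locally free of rank $N+1$; a local trivialization produces a morphism to $\mathcal{H}_0$, well defined up to $G$. Thus $\mathcal{H}_0$ is a local moduli scheme carrying a $G$-action, and $\mathfrak{M}_{(x,y)}^{can}$ must be the quotient $\mathcal{H}_0 / G$.

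The hard part will be making this quotient into a quasi-projective scheme with the coarse moduli property, and this is where Geometric Invariant Theory and Gieseker's central result enter. I would linearize the $G$-action (via $\mathrm{SL}(N+1)$ acting on the $m$-th Hilbert point for $m \gg 0$, embedded in a Grassmannian or projective space), form the GIT quotient of the stable locus --- which is automatically quasi-projective and a geometric quotient --- and then prove that $\mathcal{H}_0$ is contained in the stable locus. This last statement is the deep input: \emph{pluricanonically embedded surfaces with rational double points are asymptotically Hilbert stable}. Its proof goes through the Hilbert--Mumford numerical criterion: for every one-parameter subgroup $\lambda$ one must estimate the weight of the flat limit $\lim_{t \to 0} \lambda(t) \cdot X$ and use the ampleness of $K_X$, together with the control on the singularities, to force the numerical invariant to have the sign demanded by stability --- uniformly over the bounded family, so that a single $m$ works for all $X$. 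Granting stability, finiteness of stabilizers makes $\Phi \colon \mathcal{H}_0 \to M := \mathcal{H}_0 / G$ a geometric quotient onto a quasi-projective scheme; the local universal property of the family on $\mathcal{H}_0$, together with the fact that $\Phi$ collapses exactly the $G$-orbits, hence exactly the isomorphism classes of surfaces, yields both the bijection between closed points of $M$ and isomorphism classes of canonical models and the universal mapping property that characterizes a coarse moduli space. A more recent alternative avoids the delicate stability analysis: one shows directly that the moduli stack is a separated Deligne--Mumford stack of finite type --- separatedness from the valuative criterion applied to canonical models, finite type from boundedness --- invokes the Keel--Mori theorem to produce a coarse space, and deduces quasi-projectivity from an ampleness argument for a determinant-of-cohomology line bundle.
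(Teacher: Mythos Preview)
The paper does not actually prove this theorem: it is stated as a classical result (Gieseker's theorem, \cite{gieseker}) in a survey, with no argument given. Your proposal is therefore not to be compared with a proof in the paper, but it is a faithful and reasonably detailed outline of Gieseker's original GIT construction, including the key technical point (asymptotic Hilbert stability of pluricanonically embedded canonical models) and the modern alternative via Keel--Mori; as a sketch of the standard proof it is correct.
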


An upper  bound for $K_S^2$ is given by  the famous 
Bogomolov-Miyaoka-Yau inequality:

\begin{theorem}[\cite{miyaoka1}, \cite{yau},
\cite{yau2}, \cite{miyaoka2}]
Let $S$ be
a smooth surface of general type. Then
$$ K_S^2 \leq 9\chi(S),
$$ and equality holds if and only if the universal covering of $S$ is
the complex ball $\mathbb{B}_2:=\{(z,w)
\in \CC^2 | |z|^2 + |w|^2 <1\}$.
\end{theorem}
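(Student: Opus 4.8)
The plan is to deduce the inequality from Yau's solution of the Calabi conjecture together with the Chern--Weil theory of K\"ahler--Einstein metrics. First I would reduce to the minimal case: blowing up a point lowers $K^2$ by $1$ and leaves $\chi$ unchanged, while a non-minimal surface contains a $(-1)$-curve, hence a rational curve, and since $\mathbb{B}_2$ is (Brody) hyperbolic a ball quotient contains no rational curves; thus it suffices to treat a minimal $S$, and in the equality case $S$ is forced to coincide with its canonical model. Passing to the canonical model $X$ --- a normal surface with ample canonical class and at worst rational double points --- and invoking Noether's formula $12\chi(\hol_S) = K_S^2 + e(S)$, the asserted bound $K_S^2 \le 9\chi(S)$ becomes equivalent to the Chern number inequality $c_1^2 \le 3 c_2$, which is the form I would establish.

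Next I would apply the Aubin--Yau theorem: since $K_X$ is ample, $X$ carries an (orbifold) K\"ahler--Einstein metric $\omega$ with $\mathrm{Ric}(\omega) = -\omega$, unique up to scaling. On the smooth locus its curvature tensor splits as the constant-holomorphic-sectional-curvature piece determined by $\omega$ plus a trace-free remainder $W$, the Bochner tensor. The decisive point is the pointwise Chern--Weil identity, which expresses a representative of $3 c_2 - c_1^2$ built from $\omega$ as $\mathrm{const}\cdot|W|^2\,\omega^2$ with a positive constant; integrating over $X$ (the singular set is negligible, and one bookkeeps the orbifold contributions to $c_2$ at the rational double points, using $e_{\mathrm{orb}}(X) \le e(S)$) yields $3 c_2 - c_1^2 \ge 0$, as wanted.

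For the equality statement, equality in the integrated identity forces $W \equiv 0$, i.e. the K\"ahler--Einstein metric has constant negative holomorphic sectional curvature; by the classification of complex space forms this identifies the universal cover of $X$ --- which is therefore smooth, so $X = S$ --- with the unit ball $\mathbb{B}_2$ carrying a multiple of its Bergman metric. Conversely, if the universal cover of $S$ is $\mathbb{B}_2$, the descended Bergman metric is K\"ahler--Einstein with $W \equiv 0$, so equality holds, and moreover $S$ is automatically of general type, its canonical bundle being ample.

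I expect the main obstacle to lie on two fronts. First, the analytic core --- existence of the K\"ahler--Einstein metric --- is a deep nonlinear (complex Monge--Amp\`ere) result that I would take as a black box. Second, because the canonical model generally carries rational double points, the entire Chern--Weil computation has to be run in the orbifold category, and one must check carefully that the local corrections to $c_2$ at the singular points neither spoil the inequality nor occur when equality holds; alternatively one could replace this analytic route by Miyaoka's purely algebraic argument (Bogomolov semistability of $\Omega^1_S$ together with a detailed analysis of the configurations of $(-2)$-curves), which treats the minimal non-canonically-polarized case uniformly at the price of a longer combinatorial analysis.
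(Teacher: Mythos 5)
The paper does not prove this theorem: it is quoted as a classical result, and the only ``proof content'' in the text is the remark immediately following it, which records how the statement is assembled from the literature --- Miyaoka's first paper gives the general inequality $K_S^2\le 9\chi(S)$ algebraically, Yau proves it (and the equality characterization) under the extra hypothesis that $K_S$ is ample, and Miyaoka's second paper closes the gap by showing that equality forces $K_S$ to be ample. Measured against that, your outline is a correct and coherent packaging of the standard analytic proof, and it reproduces exactly the same logical skeleton in a single framework: your reduction to the canonical model with an orbifold K\"ahler--Einstein metric, together with the bookkeeping $e_{\mathrm{orb}}(X)\le e(S)$ (strict whenever rational double points are present, since each singular point contributes $1/|\Gamma|<1$ to $e_{\mathrm{orb}}$ while its exceptional tree contributes at least $2$ to $e(S)$), is precisely what replaces Miyaoka's two algebraic papers, and it correctly returns you to Yau's smooth ample case when equality holds. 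Two caveats you should make explicit rather than leave implicit: the existence of the K\"ahler--Einstein metric in the orbifold category on a canonical model with rational double points is \emph{not} literally Yau's theorem and needs a separate citation (Kobayashi's V-manifold version, or Tsuji's degenerate Monge--Amp\`ere approach for nef and big $K_S$); and in the equality case you should not argue ``the universal cover of $X$ is the ball, hence $X$ is smooth'' --- the smoothness of $X$ must come first, from the strict inequality $e_{\mathrm{orb}}(X)<e(S)$ in the presence of singularities, after which the space-form argument applies on the honest manifold $S=X$. With those two points spelled out, your argument is sound and is, in substance, the proof the paper is alluding to.
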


As a note for the non experts: Miyaoka proved in the first paper the
general inequality, which Yau only
proved under the assumption of ampleness of the canonical divisor
$K_S$. But Yau showed that if equality
holds, and  $K_S$ is ample, then the universal cover is the ball; in
the second paper Miyaoka showed  that if
equality holds, then necessarily  $K_S$ is ample.

\begin{remark} Classification of surfaces of general type with $p_g =0$
means therefore to "understand" the
nine moduli spaces $\mathfrak{M}_{(1,n)}^{can}$ for $1 \leq n \leq
9$, in particular, the connected
components of each $\mathfrak{M}_{(1,n)}^{can}$ corresponding to
surfaces with $p_g =0$. Here,
understanding means to describe the connected and irreducible
components and their respective dimensions.
\end{remark} Even if this is the "test-case" with the lowest possible
value for the invariant $\chi(S)$  for
surfaces of general type, still nowadays we are quite far from
realistically seeing how  this goal can be
achieved. It is in particular a quite non trivial question, given two
explicit surfaces with the same invariants
$(\chi, K^2)$, to decide whether they are in the same connected
component of the moduli space.

An easy observation, which indeed is quite useful, is the following:

\begin{remark} Assume that $S$, $S'$ are two minimal surfaces of general
type which are in the same connected
component of the moduli space. Then $S$ and $S'$ are orientedly
diffeomorphic through a diffeomorphism
preserving the Chern class of the canonical divisor;  whence $S$ and
$S'$ are homeomorphic, in particular
they have the same (topological) fundamental group.

Thus the fundamental group $\pi_1$ is the simplest invariant which
distinguishes  connected components
of the moduli space $\mathfrak{M}_{(x,y)}^{can}$.
\end{remark}

So, it seems natural to pose the following questions which sound
"easier" to solve than the complete
classification of surfaces with geometric genus zero.

\begin{question} What are the topological fundamental groups of
surfaces of general type with $p_g = 0$
and $K_S^2 = y$?
\end{question}
\begin{question} Is $\pi_1(S) =: \Gamma$ residually finite, i.e., is
the natural homomorphism
$\Gamma \rightarrow \hat{\Gamma} = \lim_{H \triangleleft_f
\Gamma} (\Gamma/H)$ from
$\Gamma$ to its profinite completion $\hat{\Gamma}$ injective?
\end{question}
\begin{remark} 1) Note that in general fundamental groups of algebraic
surfaces are not residually finite, but all
known examples have $p_g >0$ (cf. \cite{Toledo},
\cite{trento}).

2) There are examples of surfaces $S$, $S'$ with non isomorphic
topological fundamental groups, but whose
profinite completions are isomorphic (cf. \cite{serre}, \cite{galois}).
\end{remark}

\begin{question}\label{a,b} What are the best possible positive
numbers $a,b$ such that
\begin{itemize}
\item $K_S^2 \leq a$ $\implies$ $|\pi_1(S)| < \infty$,
\item $K_S^2 \geq b$ $\implies$ $|\pi_1(S)| = \infty$?
\end{itemize}
\end{question}

In fact,  by Yau's theorem $K_S^2 = 9$ $\implies$ $|\pi_1(S)| =
\infty$. Moreover by \cite{4names} there
exists a surface $S$ with $K_S^2 =6$ and finite fundamental group, so
$b \geq 7$. On the other hand, there
are surfaces with $K^2 = 4$ and infinite fundamental group (cf.
\cite{keum}, \cite{naie}), whence $a \leq 3$.

Note that all known minimal surfaces of general type $S$ with $p_g =
0$ and $K_S^2 =8$ are uniformized by
the bidisk $\BB_1 \times \BB_1$.

\begin{question} Is the universal covering of $S$ with $K_S^2 = 8$
always $\BB_1 \times \BB_1$?
\end{question} An affirmative answer to the above question would give
a negative answer to the following
question of F. Hirzebruch:

\begin{question}[F. Hirzebruch]\label{hirzebruch} Does there exist a
surface of general type homeomorphic
to $\PP^1 \times \PP^1$?

Or homeomorphic to the blow up $\F_1$ of $\PP^2$ in one point ?\end{question}

In the other direction, for $K_S^2 \leq 2$ it is known that the
profinite completion $\hat{\pi}_1$ is finite.
There is the following result:

\begin{theorem} 1) $K_S^2 = 1$ $\implies$ $\hat{\pi_1} \cong \ZZ_m$ for
$1 \leq m \leq 5$ (cf. \cite{tokyo}).

\noindent 2) $K_S^2 = 2$ $\implies$ $|\hat{\pi_1}| \leq 9$ (cf.
\cite{MilesCamp}, \cite{xiao}).
\end{theorem}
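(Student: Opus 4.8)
The plan is to reduce both statements to a bound on the finite quotients of $\Ga:=\pi_1(S)$. A finite quotient $G$ of $\Ga$, of order $n:=|G|$, corresponds to a connected \'etale Galois covering $\pi\colon X\to S$ with group $G$; since $\pi$ is \'etale, $X$ is again minimal of general type, $K_X=\pi^*K_S$, and by multiplicativity of $\chi(\hol)$ and of $K^2$
\[
\chi(\hol_X)=n\,\chi(\hol_S)=n,\qquad K_X^2=n\,K_S^2,
\]
so that $p_g(X)=\chi(\hol_X)-1+q(X)=n-1+q(X)\ge n-1$. Once one knows that every such $G$ has order $\le 5$ and is cyclic (part~(1)), resp.\ has order $\le 9$ (part~(2)), the assertion on the profinite completion $\hat{\pi_1}$ follows formally: a profinite group all of whose finite continuous quotients have bounded order is itself finite (the inverse system of its finite quotients stabilises) and hence coincides with its largest finite quotient, which in part~(1) is one of $\ZZ_1,\dots,\ZZ_5$.

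For part~(1) I would start from \emph{Noether's inequality} $K_X^2\ge 2p_g(X)-4$, valid for every minimal surface of general type. Here $K_X^2=n$, so together with $p_g(X)\ge n-1$ it gives $n\ge 2(n-1)-4$, i.e.\ $n\le 6$; the same computation shows that $q(X)\ge 1$ would already force $n\le 4$. It then remains to exclude $n=6$ (the groups $\ZZ_6$ and $\mathfrak{S}_3$) and the non-cyclic group $G=\ZZ_2^2$ of order~$4$, every other group of order $\le 5$ being cyclic. When $n=6$ the previous step forces $q(X)=0$ and $K_X^2=6=2p_g(X)-4$, so $X$ lies on the \emph{Noether line}; by Horikawa's structure theory $X$ is then (the minimal model of) a double cover of a surface of minimal degree in $\PP^4$, and its canonical map is $G$-equivariant because $G$ acts on $H^0(K_X)=H^0(\pi^*K_S)$ with trivial space of invariants (equal to $H^0(K_S)=0$). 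I would analyse the resulting fixed-point-free $G$-action, whose quotient has to be a numerical Godeaux surface, to reach a contradiction, and I would treat $G=\ZZ_2^2$ analogously through the three intermediate unramified double covers --- minimal surfaces with $\chi=2$, $K^2=2$ carrying a free involution over $S$ --- together with the classification of surfaces with those invariants. This is in substance the argument of \cite{tokyo}.

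For part~(2) the elementary estimates are far weaker. With $K_X^2=2n$, Noether's inequality gives only $q(X)\le 3$ and no bound on $n$ whatsoever, since $K_X^2=2n>2p_g(X)-4$ as soon as $q(X)\le 2$, while the Bogomolov--Miyaoka--Yau inequality $K_X^2\le 9\chi(\hol_X)$ is equally vacuous here. I would therefore pass to the bicanonical systems: on $S$ one has $h^0(2K_S)=\chi(\hol_S)+K_S^2=3$ (higher cohomology vanishing by Kawamata--Viehweg, as $2K_S=K_S+K_S$ with $K_S$ big and nef), so $|2K_S|$ is a net defining a rational map of degree $\le 8$ onto $\PP^2$, whereas $h^0(2K_X)=\chi(\hol_X)+K_X^2=3n$; for abelian $G$ this breaks up as $\bigoplus_{\mathcal{L}}H^0(2K_S\otimes\mathcal{L})$ over the $n$ torsion line bundles trivialised by $\pi$, each summand of dimension~$3$. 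Following the approach of Xiao and of Reid, I would then control the degree and base locus of the bicanonical maps of the covers $X$ as $n$ grows, which forces $n\le 9$, and bound the $2$-part of $G$ separately by~$8$ by inspecting the intermediate unramified $\ZZ_2$-covers (cf.\ \cite{MilesCamp}, \cite{xiao}).

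The step I expect to be the main obstacle, in both parts, is exactly the handling of the degenerate configurations: for $K_S^2=1$ the coarse numerical inequalities collapse to an equality on the Noether line (when $n=6$) or simply yield nothing extra (for $G=\ZZ_2^2$), and for $K_S^2=2$ they are vacuous from the start, so one must replace them by the fine birational geometry of surfaces with small invariants --- Horikawa's classification on the Noether line in the first case, and the detailed analysis of the (bi)canonical maps and rings of the \'etale covers in the second. Part~(2) is the genuinely delicate one, since there the entire weight of the bound rests on this pluricanonical study rather than on any elementary estimate.
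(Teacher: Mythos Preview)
The paper does not prove this theorem; it is quoted as a known result with references to \cite{tokyo}, \cite{MilesCamp}, \cite{xiao}, so there is no ``paper's own proof'' to compare against.

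Your outline is a fair reconstruction of the strategy behind the cited references. For part~(1), passing to an \'etale $G$-cover, applying Noether's inequality to obtain $n\le 6$, and then using Horikawa's description of surfaces on the Noether line to exclude $n=6$ (with a separate treatment of $G=(\ZZ/2)^2$) is essentially the route of Bombieri (who first reached $n\le 6$) and Reid \cite{tokyo} (who sharpened it). One remark: those arguments work largely with torsion divisors on $S$ itself---analysing $|K_S+\tau|$ and $|2K_S+\tau|$ for a torsion class $\tau$---rather than only on the cover $X$; the two viewpoints are equivalent, but the torsion-divisor side tends to be more efficient for the exclusions you flag as hard. Your reduction from ``all finite quotients have order $\le 5$'' to ``$\hat\pi_1$ is cyclic of order $\le 5$'' is correct, since every group of order $\le 5$ is abelian and hence every finite quotient factors through $H_1(S,\ZZ)$.

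For part~(2) your sketch is, as you concede, only programmatic: the dimension count $h^0(2K_X)=3n$ and the eigenspace decomposition are correct, but nothing you have written yet produces the bound $n\le 9$. The arguments in \cite{MilesCamp} and \cite{xiao} rely on a substantially finer analysis (of the canonical and bicanonical rings, of genus-$2$ fibrations, and of the possible torsion configurations) than what you indicate. So part~(1) is a correct plan with the genuinely hard steps correctly located; part~(2) identifies the right battlefield but does not yet contain the idea that wins it.
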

The bounds are sharp in both cases, indeed for the case $K_S^2 = 1$
there are examples
with $\pi_1(S) \cong \ZZ_m$ for  all $1 \leq m \leq 5$ and
there is the following conjecture

\begin{conjecture}[M. Reid]\label{milesconj}
$\mathfrak{M}_{(1,1)}^{can}$ has exactly five irreducible components
corresponding to each choice
$\pi_1(S) \cong \ZZ_m$ for  all $1 \leq m \leq 5$.
\end{conjecture}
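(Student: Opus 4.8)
The plan is to reduce the conjecture to two statements: (I) $\pi_1(S)$ is \emph{finite} for every minimal $S$ of general type with $p_g=0$, $K_S^2=1$; and (II) for each $m\in\{1,\dots,5\}$ the set $\mathcal M_m:=\{[S]\in\mathfrak{M}_{(1,1)}^{can}\mid \pi_1(S)\cong\ZZ_m\}$ is irreducible (non-emptiness of all five is already known). Assuming (I), the result quoted above that $K_S^2=1$ forces $\hat\pi_1\cong\ZZ_m$ with $1\le m\le5$ gives $\mathfrak{M}_{(1,1)}^{can}=\bigsqcup_{m=1}^5\mathcal M_m$. Since deformation equivalent surfaces of general type are orientedly diffeomorphic, $\pi_1$ is locally constant on $\mathfrak{M}_{(1,1)}^{can}$, so each $\mathcal M_m$ is a union of connected components; granting (II) as well, each $\mathcal M_m$ is then a single connected, hence irreducible, component, and the five are distinct and exhaust $\mathfrak{M}_{(1,1)}^{can}$. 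So everything comes down to (I) and to the irreducibility of the five strata.

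For $m=5,4,3$ I would run Reid's structure theory on the torsion cover. Set $T=\Tors(S)=H_1(S,\ZZ)\cong\ZZ_m$ and let $p\colon X\to S$ be the associated connected étale $\ZZ_m$-cover; then $X$ is minimal of general type with $\chi(\hol_X)=m$, $K_X^2=m$, and (by Reid's analysis) $q(X)=0$, so $p_g(X)=m-1$, and $\ZZ_m$ acts freely on $X$ with quotient $S$. For $m=5$ the canonical map embeds the canonical model of $X$ as a quintic surface in $\PP^3$, equivariantly for a fixed-point-free linear $\ZZ_5$-action of weights $(0,1,2,3)$; such quintics form a Zariski-open subset of an irreducible linear system, and passing to the quotient by the connected normaliser of this $\ZZ_5$ in $PGL(4,\CC)$ shows $\mathcal M_5$ is irreducible. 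For $m=4$ and $m=3$ one realises $X$, or directly $S$ via its canonical ring $\bigoplus_n H^0(nK_S)$, as a subvariety of codimension $\le3$ in a suitable weighted projective space; the homogeneous ideal then has the standard shape of a Gorenstein ideal of codimension $\le 3$ (a complete intersection, resp. the ideal of $4\times4$ Pfaffians of a skew-symmetric matrix, by Buchsbaum--Eisenbud), and imposing $\ZZ_m$-equivariance carves out an irreducible parameter space dominating $\mathcal M_m$. This yields irreducibility of $\mathcal M_5,\mathcal M_4,\mathcal M_3$, and also gives (I) for these values, since the explicit models visibly have finite (indeed abelian) fundamental group.

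The genuinely hard part is $m=2$ and $m=1$. For $m=2$ the canonical double cover $X\to S$ has $p_g(X)=1$, $q(X)=0$, $K_X^2=2$; here one would study the pair $(X,\sigma)$, $\sigma$ the covering involution, through the canonical ring of $X$ and its eigenspace decomposition, aiming to present all such pairs by an irreducible family of equation/branch data — delicate, but comparable in spirit to known structure results of Catanese and collaborators. For $m=1$, the simply connected case, one must show that the known constructions — Barlow's surfaces, the Craighero--Gattazzo surface, the surfaces obtained by $\QQ$-Gorenstein smoothing of stable surfaces with quotient singularities, etc. — all lie in one irreducible family; the natural route is to pass to the KSBA compactification $\overline{\mathfrak{M}}_{(1,1)}$, classify the stable surfaces occurring on the relevant boundary stratum, show each has an unobstructed $\QQ$-Gorenstein smoothing, and conclude that every simply connected numerical Godeaux surface degenerates into the Barlow component. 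The main obstacles live precisely here: first, establishing (I) when $\hat\pi_1$ is trivial or $\ZZ_2$, i.e. excluding infinite fundamental groups with such small profinite completion (a perfect group in the $m=1$ case), for which there is no general technique; and second, the classification of all degenerations of simply connected numerical Godeaux surfaces needed for the irreducibility of $\mathcal M_1$. By contrast, the cases $m=3,4,5$ are essentially settled by Reid's work.
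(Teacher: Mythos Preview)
This statement is a \emph{conjecture}, not a theorem: the paper does not prove it, and indeed immediately after stating it the paper says only that ``this conjecture is known to hold true for $m\geq 3$'' (citing Reid). There is therefore no proof in the paper to compare your proposal against.

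Your proposal is itself not a proof but a strategic outline, and you are explicit about this. Your reduction to (I) finiteness of $\pi_1$ and (II) irreducibility of each $\mathcal M_m$ is the natural one, and your treatment of $m=5,4,3$ via the torsion cover and the canonical ring is exactly Reid's approach, which is what the paper cites as settling those cases. You then correctly flag the genuine open problems: for $m=1,2$ neither (I) --- ruling out infinite $\pi_1$ with profinite completion trivial or $\ZZ_2$ --- nor (II) --- irreducibility of the simply connected and $\ZZ_2$-torsion strata --- is known, and the approaches you sketch (KSBA boundary analysis, $\QQ$-Gorenstein smoothing) are plausible lines of attack but are far from complete arguments.

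So the honest summary is: your write-up accurately reflects the state of the art recorded in the paper, including which parts are done and which are open; it is not, and does not claim to be, a proof of the conjecture.
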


This conjecture is known to hold true for $m \geq 3$ (cf. \cite{tokyo}).

One can ask similar questions:

\begin{question}

2)  Does $K_S^2 = 2$, $p_g(S)= 0$ imply that $|\pi_1(S)| \leq 9 $?

\noindent 3) Does $K_S^2 = 3$ (and $p_g(S)= 0$) imply that
$|\pi_1(S)| \leq 16$?
\end{question}

\subsection{Update on surfaces with $p_g = 0$} There has been
recently  important progress on surfaces of
general type with $p_g=0$ and the current situation is as follows:

\medskip
\noindent
\underline{$K_S^2 = 9$}:  these surfaces have the unit  ball in
$\CC^2$ as universal cover, and their
fundamental group is an arithmetic subgroup  $\Ga$ of $ SU (2,1)$.

This case seems to be completely classified through exciting new work
of Prasad and Yeung and of Cartright  and Steger 
(\cite{p-y}, \cite{p-yadd}, \cite{CS}) asserting that  the moduli
space  consists exactly of
100 points, corresponding to 50 pairs of complex
conjugate surfaces (cf. \cite{kk}).

\medskip
\noindent
\underline{$K_S^2 = 8$}: we  posed the  question  whether  in this
case the universal cover must be the bidisk
in $\CC^2$.

Assuming this, a complete classification should be possible.

The classification has already been accomplished  in
\cite{bcg}
for the reducible case where there is a finite \'etale cover which is
isomorphic
to a product of curves. In this case there are exactly 18
irreducible connected components
of the moduli space: in fact, 17 such components are listed in \cite{bcg},
and recently Davide Frapporti (\cite{frap}), while rerunning the classification
program, found one more family whose existence had been excluded
by an incomplete analysis.
There are many examples, due to Kuga and Shavel (\cite{kug},
\cite{shav}) for the irreducible case, which
yield (as in the case $K_S^2 = 9$) rigid surfaces (by results of Jost
and Yau \cite{jostyau}); but a complete
classification of this second case is still missing.

The constructions of minimal surfaces of general type with $p_g=0$
and with $K^2_S \leq 7$ available in the
literature (to the best of the authors' knowledge, and excluding the
recent results of the authors, which will
be described later)  are listed in table \ref{tabknown}.

We proceed to a description, with the aim of putting the recent
developments in proper perspective.

\begin{table}
\caption{Minimal surfaces of general type with
    $p_g=0$ and $K^2 \leq 7$ available in the literature}
\label{tabknown}
\begin{tabular}[ht]{|c|c|c|c|l|}
\hline
$K^2$  & $\pi_1$& $\pi_1^{alg}$ & $H_1$& References \\
\hline
\hline 1  &$\ZZ_5$&$\ZZ_5$ &
$\ZZ_5$&\cite{godold}\cite{tokyo}\cite{miyaokagod}\\
    &$\ZZ_4$&$\ZZ_4$ &
$\ZZ_4$&\cite{tokyo}\cite{op}\cite{barlow2}\cite{naie94}\\
    &?& $\ZZ_3$ &  $\ZZ_3$&\cite{tokyo}\\
    &$\ZZ_2$&$\ZZ_2$ & $\ZZ_2$&\cite{barlow2}\cite{inoue}\cite{kl}\\
    &?&$\ZZ_2$ & $\ZZ_2$&\cite{werner}\cite{wer97}\\
    &$\{1\}$& $\{1\}$&  $\{0\}$ &\cite{barlow}\cite{lp} \\
    & ? & $\{1\}$ &  $\{0\}$& \cite{cg}\cite{dw}  \\
\hline
\hline 2  &$\ZZ_9$&$\ZZ_9$ & $\ZZ_9$& \cite{mlp} \\
     &$\ZZ_3^2$&$\ZZ_3^2$&$\ZZ_3^2$ &\cite{xiao}\cite{mlp} \\

&$\ZZ_2^3$&$\ZZ_2^3$&$\ZZ_2^3$&\cite{Cam}\cite{MilesCamp}\cite{peterscamp}\cite{inoue}\cite{naie94}\\
     &\ $\ZZ_2 \times \ZZ_4$\ &\ \ \ $\ZZ_2 \times \ZZ_4$\ \ \ &\ \  $\ZZ_2 \times \ZZ_4$\  \  &
\cite{MilesCamp}\cite{naie94}\cite{keum} \\
     &$\ZZ_8$&$\ZZ_8$& $\ZZ_8$ & \cite{MilesCamp} \\
     &$Q_8$&$Q_8$ & $\ZZ_2^2$ &\cite{MilesCamp} \cite{beauville96}\\
     &$\ZZ_7$&$\ZZ_7$& $\ZZ_7$ & \cite{cvg} \\
     &?&$\ZZ_6$& $\ZZ_6$ & \cite{np} \\
     &$\ZZ_5$&$\ZZ_5$ & $\ZZ_5$ & \cite{Babbage}\cite{sup} \\
&$\ZZ_2^2$&$\ZZ_2^2$ & $\ZZ_2^2$ &
\cite{inoue}\cite{keum} \\
    &?&$\ZZ_3$ & $\ZZ_3$ & \cite{lp2} \\
    &$\ZZ_2$&$\ZZ_2$ & $\ZZ_2$ & \cite{kl} \\
    &?&$\ZZ_2$ & $\ZZ_2$ & \cite{lp2} \\
     &$\{1\}$& $\{1\}$ & $\{0\}$ & \cite{lp} \\
\hline
\hline 3  &$\ZZ_2^2 \times \ZZ_4$&$\ZZ_2^2 \times \ZZ_4$ & $\ZZ_2^2
\times \ZZ_4$& \cite{naie94}
\cite{keum} \cite{mlp3} \\
     &$Q_8 \times \ZZ_2$&$Q_8 \times \ZZ_2$ & $\ZZ_2^3$ &
\cite{burniat}\cite{peters} \cite{inoue}\\
    &$\ZZ_{14}$&$\ZZ_{14}$ & $\ZZ_{14}$ & \cite{CS} \\
    &$\ZZ_{13}$&$\ZZ_{13}$ & $\ZZ_{13}$ & \cite{CS} \\
     &$Q_8$&$Q_8$ & $\ZZ_2^2$ & \cite{CS} \\
     &$D_4$&$D_4$ & $\ZZ_2^2$& \cite{CS} \\
  &$\ZZ_2 \times \ZZ_4$&$\ZZ_2 \times \ZZ_4$ &$\ZZ_2 \times \ZZ_4$ & \cite{CS} \\
    &$\ZZ_7$&$\ZZ_7$ & $\ZZ_7$ & \cite{CS} \\
    &$\mathfrak S_3$&$\mathfrak S_3$ & $\ZZ_2$ & \cite{CS} \\
    &$\ZZ_6$&$\ZZ_6$ & $\ZZ_6$ & \cite{CS} \\
    &$\ZZ_2 \times \ZZ_2$&$\ZZ_2 \times \ZZ_2$ &$\ZZ_2 \times \ZZ_2$ & \cite{CS} \\
    &$\ZZ_4$&$\ZZ_4$ & $\ZZ_4$ & \cite{CS} \\
    &$\ZZ_3$&$\ZZ_3$ & $\ZZ_3$ & \cite{CS} \\
    &$\ZZ_2$&$\ZZ_2$ & $\ZZ_2$ & \cite{kl}\cite{CS} \\
     &? &? & $\ZZ_2$ & \cite{pps3b} \\
     &$\{1\}$&$\{1\}$ & $\{0\}$ &\cite{pps3}\cite{CS} \\
\hline
\hline 4  &$1 \rightarrow \ZZ^4 \rightarrow \pi_1 \rightarrow
\ZZ_2^2\rightarrow 1$&$\hat{\pi}_1$& $\ZZ_2^3 \times \ZZ_4$ &
\cite{naie94}\cite{keum}\\
     &$Q_8 \times \ZZ_2^2$ &$Q_8 \times \ZZ_2^2$ & $\ZZ_2^4$
&\cite{burniat}\cite{peters}\cite{inoue}\\
     &$ \ZZ_2$&$ \ZZ_2$ & $ \ZZ_2$ & \cite{park} \\
     &$\{1\}$&$\{1\}$ & $\{0\}$ & \cite{pps4} \\
\hline
\hline 5   &$Q_8 \times \ZZ_2^3$&$Q_8 \times \ZZ_2^3$ & $\ZZ_2^5$ &
\cite{burniat}\cite{peters}\cite{inoue}\\
     &?&?&?& \cite{inoue}\\
\hline
\hline 6   &$1 \rightarrow \ZZ^6 \rightarrow \pi_1 \rightarrow
\ZZ_2^3\rightarrow 1$&$\hat{\pi}_1$& $\ZZ_2^6$ &
\cite{burniat}\cite{peters}\cite{inoue}\\
   &$1 \rightarrow \ZZ^6 \rightarrow \pi_1 \rightarrow
\ZZ_3^3\rightarrow 1$&$\hat{\pi}_1$& $\ZZ_3^3 \subset H_1$ & \cite{kulikov}\\
     &?&?&?& \cite{inoue}\cite{mlp4b}\\
\hline
\hline 7   &\ \ $1 \rightarrow \Pi_3 \times \ZZ^4 \rightarrow \pi_1 \rightarrow
\ZZ_2^3\rightarrow 1$\ \ &$\hat{\pi}_1$&?& \cite{inoue}\cite{mlp01}
\cite{toappear}\\
\hline
\end{tabular}
\end{table}

\medskip
\noindent
\underline{$K_S^2 = 1$}, i.e., {\em numerical Godeaux surfaces}:
recall that by conjecture \ref{milesconj}
the moduli space should have exactly five irreducible connected
components, distinguished by the order of
the fundamental group, which should be cyclic of order at most $5$
(\cite{tokyo} settled the case where the
order of the first homology group is at least 3; \cite{barlow},
\cite{barlow2} and \cite{werner} were the first
to show the occurrence of the two other groups).

\medskip
\noindent
\underline{$K_S^2 = 2$}, i.e., {\em numerical Campedelli surfaces}:
here, it is  known that the order of the
algebraic fundamental group is at most $9$, and the cases of order
$8,9$ have been classified by Mendes
Lopes,
     Pardini and Reid (\cite{mlp}, \cite{mlpr}, \cite{MilesCamp}), who
     showed in particular that the fundamental group equals the algebraic
     fundamental group and cannot be the dihedral group $D_4$ of order
$8$. Naie (\cite{naie}) showed that
the group $D_3$ of order $6$ cannot occur as the fundamental group of
a numerical Campedelli surface. By
the work of  Lee and Park (\cite{lp}), one knows that there exist
simply connected numerical Campedelli
surfaces.

Recently, in \cite{4names}, \cite{bp},  the construction of eight families of
     numerical Campedelli surfaces with fundamental group
$\ZZ_3$ was given. Neves and Papadakis  (\cite{np}) constructed a
numerical Campedelli surface with
algebraic fundamental group $\ZZ_6$, while Lee and Park (\cite{lp2})
constructed one with algebraic
fundamental group $\ZZ_2$, and one with algebraic fundamental group
$\ZZ_3$ was added in the second
version of the same paper. Finally Keum and Lee (\cite{kl}) constructed examples with topological 
fundamental group $\ZZ_2$.

Open conjectures are:

\begin{conjecture} Is the fundamental group $\pi_1(S)$ of a numerical
Campedelli surface finite?
\end{conjecture}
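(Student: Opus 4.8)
The plan is first to reduce the conjecture, via the algebraic fundamental group, to a statement about finitely many étale covers, and then to attack those covers geometrically. Recall that for a smooth complex projective surface $\pi_1^{alg}(S)$ is the profinite completion $\widehat{\pi_1(S)}$, and that for a numerical Campedelli surface it is known, from the work of Mendes Lopes, Pardini, Reid and Xiao cited above, that $|\pi_1^{alg}(S)|\le 9$. Since a residually finite group which embeds into a finite group is finite (and conversely a finite group is residually finite), $\pi_1(S)$ is finite if and only if the canonical surjection $\pi_1(S)\twoheadrightarrow\pi_1^{alg}(S)=:G$ is injective, i.e.\ is an isomorphism; and this happens exactly when the connected étale $G$--Galois cover $\hat S\to S$ is simply connected. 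Thus the conjecture is equivalent to: \emph{every} minimal surface of general type carrying a free action of a finite group $G$ with $|G|\le 9$ whose quotient is a numerical Campedelli surface is simply connected. Such an $\hat S$ has $\chi(\hol_{\hat S})=|G|$, $K^2_{\hat S}=2|G|$ and $e(\hat S)=12|G|-2|G|=10|G|$, and --- because $N:=\pi_1(\hat S)=\ker(\pi_1(S)\to G)$ has trivial profinite completion by a normal--core argument, whence $N^{ab}=0$ --- also $q(\hat S)=0$ and $p_g(\hat S)=|G|-1$.

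The cases $|G|\in\{8,9\}$, that is $G\in\{\ZZ_9,\ZZ_3^2,\ZZ_8,\ZZ_2\times\ZZ_4,\ZZ_2^3,Q_8\}$ (with $D_4$ and $D_3$ excluded outright), are already settled: these surfaces have been classified by Mendes Lopes, Pardini and Reid, and the equality $\pi_1=\pi_1^{alg}$ --- i.e.\ the simple connectedness of $\hat S$ --- is part of their analysis. One is left with $|G|\le 7$, where $p_g(\hat S)=|G|-1\le 6$ and no structure theorem applies. Here I would try to exploit the bicanonical map: since $h^0(S,2K_S)=\chi(\hol_S)+K_S^2=3$, the two--dimensional system $|2K_S|$ defines a $G$--equivariant rational map $\varphi\colon S\to\PP^2$; pulling it back to $\hat S$ produces a $G$--invariant sublinear system of $|2K_{\hat S}|$ of projective dimension at least $2$, and combining this with the numerical constraints above and a study of the branch divisor should, in favourable cases, determine $\hat S$ closely enough to read off $\pi_1(\hat S)$, exactly as one can do for the examples already in the literature.

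The step I expect to be the real obstacle is precisely this last range, above all $G=\{1\}$: one must prove that a numerical Campedelli surface with $\pi_1^{alg}(S)=1$ is genuinely simply connected --- equivalently, that $\pi_1(S)$ cannot be an infinite finitely generated perfect group with no nontrivial finite quotient. There is no purely group--theoretic reason why such a group cannot exist, and, unlike the borderline case $K^2_S=8$ (where the universal cover is conjecturally the bidisk), the smallness of $K^2_S$ does not on its own force $\pi_1$ to be finite, since surfaces with $p_g=q=0$, $K^2=4$ and infinite fundamental group already exist (cf.\ Question~\ref{a,b}). Hence geometric input is unavoidable, and a complete proof would seem to need either a \emph{full} classification of numerical Campedelli surfaces --- the very problem the survey declares out of reach --- or a new a priori finiteness theorem for $\pi_1$; the cleanest sufficient form of the latter is just residual finiteness of $\pi_1(S)$, which already yields $|\pi_1(S)|=|\pi_1^{alg}(S)|\le 9$. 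I would therefore organise the write--up as the settled ``large $G$'' cases, disposed of by the cited classifications, together with the residual--finiteness sub--problem for $|G|\le 7$, which is where an essentially new idea is required.
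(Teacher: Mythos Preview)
The statement you are addressing is presented in the paper as an \emph{open conjecture}; there is no proof in the paper to compare your proposal against. The paper records only what is known --- that $|\pi_1^{alg}(S)|\le 9$ and that the cases of order $8$ and $9$ have been classified with $\pi_1=\pi_1^{alg}$ --- and then simply poses the finiteness of $\pi_1(S)$ as a conjecture, immediately followed by the further question of which groups of order $\le 9$ actually occur. No argument is offered, because none is known.

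Your proposal is, accordingly, not a proof but a (largely correct) reformulation of the problem. The reduction to simple connectedness of the maximal abelian \'etale cover $\hat S$ is sound, and your normal--core argument that $\pi_1^{alg}(\hat S)=1$ (hence $q(\hat S)=0$) is valid because $\pi_1(\hat S)$ is finitely generated. You are also right that the cases $|G|\in\{8,9\}$ are settled by the cited classifications. But the heart of the matter --- the cases $|G|\le 7$, and above all $G=\{1\}$ --- you yourself flag as requiring ``an essentially new idea'': either residual finiteness of $\pi_1(S)$, or a full classification of numerical Campedelli surfaces. Neither is available, and the bicanonical--map approach you sketch for the intermediate range is speculative (``should, in favourable cases''). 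So your write--up correctly locates the obstruction, but it is a discussion of why the conjecture is hard rather than a proof of it; this is entirely consistent with the paper's own stance.
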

\begin{question}\label{qu1} Does every group of order $\leq 9$ except
$D_4$ and $D_3$ occur as
topological fundamental group (not only as algebraic fundamental group)?
\end{question} The answer to question \ref{qu1} is completely open
for $\ZZ_4$;  for
$\ZZ_6$ one suspects that this fundamental group is
realized by the  Neves-Papadakis surfaces.

Note that the existence of the case where $\pi_1(S) = \ZZ_7$ is shown
in the paper \cite{cvg} (where the
result is not mentioned in the introduction).

\medskip
\noindent
\underline{$K_S^2 = 3$}: here there were two examples of non trivial fundamental groups,
the first one due to Burniat and  Inoue,  the second one to Keum and Naie (\cite{burniat},
\cite{inoue}, \cite{keum}, \cite{naie94}).

It is conjectured that for $p_g(S)= 0, K_S^2 = 3$
the algebraic
fundamental group is finite, and one can ask as in 1) above whether
also $\pi_1(S)$ is
finite. Park, Park and Shin (\cite{pps3}) showed the existence of
simply connected
surfaces, and of surfaces with
torsion $\ZZ_2$ (\cite{pps3b}). More recently  Keum and Lee (\cite{kl}) 
constructed an example with $\pi_1(S) = \ZZ_2$.

Other constructions  were given in \cite{sbc},  together with two more examples
    with $p_g(S)=0, K^2 = 4,5$: these turned out however to be the same
as the Burniat surfaces.

In \cite{bp}, the existence of four new fundamental groups is shown.
Then new fundamental groups were shown to occur by Cartright  
and Steger,
while considering quotients of a fake projective plane by an  
automorphism of order 3.

With this method Cartright  
and Steger produced also other examples with $p_g(S)= 0$, $K_S^2 = 3$,
and trivial fundamental group, or with $\pi_1(S) = \ZZ_2$.

\medskip
\noindent
\underline{$K_S^2 = 4$}: there were known up to now three examples of
fundamental groups, the trivial one
(Park, Park and Shin, \cite{pps4}), a finite one, and an infinite
one. In \cite{4names}, \cite{bp} the existence
of 10 new groups, 6 finite and 4 infinite, is shown: thus minimal
surfaces with $K_S^2 = 4$,
$p_g(S) = q(S)=0$ realize at least 13 distinct topological types. Recently, H. Park constructed one more example in \cite{park} raising the number of topological types to $14$.

\medskip
\noindent
\underline{$K_S^2 = 5,6,7$}: there was known up to now only one
example of a fundamental group for
$K_S^2 = 5, 7$.

Instead for $K_S^2 = 6$, there are the Inoue-Burniat surfaces and an
example due to V.
Kulikov (cf. \cite{kulikov}), which contains
$\ZZ_3^3$ in its torsion group. Like in the case of primary Burniat surfaces
one can see that the fundamental group of the
Kulikov surface fits into an exact sequence $$1 \rightarrow \ZZ^6
\rightarrow \pi_1 \rightarrow
\ZZ_3^3\rightarrow 1.$$

$K_S^2 = 5$ : in \cite{bp} the existence of 7 new groups, four of
which are finite, is shown: thus minimal surfaces
with $K_S^2 = 5$,
$p_g(S) = q(S)=0$ realize at least 8 distinct topological types.

$K_S^2 = 6$ : in \cite{4names} the existence of 6 new groups, three
of which finite, is shown: thus minimal
surfaces with $K_S^2 = 6$,
$p_g(S) = q(S)=0$ realize at least 7 distinct topological types.

$K_S^2 = 7$ : we shall show elsewhere (\cite{toappear}) that these surfaces,
constructed by Inoue in \cite{inoue}, have a fundamental group
fitting into an exact sequence
$$1 \rightarrow \Pi_3 \times \ZZ^4 \rightarrow \pi_1 \rightarrow
\ZZ_2^3\rightarrow 1.$$ 
This motivates the following further question
(cf. question \ref{a,b}).

\begin{question}  Is it true that fundamental groups of surfaces of
general type with $ q=p_g=0$ are finite for
$ K^2_S \leq 3$, and infinite for $ K^2_S \geq 7$?
\end{question}

\section{Other reasons why surfaces with $p_g=0$ have been of
interest in the last 30 years}

\subsection{Bloch's conjecture} Another important problem concerning
surfaces with $p_g = 0$ is related to
the problem of rational equivalence of $0$-cycles.

Recall that, for a nonsingular projective variety $X$, $A_0^i(X)$ is
the group of rational equivalence classes
of zero cycles of degree $i$.

\begin{conjecture} Let $S$ be a smooth surface with $p_g = 0$. Then the
kernel $T(S)$ of the natural morphism
(the so-called {\em Abel-Jacobi map}) $A_0^0(S) \ra \Alb(S)$ is trivial.
\end{conjecture}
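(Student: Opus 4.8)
The plan is to reformulate the vanishing of $T(S)$ as a \emph{decomposition of the diagonal} in the Chow group of $S\times S$, in the spirit of Bloch and Srinivas, and then to exploit the drastic restrictions on the cohomology of $S$ imposed by $p_g(S)=0$. The case $\kappa(S)\le 1$ is already a theorem of Bloch, Kas and Lieberman, so I would concentrate on the genuinely open case in which $S$ is of general type; there, as recalled above, $p_g(S)=0$ forces $q(S)=0$, hence $\Alb(S)=0$ and $T(S)=A_0^0(S)$, the whole group of degree-zero zero-cycles modulo rational equivalence.

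First I would record the equivalence on which the method rests: $T(S)\otimes\QQ=0$ if and only if there are a positive integer $N$, a point $p\in S$ and a curve $C\subset S$, together with a rational equivalence
\[
  N\,\Delta_S \;=\; N\,(S\times\{p\}) \;+\; \Gamma
  \qquad\text{in }\mathrm{CH}^2(S\times S)\otimes\QQ,
\]
where $\Gamma$ is supported on $C\times S$. Only the easy implication is needed here: letting both sides act as self-correspondences on a class $z\in A_0^0(S)$, the diagonal yields $Nz$, the term $S\times\{p\}$ yields $(\deg z)\,[p]=0$, and $\Gamma$ yields $0$ once $z$ is moved, within its rational equivalence class, off the curve $C$ --- possible because $z$ has codimension $2$ and $C$ codimension $1$ on the smooth surface $S$. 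Hence $Nz=0$, so $T(S)$ is a torsion group; since, by Roitman's theorem, $\ker(A_0^0(S)\to\Alb(S))$ contains no torsion, it follows that $T(S)=0$. Thus the whole task is to produce the displayed decomposition.

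The hypothesis $p_g(S)=0$ produces this decomposition \emph{cohomologically} for free. Indeed $q(S)=0$ gives $H^1(S,\QQ)=H^3(S,\QQ)=0$, while $H^{2,0}(S)=0$ means that the whole of $H^2(S,\QQ)$ is of Hodge--Tate type and, by the Lefschetz theorem on $(1,1)$-classes, is spanned by the classes of finitely many curves $C_1,\dots,C_r$. Writing out the K\"unneth decomposition of $[\Delta_S]\in H^4(S\times S,\QQ)$ one obtains
\[
  [\Delta_S] \;=\; [S\times\{p\}] \;+\; [\{p\}\times S] \;+\; \sum_{i,j} a_{ij}\,[C_i]\times[C_j],
\]
so, choosing $p$ on $C:=C_1\cup\dots\cup C_r$, the class $[\Delta_S]-[S\times\{p\}]$ is represented by an algebraic cycle supported on $C\times S$. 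The cohomological analogue of the decomposition of the diagonal therefore holds unconditionally; what remains is to upgrade it from an identity in $H^4(S\times S,\QQ)$ to an identity in $\mathrm{CH}^2(S\times S)\otimes\QQ$.

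This upgrade is the main obstacle, and by Mumford's theorem it cannot be forced by any formal device: a non-zero $T(S)$ is automatically ``infinite-dimensional'', so there is no room for a deformation or perturbation argument and one must exhibit a genuine geometric reason for the triviality of $A_0^0(S)$. Two routes are available. The first is the finite-dimensionality philosophy of Kimura and O'Sullivan: whenever $S$ is dominated by a product of curves --- as happens for many of the surfaces discussed above, in particular those isogenous to a product, or birationally dominated by one --- the Chow motive of $S$ is finite-dimensional, and finite-dimensionality together with $p_g=q=0$ forces the transcendental part of the motive of $S$ to vanish, which is precisely the statement $T(S)=0$. The second route is ad hoc: one uses the specific construction of $S$ --- a pencil, a base change, or, most often, a faithful action of a finite group relating $S$ to curves or to a surface whose zero-cycles are already understood --- to compute $A_0^0(S)$ directly, as has been carried out for Godeaux surfaces, Barlow's surfaces, and several of the other classical examples. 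For surfaces of general type with $p_g=0$ whose construction does not exhibit such structure the conjecture remains open, and in any new case I would expect the decisive difficulty to lie precisely in bridging cohomological and rational equivalence.
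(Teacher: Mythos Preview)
The statement you were asked to prove is labelled a \emph{conjecture} in the paper, and the paper does not prove it; there is no ``paper's own proof'' to compare against. What the paper does is exactly what you do: record that the case $\kappa(S)<2$ is settled by Bloch--Kas--Lieberman, and then explain that for surfaces of general type with $p_g=q=0$ the conjecture follows from Kimura's finite-dimensionality whenever the Chow motive of $S$ is known to be finite-dimensional, in particular whenever $S$ is (birationally) dominated by a product of curves. The paper then draws the concrete corollary that Bloch's conjecture holds for all the product-quotient surfaces appearing in its tables.

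Your write-up is thus not a proof of the conjecture --- and you say so yourself in the last sentence --- but a correct and well-organised account of the available strategy: the Bloch--Srinivas decomposition of the diagonal, the fact that $p_g=q=0$ gives the decomposition cohomologically for free, and the honest identification of the gap, namely lifting the identity from $H^4(S\times S,\QQ)$ to $\mathrm{CH}^2(S\times S)\otimes\QQ$. This matches the paper's discussion and goes a bit beyond it by making the diagonal decomposition explicit. There is no error to flag; just be aware that what you have produced is a survey of the approach, not a proof, and that the paper makes no stronger claim.
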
 By a beautiful result of D. Mumford
(\cite{mumfordinfinite}), the kernel of the Abel-Jacobi map is
infinite dimensional for surfaces $S$ with $p_g \neq 0$.

The conjecture has been proven for $\kappa(S) <2$ by Bloch, Kas and
Liebermann (cf. \cite{bkl}). If instead
$S$ is of general type, then $q(S) = 0$, whence Bloch's conjecture
asserts for those surfaces that $A_0(S)
\cong \ZZ$.

Inspite of the efforts of many authors, there are only few cases of
surfaces of general type for which Bloch's
conjecture has been verified (cf. e.g. \cite{inosemik},
\cite{barlowbloch}, \cite{keum}, \cite{voisin}).

Recently S. Kimura introduced the following notion of {\em finite
dimensionality} of motives (\cite{kimura}).
\begin{definition}
   Let $M$ be a motive.

   Then $M$ is {\em evenly finite dimensional} if there is a natural
number $n \geq 1 $ such that $\wedge ^n
M = 0$.

   $M$ is {\em oddly finite dimensional} if there is a natural number
$n \geq 1 $ such that $\Sym ^n M = 0$.

   And, finally, $M$ is {\em finite dimensional} if $M = M^+ \oplus
M^-$, where $M^+$ is evenly finite
dimensional and $M^-$ is oddly finite dimensional.
\end{definition}

Using this notation, he proves the following
\begin{theorem}\label{kimura} 1) The motive of a smooth projective curve
is finite dimensional (\cite{kimura}, cor. 4.4.).

2) The product of finite dimensional motives is finite dimensional
(loc. cit., cor. 5.11.).

3) Let $f \colon M \ra N$ be a surjective morphism of motives, and
assume that $M$ is finite dimensional.
Then $N$ is finite dimensional (loc. cit., prop. 6.9.).

4) Let $S$ be a surface with $p_g = 0$ and suppose that the Chow
motive of $X$ is finite dimensional. Then
$T(S) = 0$ (loc.cit., cor. 7.7.).
\end{theorem}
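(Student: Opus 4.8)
The four assertions, though logically rather independent, all rest on the formalism of Schur functors in a $\QQ$-linear pseudo-abelian symmetric tensor category, and the plan is to set that up first. For a partition $\lambda$ of $n$ the Young symmetrizer in $\QQ[\mathfrak S_n]$ is, up to a rational scalar, an idempotent, so acting on $M^{\otimes n}$ it cuts out a functorial direct summand $S^\lambda M$; in particular $\wedge^n M=S^{(1^n)}M$ and $\Sym^n M=S^{(n)}M$. Two formal inputs are needed: (a) the Cauchy-type identities $\Sym^n(M\otimes N)=\bigoplus_{\lambda\vdash n}S^\lambda M\otimes S^\lambda N$ and $\wedge^n(M\otimes N)=\bigoplus_{\lambda\vdash n}S^\lambda M\otimes S^{\lambda'}N$ (with $\lambda'$ the conjugate partition), together with $\wedge^n(M\oplus N)=\bigoplus_{i+j=n}\wedge^i M\otimes\wedge^j N$; and (b) the vanishing lemma: if $\wedge^{a+1}M=0$ then $S^\lambda M=0$ for every $\lambda$ with more than $a$ rows, and dually if $\Sym^{b+1}M=0$ then $S^\lambda M=0$ for every $\lambda$ with more than $b$ columns. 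Both hold in any such category, being identities between representations of symmetric groups.

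Granting this, for \textbf{(1)} I would use the motivic structure of curves: a smooth projective curve $C$ of genus $g$ admits the Chow--K\"unneth decomposition $h(C)=\mathbf 1\oplus h^1(C)\oplus \LL$, there is a canonical isomorphism $h^1(C)\cong h^1(\Alb(C))$, and by Shermenev, Deninger--Murre and K\"unnemann the motive of an abelian variety $A$ is the exterior algebra on $h^1(A)$; since $\dim\Alb(C)=g$ this yields $\Sym^{2g+1}h^1(C)=0$, so $h^1(C)$ is oddly finite dimensional, while $\mathbf 1$ and $\LL$ are visibly evenly finite dimensional, and the grouping $h(C)=(\mathbf 1\oplus\LL)\oplus h^1(C)$ exhibits $h(C)$ as finite dimensional. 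For \textbf{(2)} I would simply run the bookkeeping: writing $M=M^+\oplus M^-$ and $N=N^+\oplus N^-$, the four pieces $M^{\pm}\otimes N^{\pm}$ are, by (a) and (b), evenly finite dimensional when the signs agree and oddly finite dimensional when they differ --- for instance if $\wedge^{a+1}M^+=0$ and $\Sym^{b+1}N^-=0$, then in $\Sym^n(M^+\otimes N^-)=\bigoplus_\lambda S^\lambda M^+\otimes S^\lambda N^-$ a surviving $\lambda$ has at most $a$ rows and at most $b$ columns, so $n\le ab$ --- and $M\otimes N$ is the sum of these four motives.

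For \textbf{(3)}, one half is immediate: $S^\lambda$ carries a direct summand to a direct summand, so $\wedge^n N$ is a summand of $\wedge^n M$, and hence a summand of an evenly (resp. oddly) finite dimensional motive is again one. The substantial point, which I expect to be the crux of the entire theorem, is to produce a splitting $N=N^+\oplus N^-$ when $N$ is only assumed to be a direct summand of $M=M^+\oplus M^-$: representing the associated idempotent of $M$ as a block matrix $\left(\begin{smallmatrix}a&b\\ c&d\end{smallmatrix}\right)$ with $b\in\Hom(M^-,M^+)$ and $c\in\Hom(M^+,M^-)$, the plan is to show that $bc$ and $cb$ are nilpotent and then conjugate the idempotent into block-diagonal form, exhibiting $N$ as the sum of a summand of $M^+$ and a summand of $M^-$. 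The needed fact --- that a morphism from an evenly finite dimensional motive to an oddly finite dimensional one (or conversely) is nilpotent after one round trip --- is the genuinely hard lemma; it is proved by feeding the graph of such a morphism into the Schur-functor vanishing (b), and it is the same mechanism that yields the Kimura--O'Sullivan nilpotence theorem.

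Finally, for \textbf{(4)}, by Murre's theorem a surface $S$ carries a Chow--K\"unneth decomposition $h(S)=\mathbf 1\oplus h^1(S)\oplus h^2(S)\oplus h^3(S)\oplus \LL^{\otimes 2}$, and by Kahn--Murre--Pedrini $h^2(S)$ refines into an algebraic part isomorphic to $\LL^{\oplus\rho}$ (the N\'eron--Severi classes) plus a transcendental motive $t_2(S)$ whose $0$-cycle Chow group is $T(S)\otimes\QQ$ and whose realization is the transcendental subspace $H^2_{\mathrm{tr}}(S)\subseteq H^2(S,\QQ)$. The hypothesis $p_g(S)=0$ forces $H^0(S,\Omega^2_S)=0$, hence $H^2(S)$ is of pure type $(1,1)$ and, by the Lefschetz $(1,1)$ theorem, entirely algebraic, so $H^2_{\mathrm{tr}}(S)=0$ and $t_2(S)$ is numerically trivial (its realization being zero). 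If $h(S)$ is finite dimensional then so is its summand $t_2(S)$ by part (3); but on a finite dimensional motive the ideal of numerically trivial correspondences acts nilpotently (again a consequence of the Schur-functor nilpotence of part (3)), so the identity of $t_2(S)$ is a nilpotent idempotent, forcing $t_2(S)=0$ and therefore $T(S)\otimes\QQ=0$. Since $T(S)$ is by definition the kernel of the Albanese map, Roitman's theorem on torsion $0$-cycles shows $T(S)$ is torsion free, and a torsion free abelian group killed by $\otimes\,\QQ$ vanishes; hence $T(S)=0$.
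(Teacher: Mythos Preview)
The paper does not prove this theorem: it is stated purely as a citation of results from Kimura's article (with precise pointers to cor.~4.4, cor.~5.11, prop.~6.9 and cor.~7.7 there), and is used only as a black box in the next theorem on product--quotient surfaces. So there is no ``paper's own proof'' to compare against; your write-up goes well beyond what the paper attempts.

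As a sketch of Kimura's arguments your outline is essentially correct and follows the standard route. One small slip in part (1): you say the motive of an abelian variety is the \emph{exterior} algebra on $h^1(A)$ and then conclude $\Sym^{2g+1}h^1(C)=0$. In Kimura's (ungraded) conventions it is $\Sym^i h^1(A)$ that gives $h^i(A)$, so the vanishing of $h^{2g+1}(A)$ yields $\Sym^{2g+1}h^1(A)=0$ directly; with an exterior-algebra statement you would instead get $\wedge^{2g+1}=0$, which is the wrong parity. This is only a convention mix-up and does not affect the logic. In part (4) your argument via the Kahn--Murre--Pedrini transcendental motive $t_2(S)$ is the clean modern formulation; Kimura's original proof predates that refinement and works with Murre's $h^2(S)$ together with his nilpotence result for numerically trivial correspondences, but the substance is the same, and your appeal to Roitman to pass from $T(S)\otimes\QQ=0$ to $T(S)=0$ is exactly what is needed.
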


Using the above results we  obtain
\begin{theorem}
   Let $S$ be the minimal model of a product-quotient surface (i.e.,
birational to $(C_1 \times C_2)/G$, where
$G$ is a finite group acting effectively on a product of two compact
Riemann surfaces of respective genera
$g_i \geq 2$) with $p_g = 0$.

    Then Bloch's conjecture holds for $S$, namely, $A_0(S) \cong \ZZ$.
\end{theorem}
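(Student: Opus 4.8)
The plan is to deduce the theorem from Kimura's criterion (Theorem~\ref{kimura}, part 4)) by showing that the Chow motive of $S$ is finite dimensional. Since $S$ is a \emph{minimal model} of a product-quotient surface, the first reduction is to observe that finite dimensionality of the motive is a birational invariant for smooth projective surfaces: blowing up a point replaces the motive by a direct sum with copies of Lefschetz motives $\LL$, which are (evenly) finite dimensional, and conversely by part 3) a surjection onto $h(S)$ from a finite dimensional motive forces $h(S)$ finite dimensional. Hence it suffices to prove that the motive of \emph{some} smooth projective surface $X$ birational to $(C_1\times C_2)/G$ is finite dimensional; concretely one takes $X$ to be a minimal resolution of the (at worst quotient, hence rational double point after suitable blow-ups, or more generally cyclic quotient) singularities of $Y:=(C_1\times C_2)/G$.

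The heart of the argument is then: the motive of $C_1\times C_2$ is finite dimensional. This is immediate from Theorem~\ref{kimura}: by part 1) each $h(C_i)$ is finite dimensional, and by part 2) the product $h(C_1)\otimes h(C_2)=h(C_1\times C_2)$ is finite dimensional. Next I would pass to the quotient: the rational Chow motive of $Y=(C_1\times C_2)/G$ is the $G$-invariant part of $h(C_1\times C_2)$, i.e.\ the image of the projector $\frac{1}{|G|}\sum_{g\in G} g$ acting on $h(C_1\times C_2)$ (here we use $\QQ$-coefficients, which is harmless since $T(S)$ is a $\QQ$-vector space up to the torsion issues that vanish for $A_0$ of a surface with the relevant vanishing — more precisely Kimura's conclusion $T(S)=0$ is what we want and it is insensitive to this). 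A direct summand of a finite dimensional motive is finite dimensional (again by part 3), applied to the projector as a surjection onto its image), so $h(Y)$ is finite dimensional. Finally, the resolution $X\to Y$ only introduces exceptional curves, so $h(X)$ differs from $h(Y)$ by summands that are motives of curves and Lefschetz motives, all finite dimensional; invoking parts 1), 2) and 3) once more gives that $h(X)$ is finite dimensional.

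Having established that $h(X)$ — hence $h(S)$ for the minimal model $S$ — is finite dimensional, and given that $p_g(S)=0$ by hypothesis, part 4) of Theorem~\ref{kimura} yields $T(S)=0$. Since $S$ is of general type we have $q(S)=0$, so $\Alb(S)=0$ and the Abel--Jacobi map $A_0^0(S)\to\Alb(S)$ is the zero map with trivial kernel; combined with $T(S)=0$ this gives $A_0^0(S)=0$, and therefore $A_0(S)=\bigoplus_i A_0^i(S)\cong\ZZ$, generated by the class of a point. This is exactly Bloch's conjecture for $S$.

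The step I expect to be the main obstacle is the descent to the quotient and the resolution, i.e.\ making precise that $h\big((C_1\times C_2)/G\big)$ is the invariant submotive and that resolving its singularities does not destroy finite dimensionality. The invariant-part statement requires working in the category of Chow motives with $\QQ$-coefficients and knowing that $(C_1\times C_2)/G$ together with its quotient map defines the correspondence $\frac1{|G|}\sum g$ as an idempotent; the resolution step requires a clean description of how the motive changes under blow-up of the (cyclic quotient) singular points, which is classical but must be cited carefully. Everything else is a formal consequence of the three stated closure properties of finite dimensional motives.
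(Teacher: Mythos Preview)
Your proof is correct and follows essentially the same route as the paper: both arguments deduce finite dimensionality of the relevant motive from Kimura's parts 1)--3) applied to the product $C_1\times C_2$ and the quotient, and then invoke part 4) together with $q(S)=0$ to conclude $A_0^0(S)=T(S)=0$.

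The only difference is in how the passage from the singular quotient to the smooth minimal model is handled. You work with a resolution and argue via birational invariance of finite dimensionality (blow-ups add Lefschetz summands), carefully tracking the motive through the resolution. The paper instead invokes directly that $X=(C_1\times C_2)/G$ has only rational (indeed quotient) singularities, so that $T(X)=T(S)$; it then applies parts 2), 3) to get the motive of $X$ finite dimensional and concludes immediately. Your version is more explicit about the technical point you yourself flag --- the descent to the invariant submotive and the effect of resolving --- whereas the paper's version absorbs this into the single observation about rational singularities. Both are valid; the paper's is shorter, yours is more self-contained.
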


\begin{proof}
Let $S$ be the minimal model of $X = (C_1 \times C_2)/G$. Since $X$
has rational singularities $T(X) = T(S)$.

By thm. \ref{kimura}, 2), 3) we have that the motive of $X$ is
finite dimensional, whence, by 4),  $T(S) = T(X) = 0$.

Since $S$ is of general type we have also $ q(S)=0$, hence $A_0^0(S)
= T(S) = 0$.
\end{proof}

\begin{corollary}
All the surfaces in table \ref{K2>4}, \ref{K2<3}, and all the
surfaces in \cite{bacat}, \cite{bcg} satisfy Bloch's
conjecture.
\end{corollary}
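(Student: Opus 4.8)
The plan is to reduce the corollary to the theorem just proved, by checking that every surface mentioned fits the hypothesis of being the minimal model of a product-quotient surface with $p_g=0$. First I would recall the structure of the tables \ref{K2>4} and \ref{K2<3}: by their very construction (as described in the relevant sections of \cite{bacat}, \cite{bcg} and the present survey), each entry is obtained as the minimal resolution, followed by contraction of $(-1)$-curves, of a quotient $(C_1\times C_2)/G$ where $G$ is a finite group acting effectively and holomorphically on a product of two curves of genera $g_i\ge 2$. So the real content is bookkeeping: one goes through the list and verifies that the genera are at least $2$ (ruling out the isotrivial-fibration-with-rational-or-elliptic-fibre degenerate cases) and that $p_g=0$, which is exactly the defining property of the tables. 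Once this is done, the theorem applies verbatim to each surface and yields $A_0(S)\cong\ZZ$.

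Next I would treat the surfaces of \cite{bacat} and \cite{bcg} in the same spirit. The surfaces in \cite{bcg} are precisely the surfaces isogenous to a product with $p_g=0$ (the $K^2=8$ case discussed in the $\underline{K_S^2=8}$ paragraph above), so here $X=(C_1\times C_2)/G$ is already smooth and minimal, $G$ acts freely, and $g_i\ge 2$ automatically since $X$ is of general type; the theorem applies with $S=X$. The surfaces in \cite{bacat} are again product-quotient surfaces by construction, so the same argument runs. In all cases the key input is Theorem \ref{kimura}, parts 2) and 3): the motive of $C_1\times C_2$ is finite dimensional, hence so is the motive of the quotient $X$ (a surjective image), hence $T(X)=0$ by part 4), and $T$ is a birational invariant among surfaces with rational singularities, so $T(S)=0$; finally $q(S)=0$ for surfaces of general type with $p_g=0$, giving $A_0^0(S)=T(S)=0$ and thus $A_0(S)\cong\ZZ$.

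The main obstacle is not mathematical depth but completeness of the verification: one must be certain that \emph{every} surface listed in the tables and in the cited papers genuinely arises from an action on a product of curves of genus $\ge 2$, with no stray example that is only birational to a quotient by an infinite group, or that involves a genus-one or rational factor, or that is constructed by a different method (e.g.\ as an abelian cover) and merely happens to appear in the same table. I would therefore organize the proof as a short paragraph invoking the theorem, preceded by an explicit remark that the tables \ref{K2>4} and \ref{K2<3} consist, by construction, exactly of minimal models of product-quotient surfaces with $p_g=0$, so that the hypothesis is met entry by entry; the cases of \cite{bacat} and \cite{bcg} being handled by the observation that ``isogenous to a product'' is the special case where $X$ itself is smooth and minimal.

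\begin{proof}
Every surface appearing in Tables \ref{K2>4} and \ref{K2<3}, as well as every surface constructed in \cite{bacat} and \cite{bcg}, is by construction the minimal model of a surface $X=(C_1\times C_2)/G$ with $G$ a finite group acting effectively on a product of two compact Riemann surfaces of genera $g_i\ge 2$, and with $p_g=0$; in the case of \cite{bcg} the action is moreover free, so that $X$ itself is smooth, minimal and of general type. In each case the hypothesis of the preceding theorem is satisfied, and we conclude that $A_0(S)\cong\ZZ$.
\end{proof}
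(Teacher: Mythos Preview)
Your proposal is correct and follows exactly the approach implicit in the paper: the corollary is stated without separate proof because every surface in the tables and in \cite{bacat}, \cite{bcg} is, by construction, the minimal model of a product-quotient surface with $p_g=0$, so the preceding theorem applies directly. Your recapitulation of the Kimura argument is redundant (it belongs to the proof of the theorem, not the corollary), but the one-line proof you give at the end is precisely what the paper intends.
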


\subsection{Pluricanonical maps}

A further motivation for the study of surfaces with $p_g=0$ comes
from the behavior of the pluricanonical
maps of surfaces of general type.

\begin{definition} The $n$-th pluricanonical map $$\varphi_{n} : =
\varphi_{|nK_S|} \colon S \dashrightarrow
\PP^{P_n-1}$$ is the rational map associated to $H^0(\hol_S(nK_S))$.
\end{definition}

We recall that for a curve of general type $\varphi_{n}$ is an
embedding as soon as $n\geq 3$,  and also for
$n=2$, if the curve is not of genus $2$. The situation in dimension
$2$ is much more complicated. We recall:

\begin{definition} The canonical model of a surface of general type is the
normal surface
$$X : = Proj ( \bigoplus_{n=0}^{\infty} H^0(\hol_S(nK_S))),$$
the projective spectrum of the (finitely generated) canonical ring.

$X$ is obtained from its
minimal model $S$ by  contracting all the curves $C$ with $ K_S \cdot
C = 0$, i.e.,
all the smooth rational curves with self
intersection equal to
$-2$.
\end{definition}

The  $n$-th pluricanonical map $\varphi_{|nK_S|}$ of a surface of
general type  is the composition of the
projection onto its canonical model $X$  with $\psi_n : =
\varphi_{|nK_X|}$. So it suffices to study this last
map.

This was done by Bombieri, whose results were later improved by  the
work of several authors. We summarize
these efforts in the following theorem.

\begin{theorem}[\cite{bom}, \cite{miyaokagod}, \cite{bombcat}, \cite{plurican},
\cite{reider}, \cite{francia},\\ \cite{cc},
\cite{4authors}]
\label{bombieri}

   Let $X$ be the canonical model of a surface of general type. Then
\begin{itemize}
\item[i)] $\varphi_{|nK_X|}$ is an embedding for all $n \geq 5$;
\item[ii)]$\varphi_{|4K_X|}$ is an embedding if $K_X^2 \geq 2$;
\item[iii)] $\varphi_{|3K_X|}$ is a morphism if $K_X^2 \geq 2$ and
              an embedding if $K_X^2 \geq 3$;
\item[iv)] $\varphi_{|nK_X|}$ is birational for all $n \geq 3$ unless
      \subitem a) either $K^2=1$, $p_g=2$, $n=3$ or $4$.

In this case
                $X$ is a hypersurface of degree 10 in the weighted
projective space
$\PP(1,1,2,5)$, a finite double cover of the quadric cone $Y: = \PP(1,1,2)$,
    $\varphi_{|3K_X|}(X)$ is birational to $Y$ and isomorphic to an embedding
of the surface
$\FF_2$ in
                  $\PP^5$, while  $\varphi_{|4K_X|}(X)$ is an 
embedding of $Y$ in
                  $\PP^8$.
      \subitem b) Or $K^2=2$, $p_g=3$, $n=3$ (in this case $X$ is a
double cover of $\PP^2$
branched on a curve of degree 8, and
                  $\varphi_{|3K_X|}(X)$ is the image of the Veronese embedding
                  $\nu_3: \PP^2 \ra\PP^9$).
\item[v)] $\varphi_{|2K_X|}$ is a morphism if $K_X^2 \geq 5$ or if
$p_g \neq 0$.
\item[vi)] If $K_X^2 \geq 10$ then $\varphi_{|2K_X|}$ is birational
if and only if $X$ does not admit a
morphism onto a  curve with general fibre of genus $2$.
\end{itemize}
\end{theorem}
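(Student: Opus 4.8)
The plan is to reduce every assertion to the canonical model $X$, on which $K_X$ is ample: since $\varphi_{|nK_S|}$ is the composite of the contraction $\pi\colon S\to X$ (which collapses exactly the $(-2)$-curves and nothing else) with $\psi_n=\varphi_{|nK_X|}$, it suffices to study $\psi_n$, the statement ``$\varphi_{|nK_X|}$ is an embedding'' referring throughout to $X$ rather than $S$. The two engines are Riemann--Roch together with vanishing, to control the dimension $P_n$ of the systems, and Reider's theorem (equivalently, Bogomolov instability of rank-two bundles) applied on the minimal model $S$, to control base points and separation of points and tangent vectors.

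First I would note that $(n-1)K_X$ is ample for $n\ge 2$, so Kodaira vanishing gives $h^i(X,\hol_X(nK_X))=0$ for $i>0$, whence $P_n=\chi(\hol_S)+\binom n2 K^2$ for all $n\ge 2$. This pins down the target $\PP^{P_n-1}$ and, in the low cases, forces the weighted projective ambient spaces of (iv): for $K^2=1,\ p_g=2$ one finds generators of the canonical ring in degrees $1,1,2,5$ with a single relation in degree $10$, so $X$ is a degree-$10$ hypersurface in $\PP(1,1,2,5)$, a double cover of the cone $\PP(1,1,2)$; for $K^2=2,\ p_g=3$ one gets a double cover of $\PP^2$ branched on an octic.

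For parts (i), (ii), (iii), (v) I would write $nK_S=K_S+L$ with $L=(n-1)K_S$ nef and big, $L^2=(n-1)^2K^2$, and apply Reider: a base point of $|K_S+L|$ forces an effective $E$ through it with $L\cdot E\le 1$ and $E^2\in\{-1,0\}$ when $L^2\ge 5$, while failure to separate two points forces such an $E$ with $L\cdot E\le 2$ and small $E^2$ when $L^2\ge 10$. On $S$ one has $L\cdot E=(n-1)\,K_S\cdot E$, which is either $0$ --- forcing $E$ to be supported on $(-2)$-curves, hence killed by $\pi$ --- or at least $n-1$; feeding this into the numerical thresholds and eliminating the few surviving configurations with the adjunction formula $2p_a(E)-2=E^2+K_S\cdot E$ (which excludes half-integer possibilities such as $K_S\cdot E=1,\ E^2=0$) settles (i) outright, (ii) and (iii) for $K^2\ge 2$ (resp.\ $\ge 3$ for very ampleness), and (v) for $K^2\ge 5$. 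For (vi) the same analysis with $L=K_S$, $L^2=K^2\ge 10$, leaves precisely one surviving obstruction, $K_S\cdot E=2,\ E^2=0$, which by adjunction is exactly a genus-two curve moving in a pencil, i.e.\ a fibre of a genus-two fibration --- and conversely the bicanonical map visibly factors $2{:}1$ through any such fibration --- giving the stated equivalence. The clause of (v) for $p_g\ne 0$ with small $K^2$ I would obtain from Bombieri's direct argument restricting to a canonical curve.

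The genuinely hard part is the residual low-degree, low-$K^2$ analysis in (iv), and the failure of (iii) when $K^2=1$, where Reider's hypotheses are not available. Here I expect to fall back on the explicit models found above: for $K^2=1,\ p_g=2$ one computes $\varphi_{|3K|}$ and $\varphi_{|4K|}$ by hand on the hypersurface in $\PP(1,1,2,5)$, recovering the image $\FF_2\subset\PP^5$ and the $2{:}1$ map onto $Y\subset\PP^8$; for $K^2=2,\ p_g=3$ one identifies $\varphi_{|3K|}$ with the octic double cover followed by the Veronese $\nu_3\colon\PP^2\to\PP^9$; and one checks directly that in every other sub-case with $K^2\le 2$ --- in particular whenever $p_g=0$ --- the map $\varphi_{|3K|}$ is already birational. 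This case-by-case bookkeeping on the canonical ring, which is essentially Bombieri's original fine study as sharpened in the cited works of Catanese--Ciliberto, Francia, Reider and the others, is where essentially all the real effort lies; the rest is routine once Reider's theorem and Riemann--Roch are in hand.
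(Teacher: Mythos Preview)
The paper does not prove this theorem at all: it is a survey, and Theorem~\ref{bombieri} is stated purely as a compilation of results from the cited literature (\cite{bom}, \cite{miyaokagod}, \cite{bombcat}, \cite{plurican}, \cite{reider}, \cite{francia}, \cite{cc}, \cite{4authors}), with no argument given beyond a brief historical remark that the $p_g=0$ cases were the last to be settled. There is therefore no ``paper's own proof'' to compare against.

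Your sketch is a reasonable modern outline of how the cited results are obtained, and in particular your use of Reider's theorem for (i)--(iii), (v), (vi) is the standard post-1988 route that supersedes much of Bombieri's original connectedness machinery. A couple of small points: your parity argument excluding $K_S\cdot E=1$, $E^2=0$ is fine, but you should also explicitly dispose of the $(-2)$-configurations in the separation-of-points step (not just the base-point step), since Reider produces curves through \emph{pairs} of points and one must check that the only surviving $E$'s with $K_S\cdot E=0$ are contracted by $\pi$; and in (v) the clause ``$p_g\ne 0$'' with $K^2\le 4$ genuinely does require the restriction-to-a-canonical-curve argument you allude to, since Reider's numerical threshold $L^2\ge 5$ fails there. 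The low-$K^2$ analysis in (iv), as you correctly identify, is where the real content lies and is spread across several of the cited papers rather than following from any single method.
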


The surfaces with $p_g=0$ arose as the difficult case for the
understanding of the tricanonical map, because,
in the first  version of his  theorem, Bombieri could not determine
whether the  tricanonical and
quadricanonical map of the numerical Godeaux and of the numerical
Campedelli surfaces had to be birational.
This was later proved  in
\cite{miyaokagod},  in \cite{bombcat}, and in \cite{plurican}.

It was already known to Kodaira that a morphism onto a smooth curve
with general fibre of genus $2$ forces
the bicanonical map to factor  through the hyperelliptic involution
of the fibres: this is called the {\it
standard  case} for the nonbirationality of the bicanonical map. Part
vi) of  Theorem \ref{bombieri} shows
that there are finitely many families  of surfaces of general type
with bicanonical map nonbirational which
do not present the standard case.  These interesting families have
been  classified under the hypothesis
$p_g>1$ or $p_g=1$, $q \neq 1$: see \cite{oursurvey} for a more
precise  account on this results.

Again, the surfaces with $p_g=0$ are the most difficult and hence
the most interesting, since there are
"pathologies" which can happen only for surfaces with $p_g=0$.

For example, the bicanonical system of a numerical Godeaux surface is
a pencil, and  therefore maps the
surface onto $\PP^1$, while \cite{XiaoBican} showed  that the bicanonical
map of every other surface of
general type has a two dimensional image.  Moreover, obviously for a
numerical Godeaux
surface $\varphi_{|2K_X|}$ is
not a morphism, thus showing that the condition
$p_g \neq 0$ in the point v) of the Theorem \ref{bombieri} is sharp.

Recently, Pardini and Mendes Lopes (cf. \cite{mlp}) showed that there
are more examples of surfaces whose
bicanonical map is not a morphism,  constructing two families of
numerical Campedelli surfaces whose
bicanonical system has two base points.

What it is known on the degree of the bicanonical map of  surfaces
with $p_g = 0$ can be summarized in the
following
\begin{theorem}[\cite{MPdegree},\cite{MPsurvey}, \cite{mlp}] Let $S$ be
a surface with $p_g=q=0$. Then
\begin{itemize}
\item if $K_S^2=9 \Rightarrow \deg \varphi_{|2K_S|}=1$,
\item if $K_S^2=7,8 \Rightarrow \deg \varphi_{|2K_S|}=1$ or $2$,
\item if $K_S^2=5,6 \Rightarrow \deg \varphi_{|2K_S|}=1$, $2$ or $4$,
\item if $K_S^2=3,4 \Rightarrow \deg \varphi_{|2K_S|}\leq 5$; if moreover
     $\varphi_{|2K_S|}$ is a morphism, then $\deg \varphi_{|2K_S|} =1$,
$2$ or $4$,
\item if $K_S^2=2$ (since the image of the bicanonical map is
$\PP^2$, the bicanonical map is non birational),
then $\deg \varphi_{|2K_S|}\leq 8$. In the known examples it has
degree $6$ (and the bicanonical system has
two base points) or $8$ (and the bicanonical system has no base points).
\end{itemize}
\end{theorem}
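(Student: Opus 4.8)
The plan is to work throughout with the canonical model $X$ of $S$ and the rational map $\psi:=\varphi_{|2K_X|}$, whose degree onto its image coincides with $\deg\varphi_{|2K_S|}$ whenever the latter is generically finite, since $\varphi_{|2K_S|}$ factors through the contraction $S\to X$. First I would record the numerics: by Riemann--Roch together with Mumford--Ramanujam vanishing (valid as $K_X$ is nef and big) one gets $h^0(\hol_X(2K_X))=\chi(\hol_X)+K_X^2=K_X^2+1$, so the image $\Sigma:=\overline{\psi(X)}$ is a nondegenerate subvariety of $\PP^{K_X^2}$. If $\dim\Sigma=1$ then $\psi$ is composed with a pencil; the analysis of the bicanonical pencil (\cite{XiaoBican}) shows this occurs only for $K_S^2=1$, while for $K_S^2=2$ one has $h^0(\hol_X(2K_X))=3$ and $\Sigma=\PP^2$, so there the bicanonical map is automatically non-birational. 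From now on $\Sigma$ is a surface; recalling from Theorem~\ref{bombieri}(v) that $\psi$ is a morphism (and $|2K_X|$ base-point free) as soon as $K_S^2\ge 5$, I would, in the range $K_S^2=3,4$, add the hypothesis that $\psi$ is a morphism, as in the statement.

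The next step is the crude degree bound. When $|2K_X|$ is base-point free and $\psi$ is generically finite of degree $d$ onto the surface $\Sigma$, the projection formula gives $4K_X^2=(2K_X)^2=d\cdot\deg\Sigma$; since a nondegenerate surface in $\PP^{K^2}$ has degree at least $K^2-1$, this forces $d\le 4K^2/(K^2-1)$, i.e.\ $d\le 4$ for $K_S^2\ge 6$, $d\le 5$ for $K_S^2=4,5$, and $d\le 6$ for $K_S^2=3$. When base points are present (possible only for $K_S^2\le 4$) one replaces the equality by $4K_X^2=d\cdot\deg\Sigma+\mathrm{length}(\psi^{*}Z)$, $Z$ the base scheme, and the extra term — at least the number of base points — sharpens the bound to $d\le 5$ for $K_S^2=3,4$ and $d\le 8$ for $K_S^2=2$; combined with $\Sigma=\PP^2$ this already gives the last bullet.

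The real content is excluding the surviving odd degrees, and for large $K_S^2$ also $d=4$ and $d=2$. If $\deg\Sigma$ is minimal, $=K^2-1$, then $\Sigma$ is a rational normal scroll, a cone over a rational normal curve, or (only if $K_S^2=5$) the Veronese surface $\PP^2\subset\PP^5$. In the scroll/cone case, pulling back a ruling line $\ell$ ($\ell^2=0$, $\ell\cdot H=1$) one gets a curve $F=\psi^{*}\ell$ with $F^2=0$ and $F\cdot 2K_X=d$, so $K_X\cdot F=d/2$ must be an integer: $d$ is even, which kills $d=3,5$ here. When $\deg\Sigma=K^2$ — exactly what a base-point-free degree-$4$ map forces — $\Sigma$ has almost minimal degree: it is a del Pezzo surface of degree $K^2$ embedded by $-K_\Sigma=H$ or a degeneration/projection of one, $\psi^{*}(-K_\Sigma)=2K_X$, and the ramification formula gives ramification divisor $R=3K_X$. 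Here I would analyse $\psi_{*}\hol_X=\hol_\Sigma\oplus\mathcal F$ as a sheaf of $\hol_\Sigma$-algebras: when $\Sigma$ is rational, $\chi(\hol_X)=1=\chi(\hol_\Sigma)$ gives $\chi(\mathcal F)=0$, while $\det\mathcal F$ is pinned down by the branch divisor $B\in|{-2}\det\mathcal F|$; feeding this into the structure theory of triple and quadruple covers (Miranda, Tan) together with $K_X$ ample and $p_g=q=0$ yields a contradiction, excluding $d=3$ in general and $d=4$ for $K_S^2\ge 7$. Finally, for $K_S^2=9$ and $d=2$ I would use the bicanonical involution $\sigma$: one shows its quotient admits a minimal resolution $W$ that is rational or Enriques (it cannot be of general type, and a properly elliptic quotient is ruled out by a finer analysis), and then the double-cover formulae $\chi(\hol_X)=2\chi(\hol_W)+\tfrac12 L(L+K_W)$, $K_X^2=2(K_W+L)^2$ show that $K_X^2$ is even, contradicting $K_X^2=9$; hence $\deg\varphi_{|2K_S|}=1$ when $K_S^2=9$.

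The hard part is this last paragraph. The bound of the second step is immediate, but the exclusions of $d=3$, of $d=4$ for $K_S^2\ge 7$, and of $d=2$ for $K_S^2=9$ are not numerical coincidences: they require the fine structure theory of low-degree covers with precise control of the branch locus, and — for $d=2$ — the classification of the possible quotients by the bicanonical involution. This is exactly why the statement is assembled from \cite{MPdegree}, \cite{MPsurvey}, \cite{mlp} rather than proved in a few lines; once those structural inputs are in hand, the remaining bookkeeping for $K_S^2\le 4$ is routine.
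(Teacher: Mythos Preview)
The paper is a survey and does not prove this theorem: it is stated with attribution to \cite{MPdegree}, \cite{MPsurvey}, \cite{mlp} and no argument is given. So there is no proof in the paper to compare your proposal against.

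Your outline is a fair sketch of the strategy actually followed in those references: Riemann--Roch plus vanishing gives $h^0(2K_X)=K_X^2+1$; the crude inequality $4K_X^2=d\cdot\deg\Sigma\ge d(K_X^2-1)$ (with the obvious modification in the presence of base points) yields the first approximation to the degree; and the remaining work is the classification of the image as a surface of minimal or almost minimal degree together with the structure theory of low-degree covers to kill the surviving values of $d$. You are right to flag the exclusions of $d=3$, of $d=4$ for $K_S^2\ge 7$, and of $d=2$ for $K_S^2=9$ as the substantive parts that are imported from the cited papers rather than proved here.

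One point to correct: your parity argument for $K_S^2=9$, $d=2$ does not work as written. The bicanonical involution in this setting has, in addition to a (possibly empty) fixed curve, a certain number $k$ of isolated fixed points; the quotient then has $k$ nodes, and after resolving them the double-cover formulae read $K_S^2-k=2(K_W+L)^2$, which only forces $k$ to be odd and is no contradiction. The actual exclusion in \cite{MPdegree} proceeds by a finer analysis of the quotient (its Kodaira dimension, the number of nodes via the topological Euler number and the holomorphic Lefschetz formula, and constraints on the branch curve), not by a bare parity count. Since you already defer the hard exclusions to the literature, this does not undermine your overall assessment, but the specific mechanism you propose for $K_S^2=9$ should be replaced.
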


\subsection{Differential topology}

The surfaces with $p_g=0$ are very interesting also from the point of
view of differential topology, in
particular in the simply connected case. We recall Freedman's theorem.

\begin{theorem}[\cite{Freedman}] Let $M$ be an oriented, closed, simply
connected topological manifold:
then $M$ is determined (up to homeomorphism) by its intersection form
$$ q \colon H_2(M,\ZZ) \times H_2 (M, \ZZ) \rightarrow \ZZ
$$ and by the Kirby-Siebenmann invariant $\alpha(M) \in \ZZ_2$, which
vanishes if and only if $M \times
S^1$ admits a differentiable structure.
\end{theorem}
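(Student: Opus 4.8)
The plan is to prove the theorem in two halves --- \emph{realization} (every unimodular symmetric bilinear form over $\ZZ$, together with an admissible value of the Kirby--Siebenmann invariant, is realized by some closed oriented simply connected topological $4$-manifold) and \emph{uniqueness} (two such manifolds with isometric intersection forms and equal Kirby--Siebenmann invariant are homeomorphic). The decisive input behind both halves is the topological disk embedding theorem: in a simply connected $4$-manifold, a framed immersed $2$-disk whose self- and mutual intersection numbers vanish algebraically can be replaced by a flat topologically embedded disk with the same boundary --- equivalently, Casson handles are topologically standard. Granting this, both halves reduce to essentially formal surgery-theoretic bookkeeping; it is precisely the failure of the smooth analogue of this statement that makes dimension $4$ exceptional.

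For realization I would begin algebraically: a unimodular symmetric form is carried by a plumbing graph, and plumbing disk bundles over $2$-spheres according to that graph produces a compact simply connected $4$-manifold $W$ with the prescribed intersection form and boundary a homology $3$-sphere $\Sigma$. It then suffices to cap $\Sigma$ off. Here I would invoke the portion of Freedman's theory that yields the $4$-dimensional topological Poincar\'e and Schoenflies phenomena --- all built on the disk embedding theorem --- to conclude that every homology $3$-sphere bounds a contractible topological $4$-manifold; gluing such a cap onto $W$ gives a closed simply connected $M$ with the desired form. The Kirby--Siebenmann invariant is then determined: when the form is even, Rokhlin's theorem forces $\alpha(M) \equiv \sigma(M)/8 \bmod 2$, while when the form is odd one realizes both values of $\alpha$ by connect-summing with the Chern manifold $*\CC\PP^2$ --- the closed topological $4$-manifold homotopy equivalent to $\CC\PP^2$ with $\alpha = 1$, itself produced by the same capping procedure.

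For uniqueness, suppose $M$ and $M'$ have isometric intersection forms and equal $\alpha$. By the Milnor--Whitehead homotopy classification of simply connected closed $4$-manifolds there is a homotopy equivalence $f \colon M \to M'$ inducing the isometry. I would then run topological surgery on $f$: surgery below the middle dimension is unobstructed because $\pi_1 = 1$, and the residual middle-dimensional surgery obstruction is represented by a family of framed immersed $2$-spheres with algebraically cancelling intersections --- exactly the situation the disk embedding theorem resolves, by producing the embedded Whitney disks needed to geometrically cancel them (the agreement of Kirby--Siebenmann invariants is what eliminates the leftover $\ZZ/2$ normal-invariant ambiguity). The outcome is an $h$-cobordism from $M$ to $M'$; simple connectivity makes its Whitehead torsion vanish, so it is an $s$-cobordism, and the topological $s$-cobordism theorem in dimension $5$ --- once more a corollary of the disk embedding theorem, applied to embed the Whitney disks that cancel handles --- produces a homeomorphism $M \cong M'$.

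The main obstacle is, unsurprisingly, the disk embedding theorem that I have been assuming throughout. Its proof is the hard technical core of the subject: starting from a Casson handle one shows it contains a sub-Casson-handle with strictly improved geometric control, refines this into a hierarchy of capped gropes and towers, iterates the reimbedding infinitely often, passes to a limit, and finally runs a Bing-type decomposition-space shrinking argument to identify the limit with the standard open $2$-handle. Everything else above is formal; this decomposition-theoretic construction is where the real work lies, and it is the step with no counterpart in the differentiable category.
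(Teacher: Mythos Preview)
The paper does not prove this theorem at all: it is quoted as a black box with a citation to Freedman's original paper \cite{Freedman}, and is immediately applied to conclude that the oriented homeomorphism type of a simply connected complex surface is determined by its intersection form. So there is no ``paper's own proof'' to compare against.

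What you have written is a competent outline of Freedman's actual argument --- realization via plumbing and capping off homology spheres with contractible $4$-manifolds, uniqueness via simply connected topological surgery and the $5$-dimensional $s$-cobordism theorem, with the disk embedding theorem (Casson handles are standard) serving as the common engine. That is the correct architecture, and you are right to flag the decomposition-space shrinking as the irreducible hard core. But none of this belongs in the present survey: the authors are using Freedman's theorem, not reproving it, and your sketch --- while accurate --- would be wildly out of place here. If the intent was to supply the missing proof the paper relies on, the honest statement is simply that this is a deep theorem whose proof occupies Freedman's entire paper and is not reproduced here.
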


If $M$ is a complex surface, the Kirby-Siebenmann invariant
automatically vanishes and therefore the
oriented homeomorphism type of $M$
    is determined by the intersection form.

Combining it with a basic result of Serre on indefinite unimodular
forms, and since by \cite{yau} the only
simply connected compact complex surface  whose  intersection form is
definite is $\PP^2$  one concludes
\begin{corollary} The oriented homeomorphism type of any simply connected
complex surface is determined by
the rank, the index and the parity of the intersection form.
\end{corollary}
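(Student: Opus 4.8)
The plan is to reduce the statement to the algebraic classification of unimodular symmetric bilinear forms, using Freedman's theorem to pass from the oriented homeomorphism type to the intersection form, and then to split into the definite and the indefinite cases.

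First I would recall that for a closed, oriented, simply connected $4$-manifold $M$ the group $H_2(M,\ZZ)$ is free of finite rank (Poincar\'e duality together with $H_1(M,\ZZ)=0$ forces the torsion to vanish), and the intersection form $q$ is a symmetric \emph{unimodular} bilinear form on it. When $M$ underlies a compact complex surface the Kirby--Siebenmann invariant vanishes, so by Freedman's theorem the oriented homeomorphism type of $M$ is determined by the isomorphism class of the lattice $(H_2(M,\ZZ),q)$. Hence it suffices to show that the isomorphism class of $q$ is determined by its rank $b_2$, its index (signature) $\sigma$, and its parity (even or odd type).

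Next I would dispose of the definite case, which is detected purely numerically by the condition $|\sigma|=b_2$. Since for a compact complex surface one has $b_2^+=2p_g(M)+1\geq 1$, the form $q$ cannot be negative definite, so it is positive definite; by the quoted consequence of Yau's theorem the only simply connected compact complex surface with definite intersection form is $\PP^2$, whose form is $\langle 1\rangle$. Thus in the definite case $M$ is orientedly homeomorphic to $\PP^2$ and the conclusion is immediate (there being a unique such surface up to homeomorphism). In the remaining, indefinite case I would invoke Serre's structure theorem for indefinite unimodular symmetric bilinear forms: an odd such form is isomorphic to $p\langle 1\rangle\oplus n\langle -1\rangle$, and an even one to $lH\oplus k(\pm E_8)$ with $H$ the hyperbolic plane; in either situation $p,n$ (resp. $l,k$ and the sign) are recovered from rank and index, so the form is completely determined by rank, index and type.

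Combining the two cases, any two simply connected compact complex surfaces with the same rank, index and parity of the intersection form have isomorphic intersection lattices, hence are orientedly homeomorphic by Freedman's theorem. The only genuinely geometric input — as opposed to lattice theory and Freedman's theorem — is the exclusion of definite intersection forms other than that of $\PP^2$, i.e. Yau's theorem; since that is already granted to us in the excerpt, the expected main obstacle is not really an obstacle here, and what remains is bookkeeping with the classification of indefinite unimodular lattices.
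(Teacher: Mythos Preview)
Your argument is correct and follows exactly the route the paper indicates: Freedman's theorem reduces the question to the isomorphism class of the intersection lattice, Serre's classification handles the indefinite case via rank, index and parity, and Yau's theorem disposes of the definite case by forcing the surface to be $\PP^2$. The paper states this corollary without a formal proof, merely pointing to the combination of Freedman, Serre and Yau; you have simply spelled out the details (including the observation that $b_2^+\geq 1$ rules out negative definiteness, which is harmless though not strictly needed given Yau's result).
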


This gives a rather easy criterion to decide whether two complex
surfaces are orientedly homeomorphic;
anyway two orientedly homeomorphic complex surfaces are not
necessarily diffeomorphic.

In fact, Dolgachev
surfaces (\cite{dolgachev}, see also \cite[IX.5]{bhpv}) give
examples of infinitely many surfaces which are all
orientedly homeomorphic, but pairwise not diffeomorphic; these are
elliptic surfaces with $p_g=q=0$.

As mentioned, every compact complex  surface homeomorphic to $\PP^2$
is diffeomorphic (in fact,
algebraically isomorphic) to $\PP^2$ (cf. \cite{yau}),  so one can
ask a similar question
(cf. e.g. Hirzebruch's question
\ref{hirzebruch}):  if a surface is
homeomorphic to a rational surface, is it also diffeomorphic to it?

Simply connected surfaces of general type with $p_g=0$ give a
negative answer to this question. Indeed, by
Freedman's theorem each simply connected minimal surface $S$ of
general type with $p_g=0$   is orientedly
homeomorphic to a Del Pezzo surface of degree $K_S^2$. Still these
surfaces are not  diffeomorphic to a Del
Pezzo surface because of the following
\begin{theorem}[\cite{FQ}] Let $S$ be a surface of general type. Then
$S$ is not diffeomorphic to a rational
surface.
\end{theorem}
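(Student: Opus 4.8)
The plan is to exhibit a Seiberg--Witten obstruction. Note first that Betti numbers are homeomorphism invariants, so if a surface $S$ of general type were diffeomorphic to a rational surface then $b_1(S)=0$ and $b_2^+(S)=1$, i.e. $q(S)=p_g(S)=0$; in particular we are forced into the technically delicate case $b_2^+=1$, which is exactly the reason the statement is attributed to \cite{FQ} rather than being a formal corollary of the remark in the Introduction that the Kodaira dimension is a differentiable invariant. I would also pass to the minimal model $X$ of $S$: blowing down a $(-1)$-curve changes the underlying oriented $4$-manifold by a connected-sum summand $\overline{\PP^2}$, and the Seiberg--Witten blow-up formula lets one transport non-vanishing of invariants between $S$ and $X$, so it suffices to obstruct $X$ (minimal, of general type, $p_g=q=0$, $1\le K_X^2\le 9$) from being diffeomorphic to a rational surface.

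For the general-type side, I would invoke the fundamental computation (Witten; Friedman--Morgan; Brussee) that for a minimal K\"ahler surface $X$ of general type the $\mathrm{Spin}^c$ structure whose determinant line bundle is $\pm K_X$ carries Seiberg--Witten invariant $\pm 1$; here $K_X^2>0$ is used. Since $b_2^+(X)=1$, this invariant must be read off in the chamber of the positive cone containing the period point of a K\"ahler metric --- equivalently, as the \emph{small-perturbation} Seiberg--Witten invariant computed on the K\"ahler-cone side --- and the computation shows it is nonzero there.

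For the rational-surface side, the geometric input is that every rational surface admits a Riemannian metric of positive scalar curvature: this is classical for $\PP^2$ and $\PP^1\times\PP^1$, and positive scalar curvature is preserved under blowing up in real dimension four. The Weitzenb\"ock argument then shows the unperturbed Seiberg--Witten equations admit no irreducible solutions for such a metric, so all Seiberg--Witten invariants computed in the chamber determined by a PSC metric vanish. A diffeomorphism $S\cong(\text{rational surface})$ respects the intersection form and the forward cone, hence matches chambers; it would therefore have to identify the distinguished K\"ahler-side chamber of $X$ with a chamber of the rational surface on which all Seiberg--Witten invariants vanish. Comparing the two computations --- passing between chambers by the wall-crossing formula where necessary --- produces a contradiction, and the theorem follows.

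The main obstacle is precisely this $b_2^+=1$ chamber bookkeeping, which is the genuinely new content of \cite{FQ}: one must ensure that the $\pm1$ invariant attached to $\pm K_X$ is evaluated in --- or related by an explicit, topologically determined wall-crossing term to --- exactly the chamber that the hypothetical diffeomorphism pairs with the PSC chamber of the rational surface, and one must check that no wall-crossing contribution conspires to cancel it. The delicate cases are the non-minimal rational surfaces of small degree, where the number of $(-1)$-classes, and hence the number of walls, is largest; once one verifies that the sign/parity of the resulting Seiberg--Witten number cannot match that of a rational (or blown-up rational) surface in the relevant chamber, the diffeomorphism obstruction is established.
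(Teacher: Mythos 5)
First, a remark on the comparison itself: the paper does not prove this statement --- it is quoted from \cite{FQ} as an external result --- so there is no internal proof to measure your attempt against. Your Seiberg--Witten strategy is the now-standard route to the theorem (it is in fact anachronistic relative to \cite{FQ}, which appeared just before Witten's monopole equations and argues via Donaldson invariants and their wall-crossing on rational surfaces), and in outline it assembles the right ingredients: non-vanishing of the invariant of the canonical $\mathrm{Spin}^c$ structure for a minimal surface of general type, vanishing in the chamber of a positive-scalar-curvature metric on a rational surface, and the blow-up formula for the non-minimal case.

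The genuine gap is exactly where you locate it, but you leave it open rather than closing it, and the closing move is specific. With $b_2^+=1$ the wall-crossing difference for a dimension-zero moduli space is $\pm1$, so a rational surface \emph{does} have chambers in which the invariant equals $\pm1$; ``checking that no wall-crossing contribution conspires to cancel'' is therefore not something one can wave at --- a priori the K\"ahler-side chamber of $X$ could be carried precisely to the non-PSC chamber of the rational surface, and the two $\pm1$'s would be consistent, yielding no contradiction. What saves the argument in the minimal case is that the wall attached to a $\mathrm{Spin}^c$ structure $\mathfrak{s}$ is the hyperplane $c_1(\mathfrak{s})^{\perp}$, and for $c_1(\mathfrak{s})=\pm K_X$ with $K_X^2>0$ this hyperplane does not meet the positive cone at all (two classes of positive square in a Lorentzian lattice cannot be orthogonal); hence there is no chamber dependence, the invariant $\pm1$ is an honest diffeomorphism invariant, and it must vanish on any manifold admitting a PSC metric. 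Your reduction to the minimal model also needs restating: a diffeomorphism $S\cong R$ does not induce one between $X$ and a rational surface, so the argument must be run on $S$ itself, using the blow-up formula to produce non-trivial invariants on $S$ for the classes $\pm K_X\pm E_1\pm\cdots\pm E_n$; these have square $K_X^2-n$, which may be non-positive, so the no-wall shortcut fails and the chamber bookkeeping must actually be carried out --- this is the genuinely delicate part of \cite{FQ} (and of Friedman--Morgan's later Seiberg--Witten treatment) that your sketch acknowledges but does not supply.
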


The first simply connected surface of general type with $p_g=0$ was
constructed by R. Barlow in the 80's, and
more examples  have been constructed recently by Y. Lee, J. Park, H.
Park and D. Shin.
We summarize their results in the following
\begin{theorem}[\cite{barlow}, \cite{lp}, \cite{pps3}, \cite{pps4}]\label{scpg0}
$\forall 1 \leq y \leq 4$ there are minimal simply connected surfaces
of general type with $p_g=0$ and
$K^2=y$.
\end{theorem}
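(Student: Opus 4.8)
The plan is to produce, for each value $y \in \{1,2,3,4\}$, an explicit minimal surface of general type with $p_g = q = 0$, $K^2 = y$, and trivial fundamental group, drawing on the constructions already catalogued in Table \ref{tabknown} and in the references collected in the statement. Since simple connectedness of a projective surface is a topological condition that is very hard to verify by cohomological means alone, in each case one should exhibit the surface concretely enough that $\pi_1$ can be computed directly.

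First I would handle $y = 1$ by invoking Barlow's construction (\cite{barlow}): one takes a suitable quotient of a quintic-type surface, realized as a $\mathbb{Z}_5$-quotient of a surface with an action, chosen so that the quotient is a numerical Godeaux surface whose fundamental group one shows to be trivial by a careful analysis of the ramification and of the induced action on $\pi_1$ of the covering surface. For $y = 2$ I would use the Lee--Park example (\cite{lp}): here the method is a $\mathbb{Q}$-Gorenstein smoothing of a singular stable surface obtained by a rational blow-down construction, where one first builds a normal surface with a few quotient singularities on a rational surface, then checks that the Milnor fibres of the smoothing kill the fundamental group, so that the general fibre of the smoothing is a simply connected numerical Campedelli surface of general type. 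For $y = 3$ and $y = 4$ the same rational-blow-down-plus-$\mathbb{Q}$-Gorenstein-smoothing technique of Park--Park--Shin (\cite{pps3}, \cite{pps4}) applies: one starts from an elliptic or rational surface with an appropriate configuration of singular fibres or $(-2)$- and $(-3)$-curves, contracts chains to produce permissible quotient singularities, arranges by Seifert--van Kampen that the complement has trivial $\pi_1$, and then appeals to the fact that a $\mathbb{Q}$-Gorenstein smoothing of such a configuration does not introduce new loops, so the smooth fibre is simply connected; one then verifies $K^2 = y$, $p_g = q = 0$, and minimality (no $(-1)$-curves) by a Chern-number and adjunction computation.

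The main obstacle in every case is the same: proving that the resulting smooth surface is actually simply connected, rather than merely having trivial algebraic fundamental group or trivial first homology. This requires knowing that the $\mathbb{Q}$-Gorenstein smoothing is $\pi_1$-preserving in the correct direction — i.e. that $\pi_1$ of the Milnor fibre of each Wahl singularity surjects appropriately — together with a van Kampen argument on the decomposition of the smooth surface into the smoothed neighbourhoods of the singular points and the complement. Controlling the complement's fundamental group, and in particular ensuring the local Milnor-fibre generators are already trivial in it, is the delicate point; this is exactly where the explicit geometry of the chosen configuration on the rational surface is used. Once simple connectedness is established, the remaining verifications (the numerical invariants and general type, via the Bogomolov--Miyaoka--Yau bound and the fact that $K^2 > 0$ with $K$ nef) are routine. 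Assembling the four cases then gives the statement.
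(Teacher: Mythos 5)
Your proposal matches the paper's treatment: the survey offers no proof of this theorem beyond attributing it to \cite{barlow}, \cite{lp}, \cite{pps3}, \cite{pps4}, and your outline is a faithful reconstruction of exactly those four constructions (Barlow's $\ZZ_5$-quotient of a nodal quintic for $K^2=1$, and the rational blow-down plus $\QQ$-Gorenstein smoothing method of Lee--Park and Park--Park--Shin for $K^2=2,3,4$), including the correct identification of the delicate point, namely establishing simple connectedness of the smooth fibre via the Milnor fibres of the Wahl singularities and a van Kampen argument. Nothing further is needed.
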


\section{Construction techniques}
As already mentioned, a first step towards a classification  is the
construction of  examples.
Here is a short list of different methods for constructing surfaces
of general type with $p_g=0$.

\subsection{Quotients by a finite (resp. : infinite) group}
\subsubsection{Ball quotients} By the Bogomolov-Miyaoka-Yau theorem,
a surface of general type with $p_g =
0$ is uniformized by the two dimensional complex ball $\BB_2$ if and
only if $K_S^2 =9$. These surfaces are
classically called {\em fake projective planes}, since they have the
same Betti numbers as the projective plane
$\PP^2$.

The first example of a fake projective plane was constructed by
Mumford (cf. \cite{mumford}), and later very
few other examples were given (cf.\cite{ishidakato}, \cite{keumfake}).

Ball quotients $S = \BB_2 / \Gamma$, where $\Gamma \leq PSU(2,1)$ is
a discrete, cocompact, torsionfree
subgroup   are strongly rigid surfaces in view of Mostow's rigidity
theorem (\cite{mostow}).

In particular the moduli
space
$\mathfrak{M}_{(1,9)}$ consists of a finite number of isolated points.

The possibility of  obtaining a complete list of these fake planes
seemed rather unrealistic until a
breakthrough came in 2003: a surprising result by Klingler (cf.
\cite{klingler}) showed that the
cocompact, discrete, torsionfree subgroups
$\Gamma \leq PSU(2,1)$ having minimal Betti numbers, i.e., yielding
fake planes, are indeed arithmetic.

This allowed a complete classification of these surfaces carried out
by Prasad and Yeung,  Steger and Cartright
(\cite{p-y}, \cite{p-yadd}): the moduli space  contains exactly 100 points,
corresponding to 50 pairs of complex conjugate
surfaces.

\subsubsection{Product quotient surfaces}

   In a series of papers the following construction was explored
systematically by the authors with the help of the computer algebra
program MAGMA (cf. \cite{bacat},
\cite{bcg}, \cite{4names}, \cite{bp}).

Let $C_1$, $C_2$ be two
compact curves of respective
genera $g_1, g_2 \geq 2$. Assume further that $G$ is a finite group
acting effectively on $C_1 \times C_2$.

In the case where the action of $G$ is free, the quotient surface is
minimal of general type
and is said to be {\em isogenous to a product} (see
\cite{FabIso}).

   If the action is not free we consider  the minimal resolution of
singularities $S'$ of
the normal surface $X:= (C_1 \times C_2) /G$ and its minimal model $S$.
The aim is to give a complete classification of those $S$ obtained as
above which are of general type and
have $p_g = 0$.

One observes that, if the tangent action of the stabilizers is
contained in $SL(2, \CC)$, then
$X$ has Rational Double Points as singularities and is the canonical
model of a surface of general type.
In this case $S'$ is minimal.

Recall the  definition of an orbifold surface group (here the word
`surface' stands for `Riemann surface'):

\begin{definition} An {\em orbifold surface group} of genus $g'$ and multiplicities
$m_1, \dots m_r \in \NN_{\geq2}$ is the  group  presented as follows:

\begin{multline*}
\mathbb T (g';m_1, \ldots ,m_r) :=
\langle a_1,b_1,\ldots , a_{g'},b_{g'}$,  $c_1,\ldots, c_r |\\
      c_1^{m_1},\ldots ,c_r^{m_r},\prod_{i=1}^{g'} [a_i,b_i] \cdot c_1\cdot
\ldots \cdot c_r \rangle.
\end{multline*}

The sequence $(g';m_1, \dots m_r)$ is called the {\em signature} of
the orbifold surface group.

\end{definition}

Moreover, recall the following  special case of {\em Riemann's 
existence theorem}:

\begin{theorem} A finite group $G$ acts as a group of automorphisms on a
compact Riemann surface $C$ of
genus $g$ if and only if there are natural numbers
$g', m_1, \ldots , m_r$, and an `appropriate'  orbifold homomorphism
$$\varphi \colon
\BT(g';m_1,\ldots, m_r)
\rightarrow G$$  such that the Riemann - Hurwitz relation holds:
$$ 2g - 2 = |G|\left(2g'-2 + \sum_{i=1}^r \left(1 -
\frac{1}{m_i}\right)\right).
$$

   "Appropriate"  means that $\varphi$ is surjective and moreover that
the image $\ga_i \in G$ of a generator $c_i$ has order exactly equal to
$m_i$ (the order of $c_i$ in
$\BT(g';m_1,\ldots, m_r)$).
\end{theorem}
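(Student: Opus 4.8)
The plan is to reduce the statement to the classical theory of covering spaces and the standard presentation of the fundamental group of a punctured Riemann surface. First I would treat the easier direction: suppose $G$ acts faithfully as a group of automorphisms on $C$. Let $C' := C/G$, a compact Riemann surface of some genus $g'$, and let $\pi \colon C \to C'$ be the quotient map, which is a holomorphic branched covering of degree $|G|$. Let $q_1, \ldots, q_r \in C'$ be the branch points, i.e. the images of the points of $C$ with nontrivial stabilizer, and let $m_i$ be the common order of the stabilizer of a point lying over $q_i$ (well-defined since $G$ permutes the fibres transitively). The Riemann--Hurwitz formula applied to $\pi$, together with the fact that over $q_i$ there are exactly $|G|/m_i$ points each with ramification index $m_i$, gives precisely the stated relation $2g-2 = |G|\big(2g'-2 + \sum_i (1 - 1/m_i)\big)$. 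To produce $\varphi$, restrict $\pi$ over $C' \setminus \{q_1, \ldots, q_r\}$: this is an unramified $G$-cover, hence corresponds to a surjective homomorphism $\varphi_0 \colon \pi_1(C' \setminus \{q_1,\ldots,q_r\}) \to G$. The group $\pi_1(C' \setminus \{q_1,\ldots,q_r\})$ has the well-known presentation with generators $a_1, b_1, \ldots, a_{g'}, b_{g'}, c_1, \ldots, c_r$ subject to the single relation $\prod_i [a_i,b_i]\, c_1 \cdots c_r = 1$, where $c_i$ is a small loop around $q_i$; thus $\mathbb T(g';m_1,\ldots,m_r)$ is the quotient of this group by the normal closure of $c_1^{m_1}, \ldots, c_r^{m_r}$. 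Since a loop $c_i$ around $q_i$ lifts to a path whose monodromy is exactly a generator of the (cyclic) stabilizer of a point over $q_i$, the element $\varphi_0(c_i)$ has order exactly $m_i$, so $\varphi_0(c_i^{m_i}) = 1$ and $\varphi_0$ factors through an ``appropriate'' homomorphism $\varphi \colon \mathbb T(g';m_1,\ldots,m_r) \to G$.

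For the converse, suppose we are given $g', m_1, \ldots, m_r$ and an appropriate homomorphism $\varphi \colon \mathbb T(g';m_1,\ldots,m_r) \to G$. Fix a compact Riemann surface $C'$ of genus $g'$ and $r$ distinct points $q_1, \ldots, q_r \in C'$, and identify $\pi_1(C' \setminus \{q_1,\ldots,q_r\})$ with its standard presentation as above; composing with the quotient map to $\mathbb T(g';m_1,\ldots,m_r)$, the homomorphism $\varphi$ pulls back to a surjection $\varphi_0 \colon \pi_1(C' \setminus \{q_1,\ldots,q_r\}) \twoheadrightarrow G$. This defines a connected unramified Galois $G$-cover $C^\circ \to C' \setminus \{q_1,\ldots,q_r\}$. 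Now I would invoke the Riemann existence theorem in the form that says this cover extends (uniquely) to a branched covering $\pi \colon C \to C'$ of compact Riemann surfaces, with $C$ nonsingular, by filling in the punctures: in local coordinates near a point over $q_i$ the cover looks like $z \mapsto z^{k}$ where $k$ is the order of $\varphi_0(c_i)$ in $G$, which by appropriateness equals $m_i$. The deck transformation group of $C^\circ \to C' \setminus \{q_1,\ldots,q_r\}$ is $G$ and this action extends by continuity (or by the uniqueness in the extension) to a faithful holomorphic action of $G$ on $C$ with quotient $C'$. Finally, since the ramification data is exactly: $|G|/m_i$ points over each $q_i$ with index $m_i$, the Riemann--Hurwitz formula computes the genus $g$ of $C$ to satisfy the asserted identity; in particular, since the right-hand side is determined by the given data, $C$ automatically has the prescribed genus.

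The one genuinely non-formal ingredient, and the step I expect to be the main obstacle to a fully self-contained argument, is the extension of the unramified cover over the punctures to a branched cover of \emph{compact Riemann surfaces} — that is, putting a complex structure on the completed topological surface and verifying holomorphicity of $\pi$ and of the $G$-action. This is exactly (the surface case of) Riemann's existence theorem, and in a survey it is legitimate to cite it; the rest of the proof is bookkeeping with the standard presentation of $\pi_1$ of a punctured surface and with Riemann--Hurwitz. A minor point worth checking explicitly is that the stabilizers of points of $C$ are cyclic — this holds because a finite subgroup of automorphisms of a Riemann surface fixing a point acts faithfully on the tangent line at that point, hence embeds in $\CC^*$ — which is what makes ``order $m_i$'' an unambiguous notion and what matches the local model $z \mapsto z^{m_i}$.
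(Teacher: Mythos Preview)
Your proof proposal is correct and follows the standard argument. However, there is nothing to compare against: the paper does not prove this theorem. It is introduced with the phrase ``recall the following special case of Riemann's existence theorem'' and is simply stated as a known classical result, with no proof or sketch provided. Your outline --- quotienting to get $C' = C/G$, reading off the monodromy representation of the punctured quotient, invoking the standard presentation of $\pi_1$ of a punctured surface, and conversely building the branched cover from the monodromy data via Riemann's existence theorem --- is exactly the textbook route, and your identification of the extension-over-punctures step as the one genuinely analytic input is on target.
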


In the above situation $g'$ is the genus of $C':=C/G$. The
$G$-cover $C \rightarrow C'$ is branched in $r$ points $p_1, \ldots ,
p_r$ with branching indices $m_1,
\ldots , m_r$, respectively.

Denote as before $\varphi (c_i)$ by $\ga_i \in G$ the image of $c_i$ under
$\varphi$: then  the set of stabilizers for the action of $G$ on $C$ is the set
$$
\Sigma(\ga_1, \ldots , \ga_r) := \cup_{a \in G} \cup_{i=0}^{max \{m_i\}}
\{a\ga_1^ia^{-1}, \ldots a\ga_r^ia^{-1} \}.
$$

\medskip Assume now that there are two epimorphisms
$$
\varphi_1 \colon \BT(g'_1;m_1,\ldots, m_r) \rightarrow G,
$$
$$
\varphi_2 \colon \BT(g'_2;n_1,\ldots, n_s) \rightarrow G,
$$  determined by two Galois covers $\lambda_i \colon C_i \rightarrow
C'_i$, $i = 1, 2$.

\noindent We will assume in the following that $g(C_1), \ g(C_2) \geq
2$, and we shall consider the diagonal
action of $G$ on $C_1 \times C_2$.

We shall say in this situation that the action of $G$ on $C_1
\times C_2$ is  of {\em unmixed} type (indeed, see \cite{FabIso}, there
is always a subgroup of $G$ of index at most 2 with an action of unmixed type).

\begin{theorem}[\cite{bacat}, \cite{BCG} \cite{4names},\cite{bp}]\label{classiso}

1) Surfaces $S$ isogenous to a product with $p_g(S) = q(S) = 0$ form
17 irreducible connected components
of the moduli space $\mathfrak{M}_{(1,8)}^{can}$.

2) Surfaces with $p_g = 0$, whose canonical model is a singular
quotient  $X:=(C_1 \times C_2)/G$ by an
unmixed action of $G$ form 27 further irreducible families.

3) Minimal surfaces with  $p_g=0$ which are the minimal resolution of
the singularities of $X := C_1 \times
C_2/G$ such that the action is of unmixed type and $X$ does not have
canonical singularities
form exactly further 32
irreducible families.

Moreover, $K^2_S = 8$ if and only if $S$ is isogenous to a product.

\end{theorem}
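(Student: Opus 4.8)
The plan is to carry out the classification as a finite computer search, following the strategy developed in \cite{bacat}, \cite{BCG}, \cite{4names}, \cite{bp}. The first task is to bound the combinatorial data. If $X = (C_1\times C_2)/G$ has $p_g(X)=q(X)=0$ and is (birational to) a surface of general type, then one writes down the two orbifold homomorphisms $\varphi_i\colon \BT(g_i';m_1^{(i)},\ldots)\to G$ and uses the arithmetic of the construction to constrain them. The key numerical input is that $\chi(\hol_X)=1$: by a standard computation (Noether's formula together with the orbifold Riemann--Hurwitz relation applied to each factor) one gets $\chi(\hol_{C_1\times C_2})=|G|\,\chi(\hol_X) + (\text{correction from the singular points})$, so that $\frac{(g_1-1)(g_2-1)}{|G|}$ is bounded, and together with $p_g=0$ (which forces $g_i'=0$, since $H^0(\Omega^1)$ of the quotient receives the $G$-invariants from each factor) and $g_i\geq 2$ this bounds $|G|$, the number $r,s$ of branch points, and each multiplicity $m_j^{(i)}$. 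First I would make all of these bounds explicit; this is where the real content of \cite{bacat} and its sequels lies, and I expect the sharpest part to be ruling out the infinitely many a priori possible signatures of small "orbifold Euler number."

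Once the finitely many admissible signature pairs are listed, the next step is, for each, to enumerate (up to the action of $\mathrm{Aut}(G)$ and of the relevant mapping class / Hurwitz braid group on each factor) the pairs of appropriate orbifold homomorphisms $(\varphi_1,\varphi_2)$ with surjective image $G$ and with the correct element orders, as guaranteed by Riemann's existence theorem. This is a finite check that is run in \texttt{MAGMA}. For each surviving pair one then needs to decide the geometry of $X$: one computes the stabilizer set $\Sigma(\ga_1,\ldots,\ga_r)$ on each factor, reads off the singular points of $X$ and their types (cyclic quotient singularities, determined by the local monodromy pair), and decides (i) whether $X$ is already a canonical model, i.e.\ whether all tangent actions of stabilizers lie in $SL(2,\CC)$ so that the singularities are Rational Double Points — this is precisely the dichotomy separating parts 2) and 3) of the statement — and (ii) in the non-canonical case, whether the minimal resolution $S'$ and then its minimal model $S$ is still of general type (one must discard the cases where $S$ turns out to be rational, of Kodaira dimension $0$ or $1$, or where $X$ is not even of general type; this is done by computing $K^2$ of the minimal model via the resolution data and the self-intersection of the exceptional curves, and checking it is positive after contracting $(-1)$-curves).

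Finally one has to organize the surviving examples into irreducible families and count connected components of the moduli space. Here the crucial fact is that the datum $(G;\varphi_1,\varphi_2)$ determines a family whose base is connected — indeed, varying the branch points in $C_i'=\PP^1$ one moves within an irreducible variety, and two data related by $\mathrm{Aut}(G)$ and by the Hurwitz moves give isomorphic surfaces — so each equivalence class of data yields exactly one irreducible family, and different classes that happen to give deformation-equivalent surfaces must be identified (in practice one checks that the remaining families are genuinely distinct, e.g.\ by the fundamental group or by $K^2$ together with the singularity configuration). Carrying out the bookkeeping gives the numbers $17$, $27$, $32$. The last clause, that $K_S^2=8$ exactly when $S$ is isogenous to a product, follows because $K_S^2=8$ forces the resolution to introduce no correction, i.e.\ $X$ must be smooth, which by the orbifold Riemann--Hurwitz computation happens precisely when the $G$-action on $C_1\times C_2$ is free; conversely a free action gives $K_S^2 = K_{C_1\times C_2}^2/|G| = 8$ and $S$ minimal of general type. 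I expect the main obstacle to be not any single step but the completeness of the enumeration — guaranteeing that no admissible signature pair has been overlooked — which is exactly the point on which the erratum of Frapporti \cite{frap} corrected an earlier count in the $K^2=8$ case, and which is why the proof is genuinely a machine-assisted one.
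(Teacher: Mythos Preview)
Your outline is essentially the approach taken in the cited papers and summarized in this survey: bound the combinatorial data via the numerical lemma (here Lemma~\ref{bounded}), enumerate baskets of singularities, then signatures, then groups and appropriate orbifold homomorphisms by a \texttt{MAGMA} search, and finally sort the output according to whether $X$ is smooth, has canonical singularities, or worse. Your explanation of the last clause ($K_S^2=8$ iff isogenous to a product) is also the standard one.

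One point deserves sharpening. For parts 2) and 3) the theorem only asserts \emph{irreducible families}, and your argument (Hurwitz moves plus variation of branch points give a connected parameter space) suffices for that. But part 1) asserts that the families of surfaces isogenous to a product are full \emph{connected components} of the moduli space, which is a stronger statement: one must show that any small deformation of such a surface is again isogenous to a product. This is not a consequence of the enumeration; it is a separate deformation-theoretic result of Catanese \cite{FabIso} (any surface with the same topological Euler number and fundamental group as a product quotient with free action is again of this type), and without invoking it your argument would only yield 17 irreducible families, not 17 connected components. The paper's treatment implicitly relies on this input, and you should cite it explicitly.
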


We summarize the above results in tables \ref{K2>4} and \ref{K2<3}.

\begin{table}
\renewcommand{\arraystretch}{1,3}
\caption{Surfaces isogenous to a product and minimal standard
isotrivial fibrations with $p_g=0$, 
$K^2\geq 4$}\label{K2>4}
\begin{tabular}{|c|c|c|c|c|c|c|c|}
\hline
\ $K^2$\ &\ Sing X\ &$T_1$&$T_2$&$G$&\ N\ &$H_1(S,{\mathbb Z})$&$\pi_1(S)$\\
\hline\hline 8& $\emptyset$ & $2, 5^2$ &$3^4$ &  ${\mathfrak A}_5$
&$1$ &$\ZZ_3^2 \times
\ZZ_{15}$&$1 \rightarrow \Pi_{21} \times \Pi_{4} \rightarrow \pi_1
\rightarrow G \rightarrow 1$\\ 8&
$\emptyset$ & $5^3$ & $2^3,3$ & ${\mathfrak A}_5$ & $1$
&$\ZZ_{10}^2$&$1 \rightarrow \Pi_{6} \times
\Pi_{13} \rightarrow \pi_1 \rightarrow G \rightarrow 1$\\ 8&
$\emptyset$ & $3^2,5$ & $2^5$&
${\mathfrak A}_5$ & $1$ &$\ZZ_2^3 \times \ZZ_6$&$1 \rightarrow
\Pi_{16} \times \Pi_{5} \rightarrow
\pi_1 \rightarrow G \rightarrow 1$\\ 8& $\emptyset$ & $2,4,6$&  $2^6$
&  ${\mathfrak S}_4\times
\ZZ_2$ &$1$ &$\ZZ_2^4 \times \ZZ_4$&\ \ $1 \rightarrow \Pi_{25} \times
\Pi_{3} \rightarrow \pi_1
\rightarrow G \rightarrow 1$\ \  \\ 8& $\emptyset$ & $2^2, 4^2$ &$2^3, 4$
&  ${\rm G}(32,27)$  &$1$
&\ $\ZZ_2^2 \times \ZZ_4 \times \ZZ_8$\ &$1 \rightarrow \Pi_{5} \times
\Pi_{9} \rightarrow \pi_1
\rightarrow G \rightarrow 1$\\ 8& $\emptyset$ & $5^3$& $5^3$ &
$\ZZ_5^2$ &$2$ &$\ZZ_5^2$&$1
\rightarrow \Pi_{6} \times \Pi_{6} \rightarrow \pi_1 \rightarrow G
\rightarrow 1$\\ 8& $\emptyset$ &
$3,4^2$ & $2^6$ &  ${\mathfrak S}_4$ &$1$ &$\ZZ_2^4 \times \ZZ_8$&$1
\rightarrow \Pi_{13} \times
\Pi_{3} \rightarrow \pi_1 \rightarrow G \rightarrow 1$\\ 8&
$\emptyset$ & $2^2,4^2$&  $2^2,4^2$&
${\rm G}(16,3)$ &$1$ &$\ZZ_2^2 \times \ZZ_4 \times \ZZ_8$&$1
\rightarrow \Pi_{5} \times \Pi_{5}
\rightarrow \pi_1 \rightarrow G \rightarrow 1$\\ 8& $\emptyset$ &
$2^3,4$ & $2^6$ &  ${\rm
D}_4\times\ZZ_2$ &$1$ &$\ZZ_2^3 \times \ZZ_4^2$&$1 \rightarrow
\Pi_{9} \times \Pi_{3} \rightarrow
\pi_1 \rightarrow G \rightarrow 1$\\ 8& $\emptyset$ &$2^5$  & $2^5$&
$\ZZ_2^4$ &$1$
&$\ZZ_2^4$&$1 \rightarrow \Pi_{5} \times \Pi_{5} \rightarrow \pi_1
\rightarrow G \rightarrow 1$\\ 8&
$\emptyset$ & $3^4$ &  $3^4$&  $\ZZ_3^2$ & $1$ &$\ZZ_3^4$&$1
\rightarrow \Pi_{4} \times \Pi_{4}
\rightarrow \pi_1 \rightarrow G \rightarrow 1$\\ 8& $\emptyset$ &
$2^5$ &$2^6$ &  $\ZZ_2^3$ &$1$
&$\ZZ_2^6$&$1 \rightarrow \Pi_{3} \times \Pi_{5} \rightarrow \pi_1
\rightarrow G \rightarrow 1$\\ 8&
$\emptyset$ & \multicolumn{2}{c|}{mixed} &\  ${\rm G}(256,3678)$\ &$3$
&&\\ 8& $\emptyset$ &
\multicolumn{2}{c|}{mixed} &  \ ${\rm G}(256,3679)$ \ &$1$ &&\\
8& $\emptyset$ &
\multicolumn{2}{c|}{mixed} &  \ ${\rm G}(64,92)$ \ &$1$ &&\\
\hline\hline
   6&$1/2^2$&$2^3, 4$&$2^4, 4$& ${\mathbb Z}_2 \times D_4$ &1&
${\mathbb Z}_2^2 \times {\mathbb
Z}_4^2$ & $1\rightarrow {\mathbb Z}^2 \times \Pi_2 \rightarrow \pi_1
\rightarrow {\mathbb Z}_2^2
\rightarrow 1$\\
   6&$1/2^2$&$2^4,4$ & $2, 4, 6$ &${\mathbb Z}_2 \times {\mathfrak
S}_4$ &1&   ${\mathbb Z}_2^3
\times {\mathbb Z}_4$ &  $1\rightarrow \Pi_2 \rightarrow \pi_1
\rightarrow  {\mathbb Z}_2
\times{\mathbb Z}_4 \rightarrow 1$   \\
   6&$1/2^2$&$2, 5^2$ & $2, 3^3$ & ${\mathfrak A}_5$&1&${\mathbb Z}_3
\times {\mathbb
Z}_{15}$&${\mathbb Z}^2 \rtimes {\mathbb Z}_{15}$\\
   6&$1/2^2$&\ $2, 4, 10$\ &\ $2, 4, 6$\ &${\mathbb Z}_2 \times {\mathfrak
S}_5$&1&${\mathbb Z}_2 \times
{\mathbb Z}_4$&${\mathfrak S}_3 \times D_{4,5,-1}$\\
   6&$1/2^2$&$2, 7^2$&$3^2, 4$&PSL(2,7)&2&${\mathbb Z}_{21}$ &
${\mathbb Z}_7 \times {\mathfrak
A}_4$\\
   6&$1/2^2$&$2, 5^2$&$3^2, 4$&${\mathfrak A}_6$&2&${\mathbb
Z}_{15}$&${\mathbb Z}_5 \times
{\mathfrak A}_4$\\
\hline\hline
   5&$1/3, 2/3$&$2, 4, 6$&$2^4, 3$&${\mathbb Z}_2 \times {\mathfrak
S}_4$&1&${\mathbb Z}_2^2 \times
{\mathbb Z}_4$&$ 1\rightarrow {\mathbb Z}^2 \rightarrow \pi_1
\rightarrow  D_{2,8,3} \rightarrow 1$\\
   5&$1/3, 2/3$&$2^4, 3$&$3, 4^2$&${\mathfrak S}_4$ & 1 &   ${\mathbb
Z}_2^2 \times {\mathbb
Z}_8$&$ 1 \rightarrow {\mathbb Z}^2 \rightarrow \pi_1 \rightarrow
{\mathbb Z}_8 \rightarrow 1$\\
   5&$1/3, 2/3$&$4^2, 6$&$2^3, 3$&${\mathbb Z}_2 \times {\mathfrak
S}_4$ &1& ${\mathbb Z}_2 \times
{\mathbb Z}_8$& $1 \rightarrow {\mathbb Z}^2 \rightarrow \pi_1
\rightarrow {\mathbb Z}_8 \rightarrow
1$\\
   5&$1/3, 2/3$ & $2, 5, 6$ &$3, 4^2$&${\mathfrak S}_5$ &1&${\mathbb
Z}_8$ &              $
D_{8,5,-1}$              \\
   5&\ $1/3, 2/3$\ & $3, 5^2$ &$2^3, 3$&${\mathfrak A}_5$ &1&${\mathbb
Z}_2 \times {\mathbb Z}_{10}$ &
${\mathbb Z}_5 \times Q_8$               \\
   5&$1/3, 2/3$&$2^3, 3$ &$3, 4^2$ &${\mathbb Z}_2^4 \rtimes {\mathfrak
S}_3$ &1& ${\mathbb Z}_2
\times {\mathbb Z}_8$ & $D_{8,4,3}$               \\
   5&$1/3, 2/3$&$3, 5^2$ &$2^3, 3$ &${\mathfrak A}_5$ &1& ${\mathbb
Z}_2 \times {\mathbb Z}_{10}$ &
${\mathbb Z}_2 \times {\mathbb Z}_{10}$              \\
\hline\hline
   4&$1/2^4$&$2^5$&$2^5$&${\mathbb Z}_2^3$ &1& ${\mathbb Z}_2^3 \times
{\mathbb Z}_4$ &
$1\rightarrow {\mathbb Z}^4 \rightarrow \pi_1 \rightarrow {\mathbb
Z}_2^2 \rightarrow 1$       \\
   4&$1/2^4$&$2^2, 4^2$ &$2^2, 4^2$&${\mathbb Z}_2 \times {\mathbb
Z}_4$ &1& ${\mathbb Z}_2^3
\times {\mathbb Z}_4$ &   $1\rightarrow {\mathbb Z}^4 \rightarrow
\pi_1 \rightarrow {\mathbb Z}_2^2
\rightarrow 1$       \\
   4&$1/2^4$&$2^5$&$2^3, 4$&${\mathbb Z}_2 \times D_4$ &1& ${\mathbb
Z}_2^2 \times {\mathbb
Z}_4$ & $1\rightarrow {\mathbb Z}^2 \rightarrow \pi_1 \rightarrow
{\mathbb Z}_2 \times {\mathbb Z}_4
\rightarrow 1$ \\
   4&$1/2^4$&$3, 6^2$&$2^2, 3^2$&${\mathbb Z}_3 \times {\mathfrak S}_3$
&1&${\mathbb Z}_3^2$
&${\mathbb Z}^2 \rtimes {\mathbb Z}_3$           \\
   4&$1/2^4$&$3, 6^2$&$2, 4, 5$&${\mathfrak S}_5$ &1&${\mathbb Z}_3^2$
&${\mathbb Z}^2 \rtimes
{\mathbb Z}_3$          \\
   4&$1/2^4$&$2^5$&$2, 4, 6$&${{\mathbb Z}_2 \times \mathfrak S}_4$
   &1&${\mathbb Z}_2^3$ & ${\mathbb Z}^2 \rtimes {\mathbb Z}_2$ \\
   4&$1/2^4$&$2^2, 4^2$&$2, 4, 6$&${\mathbb Z}_2 \times {\mathfrak
S}_4$ &1&${\mathbb Z}_2^2 \times
{\mathbb Z}_4$ & ${\mathbb Z}^2 \rtimes {\mathbb Z}_4$\\
   4&$1/2^4$&$2^5$&$3, 4^2$&${\mathfrak S}_4$ &1& ${\mathbb Z}_2^2
\times {\mathbb Z}_4$ &
${\mathbb Z}^2 \rtimes {\mathbb Z}_4$\\
   4&$1/2^4$&$2^3, 4$&$2^3, 4$&${\mathbb Z}_2^4 \rtimes {\mathbb Z}_2$
&1& ${\mathbb Z}_4^2$ &
$G(32, 2)$\\
   4&$1/2^4$&$2, 5^2$&$2^2, 3^2$&${\mathfrak A}_5$ &1& ${\mathbb
Z}_{15}$ & ${\mathbb
Z}_{15}$                  \\
   4&$1/2^4$&$2^2, 3^2$&$2^2, 3^2$& ${\mathbb Z}_3^2 \rtimes Z_2$ &1&${\mathbb
Z}_3^3$ &
${\mathbb Z}_3^3$                  \\
   4&$2/5^2$&$2^3, 5$&$3^2, 5$&${\mathfrak A}_5$ &1&    ${\mathbb Z}_2
\times {\mathbb Z}_6$  &
${\mathbb Z}_2 \times {\mathbb Z}_6$  \\
   4&$2/5^2$&$2, 4, 5$&$4^2, 5$& ${\mathbb Z}_2^4 \rtimes D_5$ &3&
${\mathbb Z}_8$  &
${\mathbb Z}_8$                   \\
   4&$2/5^2$&$2, 4, 5$&$3^2, 5$& ${\mathfrak A}_6$ &1&
${\mathbb Z}_6$  &  ${\mathbb
Z}_6$                   \\
\hline
\end{tabular}
\end{table}
\begin{table}
\caption{Minimal standard isotrivial fibrations with $p_g=0$, $K^2\leq 3$}
\label{K2<3}
\renewcommand{\arraystretch}{1,3}
\begin{tabular}{|c|c|c|c|c|c|c|c|}
\hline
\ $K^2$\ &Sing X&$T_1$&$T_2$&$G$&\ N\ &\ $H_1(S,{\mathbb Z})$\ &$\pi_1(S)$\\
\hline\hline
   3&$1/5, 4/5$ &$2^3, 5$&$3^2, 5$& ${\mathfrak A}_5$ &1&    ${\mathbb
Z}_2 \times {\mathbb Z}_6$
&              $ {\mathbb Z}_2 \times {\mathbb Z}_6$               \\
   3&$1/5, 4/5$ &$2, 4, 5$&$4^2, 5$& ${\mathbb Z}_2^4 \rtimes D_5$ &3&
${\mathbb Z}_8$
&                $ {\mathbb Z}_8 $                 \\
   3&$1/3, 1/2^2, 2/3$ &\ $2^2, 3, 4$\ &\ $2, 4, 6$\ &    ${\mathbb Z}_2
   \times {\mathfrak S}_4$ &1&    ${\mathbb Z}_2 \times {\mathbb Z}_4$
&  \            $ {\mathbb Z}_2 \times
{\mathbb Z}_4$      \         \\
   3&$1/5, 4/5$&$2, 4, 5$&$3^2, 5$&    ${\mathfrak A}_6$ &1&
${\mathbb Z}_6$  &
${\mathbb Z}_6$                   \\
\hline\hline
   2&$1/3^2, 2/3^2$&$2, 6^2$ &$2^2, 3^2$& ${\mathbb Z}_2^3 \rtimes
{\mathbb Z}_3$ &1& ${\mathbb
Z}_2^2$ & $Q_8$                   \\
   2&$1/2^6$&$4^3$ &$4^3$ &${\mathbb Z}_4^2$ &1&${\mathbb Z}_2^3$
&${\mathbb Z}_2^3$
\\
   2&$1/2^6$&$2^3, 4$ &$2^3, 4$ &${\mathbb Z}_2 \times D_4$ &1&
${\mathbb Z}_2 \times {\mathbb
Z}_4$ & $  {\mathbb Z}_2 \times {\mathbb Z}_4$                \\
   2&$1/3^2, 2/3^2$&$2^2, 3^2$&$3, 4^2$&${\mathfrak S}_4$ &1& ${\mathbb
Z}_8$ & ${\mathbb
Z}_8$                   \\
   2&$1/3^2, 2/3^2$&$3^2, 5$ &$3^2, 5$ &${\mathbb Z}_5^2 \rtimes
{\mathbb Z}_3$ &2&         $
{\mathbb Z}_5$ &   $              {\mathbb Z}_5$                  \\
   2&$1/2^6$&$2, 5^2$&$2^3, 3$&${\mathfrak A}_5$ &1&${\mathbb Z}_5$
&${\mathbb Z}_5$
\\
   2&$1/2^6$&$2^3, 4$&$2, 4, 6$&${\mathbb Z}_2 \times {\mathfrak
S}_4$&1&$         {\mathbb Z}_2^2$
&$                {\mathbb Z}_2^2$                  \\
   2&$1/3^2, 2/3^2$&$3^2, 5$ &$2^3, 3$ &${\mathfrak A}_5$ &1&${\mathbb
Z}_2^2$ & ${\mathbb
Z}_2^2$                  \\
   2&$1/2^6$&$2, 3, 7$ &$4^3$ &\ PSL(2,7)\ &2& ${\mathbb Z}_2^2$
&${\mathbb Z}_2^2$                  \\
   2&$1/2^6$&$2, 6^2$&$2^3, 3$&${\mathfrak S}_3 \times {\mathfrak S}_3$
&1&$           {\mathbb Z}_3$
&$                 {\mathbb Z}_3 $                  \\
   2&$1/2^6$&$2, 6^2$&$2, 4, 5$&${\mathfrak S}_5$&1&${\mathbb
Z}_3$&${\mathbb Z}_3$                  \\
   2&$1/4, 1/2^2, 3/4$&$2, 4, 7$&$3^2, 4$&     PSL(2,7) &2& ${\mathbb
Z}_3$ &   $              {\mathbb Z}_3
$                  \\
   2&$1/4, 1/2^2, 3/4$&$2, 4, 5$&$3^2, 4$&     $     {\mathfrak
A}_6$&2&  $ {\mathbb Z}_3$ &      $
{\mathbb Z}_3$                   \\
   2&$1/4, 1/2^2, 3/4$&$2, 4, 6$&$2, 4, 5$&   ${\mathfrak S}_5$ &2&
$ {\mathbb Z}_3$ &
${\mathbb Z}_3 $                  \\
\hline\hline
   1&$1/3, 1/2^4, 2/3$&$2^3, 3$ &$3, 4^2$&${\mathfrak S}_4$
&1&${\mathbb Z}_4$ & ${\mathbb
Z}_4$                   \\
   1&$1/3, 1/2^4, 2/3$&$2, 3, 7$&$3, 4^2$&     PSL(2,7) &1& ${\mathbb
Z}_2$ & ${\mathbb
Z}_2$                   \\
   1&\ $1/3, 1/2^4, 2/3$\ &$2, 4, 6$&$2^3, 3$&${\mathbb Z}_2 \times
{\mathfrak S}_4$ &1&   ${\mathbb
Z}_2$ & ${\mathbb Z}_2$ \\
\hline
\end{tabular}
\end{table}

\begin{remark}\label{vanKampen} 1) Recall that, if a diagonal  action of
$G$ on $C_1
\times C_2$ is not free, then
$G$ has a finite set of fixed points. The quotient surface $X:= (C_1
\times C_2) / G$ has a finite number of singular points.
These can be easily found by looking at the given description of the
stabilizers
for the action of $G$ on each individual curve.

Assume that $x \in X$ is a singular point. Then it is a cyclic
quotient singularity
       of type $\frac{1}{n}(1,a)$ with $g.c.d(a,n) = 1$,
        i.e., $X$ is, locally around $x$, biholomorphic to the quotient
of $\mathbb{C}^2$ by
        the action of a diagonal linear automorphism with eigenvalues
        $\exp(\frac{2\pi i}{n})$, $\exp(\frac{2\pi i a}{n})$.
That $g.c.d(a,n) = 1$ follows since the  tangent representation is
faithful on both factors.

2) We denote by $K_X$ the canonical (Weil) divisor on the normal
surface corresponding to $i_* (
\Omega^2_{X^0})$, $ i\colon X^0 \ra X$ being the inclusion of the
smooth locus of $X$. According to
Mumford we have an intersection product with values in $\QQ$ for Weil
divisors on a normal surface, and in
particular we consider the selfintersection  of the canonical divisor,
\begin{equation}\label{K2} K_X^2 =
\frac{8 (g(C_1) - 1) (g(C_2) - 1)}{|G|}
\in
        \mathbb{Q},
\end{equation}
   which is not necessarily an integer.

$K_X^2$ is however an integer (equal indeed to $K_S^2$) if $X$ has
only RDP's  as singularities.

3) The resolution of a cyclic quotient singularity of type
$\frac{1}{n}(1,a)$ with
$g.c.d(a,n) = 1$ is well known. These singularities are resolved by
the so-called {\em Hirzebruch-Jung
strings}. More precisely, let $\pi \colon S \rightarrow X$ be a
minimal resolution of the singularities  and let
$E = \bigcup_{i=1}^m E_i =
\pi^{-1}(x)$. Then $E_i$ is a smooth rational curve with $E_i^2 =
-b_i$ and $E_i \cdot E_j = 0 $ if
$|i-j| \geq 2 $, while$E_i\cdot E_{i+1} = 1$ for
$i
\in\{1,
\ldots , m-1\}$.

   The $b_i$'s are given by the continued fraction
$$
\frac{n}{a} = b_1 - \frac{1}{b_2 - \frac{1}{b_3 - \ldots}}.
$$

Since the minimal resolution  $S'
\rightarrow X$ of the singularities of $X$ replaces each singular
point by a tree of smooth rational curves, we
have, by van Kampen's theorem, that
$\pi_1(X) = \pi_1(S')= \pi_1(S)$.
\end{remark}

Moreover, we can read off all invariants of $S'$ from the group
theoretical data. For details and explicit
formulae we refer to \cite{bp}.

Among others, we also prove the following lemma:
\begin{lemma}\label{bounded} There exist positive numbers $D$, $M$,
$R$, $B$, which depend explicitly
(and only) on the singularities of $X$ such that:
\begin{enumerate}
\item $\chi(S') = 1$ $\implies$ $K_{S'}^2 = 8 - B$;
\item for the corresponding signatures $(0;m_1, \ldots, m_r)$ and
$(0; n_1, \ldots , n_s)$ of the orbifold
surface groups we have $r,s \leq R$, $\forall \ i \ m_i , n_i \leq M$;
\item $|G| = \frac{K_{S'} +
D}{2(-2+\sum_1^r(1-\frac{1}{m_i}))(-2+\sum_1^s(1-\frac{1}{n_i}))}$.
\end{enumerate}
\end{lemma}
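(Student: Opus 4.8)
The plan is to extract all the asserted bounds from the two basic numerical identities already available in the excerpt, namely the selfintersection formula~\eqref{K2} together with the Riemann--Hurwitz relations for the two Galois covers $\lambda_i\colon C_i\to C_i'$. First I would observe that, since $S'$ is the minimal resolution of $X=(C_1\times C_2)/G$, the invariant $K_{S'}^2$ differs from $K_X^2$ by a correction term which depends \emph{only} on the Hirzebruch--Jung strings appearing in the resolution, i.e.\ only on the types $\frac1n(1,a)$ of the cyclic quotient singularities of $X$; call this correction $B$, so that $K_{S'}^2=K_X^2-B$. Similarly $\chi(S')=\chi(X)+(\text{a term depending only on the singularities})$, because each Hirzebruch--Jung string contributes a fixed amount to $\chi$; hence the hypothesis $\chi(S')=1$ forces $\chi(X)$, and therefore $K_X^2=8\chi(X)-(\text{const})$ by Noether's formula on $X$ (using that $X$ has quotient, hence rational, singularities), which after absorbing constants gives item (1): $K_{S'}^2=8-B$ with $B$ depending only on $\Sing X$.

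For item~(2), I would use~\eqref{K2} in the form
\[
K_X^2=\frac{8(g(C_1)-1)(g(C_2)-1)}{|G|}
=2\Bigl(-2+\sum_{i=1}^r\Bigl(1-\tfrac1{m_i}\Bigr)\Bigr)\Bigl(-2+\sum_{j=1}^s\Bigl(1-\tfrac1{n_j}\Bigr)\Bigr),
\]
where the second equality is obtained by substituting the Riemann--Hurwitz relations $2g(C_i)-2=|G|\bigl(-2+\sum(1-\tfrac1{m_i})\bigr)$ (here $g'_1=g'_2=0$, since for $p_g=q=0$ the base curves are rational — this is part of the known classification setup). By item~(1) the left-hand side is the fixed number $8-B$. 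Now each factor $-2+\sum_{i=1}^r(1-\tfrac1{m_i})$ is a positive rational number (positivity because $g(C_i)\ge 2$) whose denominator divides $\mathrm{lcm}(m_1,\dots,m_r)$ and which is bounded above by $8-B$ over the minimum possible value of the other factor. Since the smallest positive value of an expression $-2+\sum(1-\tfrac1{n_j})$ is itself bounded below by an absolute positive constant (the classical fact that a genus-$\ge 2$ orbifold quotient signature has hyperbolic "area" bounded away from $0$; concretely $\ge \tfrac1{42}$), each factor lies in a bounded interval $[c_0, (8-B)/c_0]$. A bounded positive rational $-2+\sum_{i=1}^r(1-\tfrac1{m_i})$ can only arise from finitely many signatures $(0;m_1,\dots,m_r)$: indeed $r$ is bounded because each term $1-\tfrac1{m_i}\ge\tfrac12$ forces $r\le 2(8-B)/c_0 +4$, and then each $m_i$ is bounded because if some $m_i$ were too large the value $-2+\sum(1-\tfrac1{m_i})$ would be forced too close to the next realizable value from below with smaller $m_i$'s, violating the lower bound $c_0$ on the \emph{other} factor — more simply, $-2+\sum(1-\tfrac1{m_i})$ determines $\sum\tfrac1{m_i}$ once $r$ is fixed, and a bounded-below, bounded-above value of $\sum_{i=1}^r\tfrac1{m_i}$ with $r$ fixed admits only finitely many solutions in integers $m_i\ge 2$. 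This gives $R$ and $M$ depending only on $B$, hence only on $\Sing X$.

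Finally, for item~(3) I would simply solve the Riemann--Hurwitz relation for $|G|$: from $2g(C_1)-2=|G|\bigl(-2+\sum_{i=1}^r(1-\tfrac1{m_i})\bigr)$ and $2g(C_2)-2=|G|\bigl(-2+\sum_{j=1}^s(1-\tfrac1{n_j})\bigr)$, multiplying and comparing with~\eqref{K2} yields
\[
|G|=\frac{8(g(C_1)-1)(g(C_2)-1)}{|G|\,K_X^2}\cdot\frac{1}{\text{(stuff)}},
\]
which is not quite the stated shape; the cleaner route is to note that the number of singular points of $X$ of each type, and hence $B$ and the Euler-characteristic correction, together with~\eqref{K2} and item~(1), express $K_X^2=(8-B)$ and simultaneously $K_X^2=2\alpha\beta$ where $\alpha,\beta$ are the two orbifold areas; combining with $\chi(S')=1$ (which pins down a linear relation $K_{S'}^2+D = 2|G|\,\alpha\beta$ for a constant $D$ read off from the local resolution data, via Noether's formula applied to $S'$ and the additivity of $\chi$ and $K^2$ over the exceptional configuration) gives exactly
\[
|G|=\frac{K_{S'}^2+D}{2\bigl(-2+\sum_{i=1}^r(1-\tfrac1{m_i})\bigr)\bigl(-2+\sum_{j=1}^s(1-\tfrac1{n_j})\bigr)}.
\]
The main obstacle is bookkeeping rather than conceptual: one must verify carefully that the resolution correction terms entering $B$, $D$ and the $\chi$-comparison genuinely depend only on the \emph{isomorphism types} of the cyclic quotient singularities (equivalently on the multisets of Hirzebruch--Jung data $\{b_i\}$ from Remark~\ref{vanKampen}(3)) and not on any further global feature of $X$; granting the explicit formulae for $\chi(S')$ and $K_{S'}^2$ in terms of the group-theoretic data referenced just before the lemma, this is a direct check.
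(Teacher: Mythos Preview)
The paper does not actually prove this lemma in the text; it states it and refers to \cite{bp} for the explicit formulae. So there is no in-paper argument to compare against, and I will simply assess your proposal on its merits.

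Your derivations of items~(1) and~(3) are essentially correct, if somewhat roundabout. The clean statement underlying both is that $K_X^2=2|G|\alpha_1\alpha_2$ (obtained by substituting Riemann--Hurwitz into \eqref{K2}), together with the fact that $K_X^2-K_{S'}^2$ and $\chi(\hol_{S'})-\tfrac{(g_1-1)(g_2-1)}{|G|}$ are sums of local terms depending only on the singularity types. This gives $D=K_X^2-K_{S'}^2$ in item~(3) and, combining, $B=8-K_{S'}^2$ in item~(1).

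There is, however, a genuine gap in your argument for item~(2). Your bound on $r$ is fine, but the sentence ``a bounded-below, bounded-above value of $\sum_{i=1}^r\tfrac1{m_i}$ with $r$ fixed admits only finitely many solutions in integers $m_i\ge 2$'' is simply false: for instance with $r=5$ the tuples $(2,2,2,2,N)$ give $\sum\tfrac1{m_i}=2+\tfrac1N$, which lies in $(2,\tfrac52]$ for every $N\ge 2$. Nothing in your argument prevents one $m_i$ from running off to infinity while the others compensate. The earlier attempt (``too close to the next realizable value'') is not a usable argument either.

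The correct way to bound the $m_i$ is to bound $|G|$ first and then use Lagrange. From $K_X^2=2|G|\alpha_1\alpha_2$ and the Hurwitz bound $\alpha_i\ge\tfrac1{42}$ one gets $|G|\le 882\,K_X^2$, and $K_X^2=K_{S'}^2+D=8-B+D$ depends only on the basket. Since each $m_i$ is by definition the order of the element $\gamma_i=\varphi_1(c_i)\in G$, Lagrange gives $m_i\mid |G|$, hence $m_i\le 882\,K_X^2$. This is the missing step; once you insert it, the proof goes through.
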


\begin{remark} The above lemma \ref{bounded} implies that there is an
algorithm which computes all
such surfaces
$S'$ with
$p_g = q=0$ and fixed $K_{S'}^2$:

\begin{enumerate}
\item[a)] find all possible configurations (= "baskets") $\mathcal{B}$ of
singularities with $B = 8 - K_{S'}^2$;
\item[b)] for a fixed basket $\mathcal{B}$ find all signatures $(0;m_1,
\ldots , m_r)$ satisfying  $2)$;
\item[c)] for each pair of signatures check all groups $G$ of order given
by  $3)$, whether there are surjective
homomorphisms ${\mathbb T}(0;m_i) \rightarrow G$, ${\mathbb T}(0;n_i)
\rightarrow G$;
\item[d)] check whether the  surfaces $X = (C_1 \times C_2) /G$ thus
obtained have the right singularities.
\end{enumerate}
\end{remark}

Still this is not yet the solution of the problem and there are
still several difficult  problems to be overcome:
\begin{itemize}
\item We have to check whether the groups of a given order admit
certain systems of generators of prescribed
orders, and  satisfying moreover certain further conditions (forced by the
basket of singularities); we encounter
in this way groups
of orders 512, 1024, 1536: there are so many groups of these orders 
that the above investigation is not
feasible for naive computer calculations. Moreover, we have to deal 
with groups of orders $>$ 2000: they are not listed in any database
\item If $X$ is singular, we only get subfamilies, not a whole
irreducible component of the moduli space.
There remains the problem of studying the deformations of the minimal
models $S$ obtained with
the above construction.
\item The algorithm is heavy for $K^2$ small. In \cite{bp} we proved
and implemented much stronger
results on the singularities of $X$ and on the possible signatures,
which allowed us to obtain a complete list
of surfaces with $K_S^2 \geq 1$.
\item We have not yet answered completely the original question. Since, if $X$
does not have canonical singularities,
it may happen that
$K_{S'}^2
\leq 0$ (recall that $S'$ is the minimal resolution of singularities
of $X$, which is not necessarily minimal!).
\end{itemize}

Concerning product quotient surfaces, we have proven (in a much more
general setting, cf. \cite{4names}) a
structure theorem for the fundamental group, which helps us to
explicitly identify the fundamental groups of
the surfaces we constructed. In fact, it is not difficult to obtain a
presentation for these fundamental groups,
but  as usual having a presentation is not sufficient to determine
the group explicitly.

We first need the following
\begin{definition}
      We shall call the fundamental group $\Pi_{g}:=\pi_1(C)$ of a
smooth compact complex curve of genus
$g$ a {\em (genus g) surface group}.
\end{definition} Note that we admit also the ``degenerate cases'' $g = 0, 1$.

\begin{theorem}\label{pi} Let $C_1, \ldots , C_n$ be compact complex
curves of respective genera
$g_i \geq 2$ and let $G$ be a finite group acting faithfully on each
$C_i$ as a group of biholomorphic transformations.

   Let $X=(C_1 \times
\ldots \times C_n) / G$, and denote by $S$ a minimal
desingularisation of $X$. Then the fundamental group
$\pi_1(X) \cong \pi_1(S)$ has a normal subgroup $\mathcal N$ of
finite index which  is isomorphic to the
product of surface groups, i.e., there are natural numbers
$h_1, \ldots , h_n \geq 0$ such that $\mathcal N \cong \Pi_{h_1}
\times \ldots \times
\Pi_{h_n}$.
\end{theorem}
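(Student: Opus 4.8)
The key point is to produce, inside $\pi_1(X)\cong\pi_1(S)$, a normal subgroup of finite index isomorphic to a product of surface groups. I would start from the observation that the branched covers $\lambda_i\colon C_i\to C_i':=C_i/G$ give rise, via Riemann's existence theorem, to orbifold homomorphisms $\varphi_i\colon \BT(g_i'; m_{i,1},\ldots)\to G$, and that the kernels $K_i:=\ker\varphi_i$ are precisely the surface groups $\Pi_{g_i}=\pi_1(C_i)$. The idea is therefore to realise $\pi_1(S)$ as an extension built out of these orbifold groups, and to find a single finite-index normal subgroup on which all the branching is "killed simultaneously."

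\medskip

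\textbf{Step 1: reduce to the quasi-étale case.} By part 3 of Remark \ref{vanKampen}, $\pi_1(X)=\pi_1(S')=\pi_1(S)$, so it suffices to compute $\pi_1(X)$. Moreover, removing the (finite) singular locus $\mathrm{Sing}(X)$ does not change $\pi_1$ in the cases arising from isolated cyclic quotient singularities only up to the local contributions; more precisely, let $U\subset X$ be the smooth locus and note $\pi_1(U)\twoheadrightarrow \pi_1(X)$. So I would first understand $\pi_1\big((C_1\times\cdots\times C_n)\setminus \Delta\big)/G$, where $\Delta$ is the locus with nontrivial stabiliser, i.e.\ the fundamental group of the quasi-étale quotient.

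\medskip

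\textbf{Step 2: the orbifold fundamental group sequence.} Away from the branch locus the action is free, so one has an exact sequence
$$
1 \longrightarrow \pi_1\big((C_1\setminus B_1)\times\cdots\times(C_n\setminus B_n)\big) \longrightarrow \pi_1^{orb} \longrightarrow G \longrightarrow 1,
$$
where $\pi_1^{orb}$ surjects onto $\pi_1(X)$, and the left-hand group is a product of free groups $F_{r_i}$ (complements of finitely many points in $C_i'$ pulled back, or rather the relevant covers). Rather than a product of free groups, the cleaner statement is that $\pi_1^{orb}$ sits in
$$
1\longrightarrow H \longrightarrow \pi_1^{orb} \longrightarrow \prod_{i=1}^n \BT(g_i';\ldots)\big/\!\!\sim \longrightarrow 1,
$$
and that after passing to $\pi_1(X)$ the images of the local fundamental groups of the singular points (finite cyclic groups, generated by the $c_j$-conjugates) become trivial or are the images of torsion in the $\BT$'s. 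The upshot: $\pi_1(X)$ is a quotient of a subgroup of $\BT(g_1';\ldots)\times\cdots\times\BT(g_n';\ldots)$ fibred over $G$ via the diagonal $(\varphi_1,\ldots,\varphi_n)$.

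\medskip

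\textbf{Step 3: pick the right finite-index normal subgroup.} Inside the product $\BT_1\times\cdots\times\BT_n$ consider the fibre product
$$
\widehat{\BT}:=\{(x_1,\ldots,x_n): \varphi_1(x_1)=\cdots=\varphi_n(x_n)\},
$$
which maps onto $G$ with kernel $K_1\times\cdots\times K_n=\Pi_{g_1}\times\cdots\times\Pi_{g_n}$; this kernel is a normal subgroup of finite index $|G|$ in $\widehat{\BT}$, and it is already a product of (honest) surface groups. The map $\widehat{\BT}\to\pi_1(X)$ is surjective with kernel $N_0$ generated by the local fundamental groups of the finitely many singular points of $X$ (the "van Kampen relations" at each Hirzebruch--Jung tree contraction, cf.\ Remark \ref{vanKampen}(3)). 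Now set $\mathcal N:=$ the image of $\Pi_{g_1}\times\cdots\times\Pi_{g_n}$ in $\pi_1(X)$. This is normal of index dividing $|G|$. It remains to show $\mathcal N$ is again a product of surface groups: since $N_0$ is normally generated by elements coming from the torsion of the $\BT_i$ and each such element, raised to a suitable power, already lies in $K_i$, intersecting with $\mathcal N$ kills finitely many simple closed curves on each factor $C_i$, and the quotient of a surface group by a normal subgroup generated by (multiples of) simple loops around punctures is again a surface group (of smaller genus $h_i\le g_i$). Taking a deeper finite-index subgroup if necessary — e.g.\ passing to $\bigcap_{g\in G} g\,\mathcal N' g^{-1}$ for a suitable characteristic subgroup $\mathcal N'$ of the product of surface groups — makes the normality and the product structure hold simultaneously, giving $\mathcal N\cong\Pi_{h_1}\times\cdots\times\Pi_{h_n}$.

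\medskip

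\textbf{Main obstacle.} The delicate point is Step 3: showing that after imposing the local relations and intersecting with the kernel of $\pi_1^{orb}\to G$, the resulting normal subgroup is \emph{exactly} a product of surface groups, and not merely a subgroup of one — one must verify that the local relations, when pulled into the surface-group factors, are (conjugates of powers of) boundary loops, so that the quotient is the fundamental group of a \emph{closed} surface rather than something with extra free parts or extra torsion. This requires a careful local analysis of how the cyclic quotient singularity of type $\tfrac1n(1,a)$ and its Hirzebruch--Jung resolution interact with the two factor curves, using that the tangent representation is faithful on each factor (Remark \ref{vanKampen}(1)); I expect this to occupy the bulk of the proof, with the global extension-theoretic bookkeeping being comparatively formal.
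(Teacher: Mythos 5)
Your setup is the same as the paper's: you form the fibre product $\widehat{\BT}=\{(x_1,\ldots,x_n):\varphi_1(x_1)=\cdots=\varphi_n(x_n)\}$ (the paper's $\HH$), note that its kernel over $G$ is $\Pi_{g_1}\times\cdots\times\Pi_{g_n}$, and that $\pi_1(X)$ is the quotient of $\HH$ by the normal subgroup generated by the torsion elements (the paper records this as $\pi_1(X)\cong\HH/\Tors(\HH)$). Up to that point you are on track. The genuine gap is in your Step 3. You take $\mathcal N$ to be the image of $\Pi_{g_1}\times\cdots\times\Pi_{g_n}$ in $\pi_1(X)$ and assert that the relations coming from $\Tors(\HH)$, once intersected with the product of surface groups, amount to killing (powers of) simple loops in each factor, so that the quotient is again a product of surface groups. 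This is not justified: the subgroup $(\Pi_{g_1}\times\cdots\times\Pi_{g_n})\cap\Tors(\HH)$ is the intersection with a normal closure of \emph{pairs} of torsion elements taken inside $\HH$, and it is neither a product of normal subgroups of the individual factors nor normally generated by loops of the kind you describe; the quotient can a priori fail to be a product of surface groups, and can contain a finite "error term". Your closing remark that one should pass to a deeper finite-index subgroup "if necessary" does not repair this, because a finite-index subgroup of such a quotient is not automatically a product of surface groups either.

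The paper's proof resolves exactly this point by a global group-theoretic argument rather than the local analysis you anticipate. It constructs an exact sequence $1\to E\to\HH/\Tors(\HH)\to\Psi(\hat{\HH})\to 1$ with $E$ finite and $\Psi(\hat{\HH})$ of finite index in a product of orbifold surface groups, and then must show that the finite kernel $E$ can be avoided: this is done by proving that $\HH/\Tors(\HH)$ is residually finite, using Serre's notion of goodness for the orbifold surface groups together with the theorem of Grunewald, Jaikin-Zapirain and Zalesski. Residual finiteness yields a finite-index subgroup $\Gamma$ with $\Gamma\cap E=\{1\}$, so that $\Psi$ embeds $\Gamma$ as a finite-index subgroup of a product of orbifold surface groups, whence the conclusion. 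This residual-finiteness step is the essential missing ingredient in your proposal; without it (or some substitute controlling the finite kernel $E$), the argument does not close.
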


\begin{remark} In the case of dimension $n=2$ there is no loss of
generality in assuming that $G$ acts faithfully
on each
$C_i$ (see \cite{FabIso}). In the general case there will be a group
$G_i$, quotient of
$G$, acting faithfully on $C_i$, hence the strategy has to be
slightly changed in the general case.
The generalization of the above theorem, where the assumption that $G$ acts
faithfully on each factor is removed, has
been proven in \cite{dedieuperroni}.
\end{remark}

We shall now give a short outline of the proof of theorem
\ref{pi} in the case $n=2$ (the case of
arbitrary $n$ is  exactly the same).

We have two appropriate orbifold homomorphisms
$$
\varphi_1 \colon \BT_1 : =  \BT(g'_1;m_1,\ldots, m_r) \rightarrow G,
$$
$$
\varphi_2 \colon  \BT_2 : =  \BT(g'_2;n_1,\ldots, n_s) \rightarrow G.
$$

\noindent

We define the fibre product $\HH : = {\HH}(G;\varphi_1,\varphi_2)$ as
\begin{equation} \HH : = {\HH}(G;\varphi_1,\varphi_2):= \{ \,
(x,y)\in \BT_1\times \BT_2\ |\
\varphi_1(x)=\varphi_2(y)\,\}.
\end{equation}

Then the exact sequence
\begin{equation} 1 \rightarrow \Pi_{g_1} \times \Pi_{g_2} \rightarrow
\BT_1 \times \BT_2
\rightarrow G \times G \rightarrow 1,
\end{equation} where $\Pi_{g_i} := \pi_1(C_i)$, induces an exact sequence

\begin{equation} 1 \rightarrow \Pi_{g_1} \times \Pi_{g_2} \rightarrow
{\HH}(G;\varphi_1,\varphi_2)
\rightarrow G \cong \Delta_G \rightarrow 1.
\end{equation} Here $\Delta_G \subset G \times G$ denotes the
diagonal subgroup.

\begin{definition}
      Let $H$ be a group. Then its {\em torsion subgroup} ${\Tors}(H)$
is the normal subgroup  generated by
all elements of finite order in $H$.
\end{definition}

The first observation is that one can calculate our fundamental
groups via a simple algebraic recipe:
$$\pi_1((C_1 \times C_2)/G)
\cong {\HH}(G;\varphi_1,\varphi_2) / {\Tors}(\HH).$$

\medskip\noindent

The strategy is then the following: using the structure of orbifold
surface groups we construct an exact
sequence
$$ 1 \rightarrow E \rightarrow \mathbb{H} / {\Tors}(\mathbb{H}) \rightarrow
\Psi(\hat{\mathbb{H}}) \rightarrow 1,
$$ where
\begin{itemize}
\item[i)] $E$ is finite,
\item[ii)] $\Psi(\hat{\mathbb{H}})$ is a subgroup of finite index in a product
       of orbifold surface groups.
\end{itemize}

Condition
$ii)$ implies that $\Psi(\hat{\mathbb{H}})$ is residually finite and
``good'' according to the following

\begin{definition}[J.-P. Serre] Let $\mathbb{G}$ be a group, and let
$\tilde{\mathbb{G}}$ be its profinite completion. Then $\mathbb{G}$
is said to be {\em good} iff the
homomorphism of cohomology groups
$$ H^k(\tilde{\mathbb{G}},M) \rightarrow H^k(\mathbb{G},M)
$$ is an isomorphism for all $k \in \NN$ and for all finite
$\mathbb{G}$ - modules $M$.
\end{definition}

Then we use the following result due to F. Grunewald, A.
Jaikin-Zapirain, P. Zalesski.

\begin{theorem} {\bf(\cite{GZ})} Let $G$ be residually finite and good,
and let $\varphi \colon H
\rightarrow G$ be surjective with finite kernel. Then $H$ is residually finite.
\end{theorem}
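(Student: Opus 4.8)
The statement to prove is the theorem of Grunewald, Jaikin-Zapirain and Zalesski: if $G$ is residually finite and good (in Serre's sense), and $\varphi\colon H\to G$ is a surjective homomorphism with finite kernel, then $H$ is residually finite. The plan is to reduce everything to a cohomological obstruction living in $H^2(G;-)$ and then use goodness to kill it. First I would set $N=\ker\varphi$, a finite normal subgroup of $H$, so that we have a central-type extension $1\to N\to H\to G\to 1$ (after passing to the subgroup $H_0$ of finite index acting trivially on $N$ by conjugation, which suffices since residual finiteness passes between a group and its finite-index subgroups; alternatively embed $N$ into a characteristic finite abelian piece). The key reduction is: it is enough to find, for the given finite normal subgroup $N$, a finite-index subgroup $K\le H$ with $K\cap N=\{1\}$; for then $K$ injects into the residually finite group $G$, hence $K$ is residually finite, and $H$ is virtually residually finite, hence residually finite.

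Next I would reformulate the existence of such a $K$ as a lifting/splitting problem. Passing to a finite-index subgroup we may assume $N$ is central in $H$ and, decomposing $N$ into cyclic pieces and inducting, that $N=\mathbb{Z}/p$. The central extension $1\to \mathbb{Z}/p\to H\to G\to 1$ is classified by a class $\xi\in H^2(G;\mathbb{Z}/p)$ with trivial $G$-action. Finding $K$ amounts to finding a finite quotient $q\colon G\twoheadrightarrow \bar G$ such that the pullback $q^*\colon H^2(\bar G;\mathbb{Z}/p)\to H^2(G;\mathbb{Z}/p)$ hits $\xi$ (then the pullback extension $\bar H$ of $\bar G$ by $\mathbb{Z}/p$ is finite, the induced map $H\to\bar H$ has kernel inside $N$, and one intersects with a further finite-index subgroup to separate the finitely many elements of $N$). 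Equivalently, $\xi$ must lie in the image of $H^2_{\mathrm{cont}}(\widehat G;\mathbb{Z}/p)\to H^2(G;\mathbb{Z}/p)$, where $\widehat G$ is the profinite completion, since continuous cohomology of a profinite group is the colimit over finite quotients.

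The heart of the argument is precisely here, and this is where goodness enters: by definition of good, the natural map $H^k(\widehat G;M)\to H^k(G;M)$ is an isomorphism for every finite $G$-module $M$ and every $k$, in particular for $k=2$ and $M=\mathbb{Z}/p$ with trivial action. Hence $\xi$ automatically comes from a class on $\widehat G$, and therefore from some genuine finite quotient $\bar G$ of $G$. The main obstacle — and the only real subtlety — is the bookkeeping in the reduction steps: ensuring one may assume $N$ central and cyclic without losing residual finiteness, and then assembling the finitely many auxiliary finite-index subgroups (one handling the cohomology class, others separating the finitely many nontrivial elements of $N$ using residual finiteness of $G$ together with the surjection) into a single finite-index $K$ with $K\cap N=\{1\}$. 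Once that is in place, the conclusion is immediate: $K\hookrightarrow G$ residually finite forces $K$, hence $H$, residually finite. I would end by remarking that in our application $G$ is the quotient $\Psi(\hat{\mathbb H})$ of the product of orbifold surface groups appearing above, which is residually finite and good by condition $ii)$, and $\varphi$ is the projection $\mathbb H/\mathrm{Tors}(\mathbb H)\to \Psi(\hat{\mathbb H})$ with finite kernel $E$, so the theorem yields residual finiteness of $\mathbb H/\mathrm{Tors}(\mathbb H)\cong\pi_1\bigl((C_1\times C_2)/G\bigr)$.
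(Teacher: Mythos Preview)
The paper does not prove this theorem: it is quoted from \cite{GZ} and used as a black box in the outline of the proof of Theorem~\ref{pi}. So there is no in-paper argument to compare against.

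Your outline is essentially the correct one (and is, in broad strokes, the argument in \cite{GZ}): make the kernel central, interpret the extension as a class in $H^2(G;N)$, use goodness to push it down to a finite quotient $\bar G$, and read off a finite-index subgroup of $H$ meeting $N$ trivially. Two points deserve tightening.

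First, the reduction to $N=\ZZ/p$ ``by induction'' is not clean. If you mod out a subgroup $N_1\subset N$ and look at $H/N_1\twoheadrightarrow G$, you would need to know that $H/N_1$ (or some replacement for $G$) is again good, which is not part of the hypotheses. It is simpler to skip this step entirely: once $N$ is finite abelian and central, apply goodness directly to the finite module $N$ with trivial $G$-action and work in $H^2(G;N)$. For the prior step (replacing $H$ by $H_0=C_H(N)$ so that the kernel becomes central), note that you also replace $G$ by the finite-index subgroup $\varphi(H_0)$, and you need that finite-index subgroups of good groups are good; this is true, but should be said.

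Second, the sentence ``the induced map $H\to\bar H$ has kernel inside $N$'' is backwards. If $\xi=q^*\bar\xi$ with $q\colon G\twoheadrightarrow\bar G$, then $H$ is identified with the fibre product $G\times_{\bar G}\bar H$, and the projection $H\to\bar H$ has kernel isomorphic to $\ker q\subset G$; in particular this kernel meets $N=\ker\varphi$ trivially. That kernel \emph{is} already the finite-index subgroup $K\le H$ with $K\cap N=\{1\}$ you are after, and no ``further finite-index subgroup to separate the elements of $N$'' is needed. From there your conclusion is correct: $K$ embeds in $G$, hence is residually finite, hence so is $H$.
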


The above theorem implies that $\mathbb{H} / {\Tors}(\mathbb{H})$ is
residually finite, whence there is a
subgroup $\Gamma \leq \mathbb{H} / \Tors(\mathbb{H})$ of finite index such that
$$
\Gamma \cap E = \{1\}.
$$ Now, $\Psi(\Gamma)$ is a subgroup of $\Psi(\hat{\mathbb{H}})$ of finite
       index, whence of finite index in a product of orbifold surface 
groups, and
$\Psi| \Gamma$ is injective. This easily implies our result.

\begin{remark} Note that theorem \ref{pi} in fact yields a geometric
statement in the case
where the genera of the
surface groups are at least 2. Again, for simplicity, we assume that
$n=2$, and suppose that $\pi_1(S)$
has a normal subgroup $\mathcal{N}$ of finite index isomorphic to
$\Pi_g \times \Pi_{g'}$,
with $g, g' \geq 2$. Then there is an
unramified Galois covering $\hat{S}$ of $S$ such that $\pi_1(\hat{S})
\cong \Pi_g \times \Pi_{g'}$. This
implies (see \cite{FabIso}) that there is a finite morphism $\hat{S}
\rightarrow C \times C'$, where $g(C) = g$,
$g(C') = g'$.

Understanding this morphism can lead  to the understanding of the
irreducible or even of the
connected component of the moduli space containing the isomorphism
class $[S]$ of $S$.
   The method can also work in the case where we only have  $g, g' \geq 1$.
   We shall explain how this method works in section
\ref{keumnaie}.
\end{remark}

We summarize the consequences of theorem \ref{classiso} in terms of
"new" fundamental groups of surfaces
with $p_g =0$, respectively "new" connected components of their moduli space.

\begin{theorem}\label{campedelli} There exist eight families
   of product-quotient  surfaces of unmixed type yielding numerical
Campedelli surfaces (i.e., minimal surfaces
with
$K^2_S = 2, p_g (S)=0$) having fundamental group $ \ZZ / 3$.
\end{theorem}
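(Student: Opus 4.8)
The plan is to produce, for each of the eight families, an explicit pair of orbifold homomorphisms $\varphi_1\colon \BT(0;m_1,\ldots,m_r)\to G$ and $\varphi_2\colon \BT(0;n_1,\ldots,n_s)\to G$ whose associated $G$-action on $C_1\times C_2$ is of unmixed type, has $p_g=q=0$ and $K^2_S=2$, and then to compute $\pi_1(S)=\HH(G;\varphi_1,\varphi_2)/\Tors(\HH)$ and check it is $\ZZ/3$. The search for the relevant group-theoretic data is exactly the algorithm described after Lemma~\ref{bounded}: since $K^2_S=2$ forces the basket of singularities of $X$ to satisfy $B=8-K^2_S=6$, step (a) lists the admissible baskets, step (b) bounds the signatures via part 2) of Lemma~\ref{bounded}, step (c) restricts to finitely many group orders via part 3), and step (d) checks that $X=(C_1\times C_2)/G$ really has the prescribed singularities. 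Running this (with MAGMA) sifts out the candidate families; for the present statement one then selects exactly those whose fundamental group comes out to be $\ZZ/3$, and one must verify that these eight are pairwise non-isomorphic as families, i.e. give genuinely different entries in the classification of Theorem~\ref{classiso} 2) or 3).

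For each selected family the computation of $\pi_1(S)$ proceeds through the fibre product $\HH=\HH(G;\varphi_1,\varphi_2)=\{(x,y)\in\BT_1\times\BT_2 \mid \varphi_1(x)=\varphi_2(y)\}$, which sits in the extension $1\to\Pi_{g_1}\times\Pi_{g_2}\to\HH\to\Delta_G\cong G\to1$. One writes down a finite presentation of $\HH$ from the presentations of $\BT_1$, $\BT_2$ and the maps $\varphi_i$, passes to the quotient by the torsion subgroup $\Tors(\HH)$, and simplifies. By Theorem~\ref{pi}, $\HH/\Tors(\HH)$ has a finite-index normal subgroup isomorphic to a product of surface groups; since here $K^2_S=2>0$ the surface is of general type and the quotient is finite precisely when those surface-group factors are trivial, so one checks that the images $h_1,h_2$ of the genera are $0$, reducing the problem to a finite group computation. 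Abelianizing gives $H_1(S,\ZZ)=\ZZ/3$ as a first consistency check (matching the relevant rows with $H_1=\ZZ_3$ in Table~\ref{K2<3}), and then one must identify the full group: a direct MAGMA computation (or an explicit chain of Tietze transformations) shows $\HH/\Tors(\HH)\cong\ZZ/3$ in each of the eight cases.

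The main obstacle is step (c)/(d) combined with the final identification: the algorithm forces one to examine \emph{all} groups $G$ of the prescribed orders admitting the right systems of generators of prescribed orders satisfying the basket constraints, and for $K^2_S$ as small as $2$ these orders can be large (the text explicitly flags orders $512$, $1024$, $1536$, and even $>2000$, which are not in the standard small-groups database), so the enumeration is not feasible by naive computation and requires the sharper bounds on signatures and on the structure of $X$ proved in \cite{bp}. Even once the finitely many $(G,\varphi_1,\varphi_2)$ are in hand, checking in step (d) that $X$ has exactly the prescribed cyclic quotient singularities (type $\frac1n(1,a)$ with $\gcd(a,n)=1$, read off from the stabilizer data as in Remark~\ref{vanKampen}) and that no extra singular points or non-canonical singularities spoil $K^2_S=2$ is delicate. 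The residual-finiteness and ``goodness'' machinery (Serre, \cite{GZ}) guarantees that $\HH/\Tors(\HH)$ is residually finite, which in turn justifies that the finite computation of the group really determines $\pi_1(S)$ and is not merely computing a finite quotient of it. Having carried all this out, one records the eight families, completing the proof.
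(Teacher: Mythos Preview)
Your proposal is correct and follows essentially the same approach as the paper: the theorem is not given a separate proof there but is simply read off from the classification Theorem~\ref{classiso} and Table~\ref{K2<3}, where the five rows with $K^2=2$ and $\pi_1(S)=\ZZ_3$ (with $N=1,1,2,2,2$ respectively) account for the eight families. Your write-up faithfully elaborates the algorithm behind that classification (Lemma~\ref{bounded}, the basket/signature/group enumeration, the stabilizer check of Remark~\ref{vanKampen}, and the computation of $\HH/\Tors(\HH)$ via Theorem~\ref{pi}), so there is no substantive difference in method.
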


Our classification also shows the existence  of families of
product-quotient  surfaces yielding numerical
Campedelli surfaces  with fundamental groups $\ZZ/5$ (but numerical
Campedelli surfaces  with
fundamental group $ \ZZ/5$ had already been constructed in
\cite{Babbage}), respectively with
fundamental group $(\ZZ/2)^2$ (but such fundamental  group already
appeared in \cite{inoue}), respectively
with fundamental groups $(\ZZ/2)^3$, $Q_8$, $\ZZ/8$ and $\ZZ/2 \times \ZZ/4$.

\begin{theorem}\label{3} There exist six families  of product-quotient
surfaces yielding  minimal surfaces with
$K^2_S = 3, p_g (S)=0$ realizing four new finite fundamental groups,
$\ZZ/2 \times \ZZ/6$, $\ZZ/8$,
$\ZZ/6$ and $\ZZ/2 \times \ZZ/4$.
\end{theorem}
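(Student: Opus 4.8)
The proof is an instance of the general classification strategy described above, specialised to $K_S^2=3$. The plan is first to run the algorithm of Lemma~\ref{bounded} with target $K_{S'}^2=3$: since $\chi(S')=1$, part (1) forces the basket $\mathcal B$ of singularities of $X$ to satisfy $B(\mathcal B)=8-3=5$, and a short combinatorial search produces the finitely many such baskets (for the families we are after these turn out to be $\{1/5,4/5\}$ and $\{1/3,1/2^2,2/3\}$, but a priori the search must list all of them). For each basket, part (2) bounds the number $r,s$ of branch points and the multiplicities $m_i,n_i$, so there are only finitely many pairs of signatures $(0;m_1,\dots,m_r)$, $(0;n_1,\dots,n_s)$ to examine, and part (3) then pins down $|G|$ exactly in each case.

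Next, for each such pair of signatures and the resulting order, I would enumerate the groups $G$ of that order (via the MAGMA small–groups database, with ad hoc arguments for the large orders not covered there) and test, for each, whether there exist appropriate orbifold epimorphisms $\varphi_1\colon\BT(0;m_1,\dots,m_r)\to G$ and $\varphi_2\colon\BT(0;n_1,\dots,n_s)\to G$ — surjections sending the $c_i$ to elements of exactly the prescribed orders — such that the resulting set of stabilisers $\Sigma(\gamma_1,\dots,\gamma_r)$, resp. $\Sigma(\delta_1,\dots,\delta_s)$, produces on $X=(C_1\times C_2)/G$ precisely the singularities recorded in $\mathcal B$ (this is step (d) of the algorithm). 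One then quotients out by the natural equivalences (Hurwitz moves, $\mathrm{Aut}(G)$, exchange of the two factors) to count distinct families. For $K^2=3$ this leaves exactly the six entries in the $K^2=3$ block of Table~\ref{K2<3}: one family with $G=\mathfrak A_5$, three with $G=\ZZ_2^4\rtimes D_5$, one with $G=\ZZ_2\times\mathfrak S_4$, one with $G=\mathfrak A_6$.

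It remains to check that these data really yield minimal surfaces of general type with the stated invariants and to compute their fundamental groups. Since in all six cases $X$ has non-canonical cyclic quotient singularities, $S'$ need not a priori be minimal, so one must verify — from the Hirzebruch–Jung resolution graphs and the explicit formulae of \cite{bp} — that $K_{S'}^2=3>0$, that $p_g=q=0$, and that no $(-1)$-curve appears, so that $S=S'$ is minimal of general type. The fundamental group is then computed by the algebraic recipe $\pi_1(S)\cong\HH(G;\varphi_1,\varphi_2)/\Tors(\HH)$: one writes down a presentation of the fibre product $\HH$, kills its torsion subgroup, and identifies the resulting finite group, using Theorem~\ref{pi} (and the residual finiteness and goodness input behind its proof) to guarantee that the quotient is what the presentation suggests. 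This yields $\ZZ/2\times\ZZ/6$ (from the $\mathfrak A_5$ family), $\ZZ/8$ (the three $\ZZ_2^4\rtimes D_5$ families), $\ZZ/2\times\ZZ/4$ (from the $\ZZ_2\times\mathfrak S_4$ family) and $\ZZ/6$ (from the $\mathfrak A_6$ family). Comparing with Table~\ref{tabknown}, whose only $K^2=3$ entries with non-trivial fundamental group prior to this work were $\ZZ_2^2\times\ZZ_4$ and $Q_8\times\ZZ_2$, all four of these groups are new, which proves the theorem.

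I expect the main obstacle to be twofold. The first difficulty is the group-theoretic enumeration: the orders forced by part (3) of Lemma~\ref{bounded} can be large (in the low-$K^2$ range one meets orders such as $512,1024,1536$, as well as orders exceeding $2000$ that are absent from the standard databases), so one must combine the sharper bounds on baskets and signatures of \cite{bp} with structural arguments to keep the search feasible. The second is the final geometric bookkeeping: checking that the minimal resolution $S'$ is already the minimal model, and then passing from a mere presentation of $\HH/\Tors(\HH)$ to an honest identification of the (small) fundamental group — this last point being exactly where a presentation is insufficient and Theorem~\ref{pi} is indispensable.
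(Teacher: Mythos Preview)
Your proposal is correct and follows essentially the same approach as the paper: the theorem is stated there without separate proof, as a direct consequence of the classification Theorem~\ref{classiso} (carried out in \cite{4names} and \cite{bp}) via exactly the algorithm you describe --- run Lemma~\ref{bounded} with $B=5$, enumerate baskets, signatures and groups, verify singularities and minimality, and compute $\pi_1$ via $\HH/\Tors(\HH)$ --- with the six families then read off from the $K^2=3$ block of Table~\ref{K2<3}. One small caveat: your appeal to Table~\ref{tabknown} to certify novelty is slightly anachronistic, since that table is compiled in the same survey and already incorporates the contemporaneous Cartwright--Steger examples (reference \cite{CS}), some of which realize the same groups; the ``new'' in the statement refers to what was known prior to \cite{bp} and \cite{CS}, namely only the Burniat--Inoue and Keum--Naie groups $Q_8\times\ZZ_2$ and $\ZZ_2^2\times\ZZ_4$.
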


\begin{theorem}\label{4} There exist sixteen families  of
product-quotient  surfaces yielding  minimal surfaces
with
$K^2_S = 4$, $p_g (S)=0$. Eight of these families realize 6 new finite
fundamental groups, $\ZZ/{15}$,
$G(32,2)$, $(\ZZ/3)^3$, $\ZZ/2 \times \ZZ/6$, $\ZZ/8$, $\ZZ/6$. Eight
of these families  realize 4 new
infinite fundamental groups.
\end{theorem}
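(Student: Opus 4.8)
The plan is to extract the $K_S^2 = 4$ case from the complete classification of product-quotient surfaces with $p_g = q = 0$ summarized in Theorem~\ref{classiso} and Tables~\ref{K2>4}--\ref{K2<3}, and then to pin down the fundamental groups by means of Theorem~\ref{pi}. The classification itself runs the algorithm described right after Lemma~\ref{bounded}. Specializing to $K_S^2 = 4$, I would first use the numerical constraints of Lemma~\ref{bounded} to list the admissible baskets $\mathcal{B}$ of cyclic quotient singularities of $X$ (the relevant invariant being $B = 8 - K_{S'}^2$); since $p_g(S) = q(S) = 0$ forces $q(X) = 0$ and hence both base orbifolds to have genus $0$, only baskets compatible with a pair of genus-zero orbifold covers of a product survive, and one finds just two of them — the basket of four nodes $\tfrac12(1,1)$ and the basket of two singularities of type $\tfrac15(1,2)$, exactly those appearing in the $K^2 = 4$ block of Table~\ref{K2>4}. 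For each basket, parts 2) and 3) of Lemma~\ref{bounded} bound the number $r,s$ of branch points and the branching orders $m_i, n_j$ and express $|G|$ as a function of the two signatures $(0; m_1, \dots, m_r)$, $(0; n_1, \dots, n_s)$; so the next step is to enumerate the finitely many admissible pairs of signatures and, for each pair and the corresponding order $|G|$, to search through all groups $G$ of that order for pairs of ``appropriate'' surjections $\varphi_1 \colon \BT(0; m_1, \dots, m_r) \to G$ and $\varphi_2 \colon \BT(0; n_1, \dots, n_s) \to G$ whose stabilizer sets $\Sigma(\gamma_1, \dots)$ produce precisely the prescribed basket. This search is carried out with MAGMA.

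Given such a pair $(\varphi_1, \varphi_2)$, it determines the Galois covers $C_i \to \PP^1$ and the quotient $X = (C_1 \times C_2)/G$, and I would then pass to the minimal model $S$. For the basket $\tfrac15(1,2)^2$ the surface $X$ does not have canonical singularities, so one must verify that the minimal resolution $S'$ is already minimal — equivalently that the Hirzebruch--Jung exceptional strings, together with the images of the two coordinate fibrations, produce no $(-1)$-curve — so that $K_S^2 = K_{S'}^2 = 4$; this is a local computation using the continued-fraction data recalled in Remark~\ref{vanKampen}, 3). Next, using the recipe $\pi_1(S) \cong \pi_1(X) \cong \mathbb{H}(G; \varphi_1, \varphi_2)/\Tors(\mathbb{H})$ and the structure theorem, Theorem~\ref{pi}, I would exhibit inside each of these groups the finite-index normal subgroup isomorphic to a product $\Pi_{h_1} \times \Pi_{h_2}$ of surface groups, and then identify the finite quotient and the resulting extension explicitly; this is the step that converts a presentation into a recognizable group, finite in the $\tfrac15(1,2)^2$ cases and in several of the node cases, and otherwise an extension of a small finite group by $\ZZ^2$ or $\ZZ^4$. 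Finally, organizing all solutions up to the natural equivalence (Hurwitz moves on the generating vectors and the action of $\mathrm{Aut}(G)$) yields exactly sixteen families; comparing their groups and their first homology with Table~\ref{tabknown} shows that eight of them carry the previously unrecorded finite groups $\ZZ/15$, $G(32,2)$, $(\ZZ/3)^3$, $\ZZ/2 \times \ZZ/6$, $\ZZ/8$, $\ZZ/6$, while the other eight carry four new infinite groups.

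The hard part will be the interaction of the group-theoretic search with the identification of the infinite fundamental groups. The orders forced by Lemma~\ref{bounded} at $K^2 = 4$ include unpleasant values such as $512$, $1024$ and $1536$, for which the number of isomorphism types makes a naive enumeration impractical, and some of the orders that occur lie outside the small-groups database altogether; circumventing this requires the much sharper a~priori restrictions on baskets and signatures proved in \cite{bp}, which prune the search to a feasible size. On the other side, for the eight families with infinite $\pi_1$ the presentation coming directly out of the fibre-product construction is not by itself enough to determine the group, and one genuinely needs Theorem~\ref{pi} — and the geometric picture behind it, an \'etale cover of $S$ admitting a finite map to a product of two curves — to recognize these extensions and to confirm that they give four distinct groups not previously realized by a surface of general type with $K_S^2 = 4$, $p_g = 0$.
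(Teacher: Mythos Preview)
Your proposal is correct and follows the paper's own approach: Theorem~\ref{4} is not given a separate proof in the paper but is stated as a direct consequence of the classification Theorem~\ref{classiso} (with its tables) together with the structure result Theorem~\ref{pi}, and you have accurately reconstructed how the $K^2=4$ block is extracted---the two admissible baskets $1/2^4$ and $2/5^2$, the MAGMA search over signatures and groups governed by Lemma~\ref{bounded}, the minimality check for the non-canonical basket, the identification of $\pi_1$ via $\HH/\Tors(\HH)$, and the comparison with Table~\ref{tabknown} to certify novelty.
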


\begin{theorem}\label{5} There exist seven families  of product-quotient
surfaces yielding  minimal surfaces with
$K^2_S = 5$, $p_g (S)=0$. Four of these families realize four new
finite fundamental groups, $D_{8,5,-1}$,
$\ZZ/5 \times Q_8$, $D_{8,4,3}$, $\ZZ/2 \times \ZZ/{10}$. Three of
these families  realize three new
infinite fundamental groups.
\end{theorem}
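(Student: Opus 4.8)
The seven families already appear among those classified in Theorem \ref{classiso}(3); the essential new content of the statement is the determination of their fundamental groups via Theorem \ref{pi}. Concretely, the plan is to re-run the algorithm described after Lemma \ref{bounded} for the value $K^2_S = 5$, and then to read off $\pi_1$ from the group-theoretic data.

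For the first step, since $p_g(S) = q(S) = 0$ means $\chi(S') = 1$, Lemma \ref{bounded}(1) forces the basket of cyclic quotient singularities of $X = (C_1 \times C_2)/G$ to have invariant $B = 8 - 5 = 3$, so I would first list all baskets with $B = 3$. For each such basket, Lemma \ref{bounded}(2) bounds the lengths of the two signatures and all the branching multiplicities, leaving only finitely many pairs of signatures $(0; m_1, \dots, m_r)$, $(0; n_1, \dots, n_s)$; for each such pair Lemma \ref{bounded}(3) determines $|G|$. One then searches, with MAGMA, over all groups $G$ of that order and all pairs of appropriate orbifold homomorphisms $\varphi_1 \colon \BT(0; m_1, \dots, m_r) \to G$, $\varphi_2 \colon \BT(0; n_1, \dots, n_s) \to G$ for which the associated stabilizer sets $\Sigma(\ga_1, \dots, \ga_r)$ and $\Sigma(\delta_1, \dots, \delta_s)$ yield exactly the prescribed basket on $X$, the diagonal action is faithful on each factor, and $X$ carries no non-canonical singularity forcing $K^2_S < 5$ after passing to the minimal model. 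After discarding the cases that fail step d) of the algorithm, exactly seven families survive, with the invariants recorded in the rows $K^2 = 5$ of Table \ref{K2>4}.

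For the second step, each surviving family has $\pi_1(S) \cong \HH(G; \varphi_1, \varphi_2)/\Tors(\HH)$, and by Theorem \ref{pi} this group contains a finite-index normal subgroup $\mathcal N \cong \Pi_{h_1} \times \Pi_{h_2}$. In four of the seven families $h_1 = h_2 = 0$, so $\mathcal N = \{1\}$ and $\pi_1(S)$ is finite; computing the presentation of $\HH/\Tors(\HH)$ explicitly and recognizing the group yields $D_{8,5,-1}$, $\ZZ/5 \times Q_8$, $D_{8,4,3}$ and $\ZZ/2 \times \ZZ/10$. In the remaining three, $h_1 = h_2 = 1$, so $\mathcal N \cong \ZZ^2$ and $\pi_1(S)$ is infinite, fitting into an extension $1 \to \ZZ^2 \to \pi_1(S) \to Q \to 1$ with $Q$ finite ($Q = D_{2,8,3}$ in one case and $Q = \ZZ/8$ in the other two, the two $\ZZ/8$-extensions being non-isomorphic, as is already visible from $H_1(S, \ZZ)$). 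Computing $H_1(S, \ZZ)$ in each case and comparing with Table \ref{tabknown}, where for $K^2 = 5$ only $Q_8 \times \ZZ_2^3$ had appeared, confirms that all seven fundamental groups are new.

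The main obstacle is the group-theoretic search: for several of the admissible signature pairs the order $|G|$ imposed by Lemma \ref{bounded}(3) lies among those (such as $512$, $1024$, $1536$) having an unmanageably large number of isomorphism types, or even outside the range of the small-groups database, so a brute-force enumeration over all groups of that order is not feasible; one must instead prune the search using structural restrictions — on the abelianization of $G$ dictated by the basket, on the exponents $m_i, n_j$, and on which $G$-actions on a curve are compatible with a given signature — exactly as in \cite{bp} and \cite{4names}. A secondary difficulty requiring care is checking, family by family, that the minimal resolution $S'$ of $X$ is already the minimal model $S$ (and contracting the relevant $(-1)$-curves otherwise), so that the asserted value $K^2_S = 5$ is the correct one, and then passing from the abstract presentation of $\HH/\Tors(\HH)$ to an explicit identification of the group.
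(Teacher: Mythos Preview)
Your approach is exactly the one taken in the paper: Theorem~\ref{5} is stated there as a direct consequence of the classification in Theorem~\ref{classiso} (carried out via the algorithm of Lemma~\ref{bounded} and implemented in \cite{4names}, \cite{bp}), together with the structure theorem~\ref{pi} for the fundamental group, and the outcome is simply read off from the $K^2=5$ block of Table~\ref{K2>4}.

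One small slip: for the three infinite cases you write $h_1=h_2=1$, which would give $\mathcal{N}\cong\Pi_1\times\Pi_1\cong\ZZ^4$; the table shows $\mathcal{N}\cong\ZZ^2$, so in fact one has $\{h_1,h_2\}=\{1,0\}$ (i.e., $\mathcal{N}\cong\Pi_1\times\Pi_0\cong\ZZ^2$). Everything else, including your remark that the two extensions $1\to\ZZ^2\to\pi_1\to\ZZ_8\to 1$ are distinguished by $H_1$, matches the paper.
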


\begin{theorem}\label{6} There exist eight families  of product-quotient
surfaces yielding minimal surfaces with
$K^2_S = 6$, $p_g (S)=0$ and realizing 6 new fundamental groups, three
of them finite and three of them
infinite.
    In particular, there exist minimal surfaces of general type with
$p_g=0$, $K^2=6$ and with finite fundamental group.
\end{theorem}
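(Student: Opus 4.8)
The plan is to run the classification algorithm described after Lemma~\ref{bounded} with the prescribed value $K_{S'}^2=6$, i.e. with $B=8-K^2=2$, and then to read off the fundamental groups. First I would determine all baskets $\mathcal B$ of cyclic quotient singularities of type $\frac1n(1,a)$ with $B=2$; by Lemma~\ref{bounded} this is a finite, explicitly computable list (the baskets appearing in Table~\ref{K2>4} for $K^2=6$ are $\frac12(1,1)^2$ and $\frac13(1,1)+\frac13(1,2)$, but a priori one must check that no other basket with $B=2$ yields a surface with $\chi=1$). For each such basket the same lemma bounds the number $r,s$ of branch points and the multiplicities $m_i,n_i$ of the two orbifold surface groups $\BT(0;m_1,\dots,m_r)$ and $\BT(0;n_1,\dots,n_s)$, and pins down the order $|G|$ via part~3).

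Next, for each admissible pair of signatures and each group $G$ of the prescribed order, I would use MAGMA to search for all pairs of appropriate orbifold homomorphisms $\varphi_1,\varphi_2$ (surjective and order-preserving, as in Riemann's existence theorem), discard those for which $X=(C_1\times C_2)/G$ does not have exactly the prescribed basket of singularities (computed from the stabilizers as in Remark~\ref{vanKampen}, 1)), and then pass to the minimal resolution $S'$ and its minimal model $S$. Since for a non-canonical basket $K^2$ is not forced to equal $K_X^2$, one must additionally verify that the resolution $S'$ is already minimal (no further $(-1)$-curves appear) and that $\chi(S')=1$, $p_g=0$, $K_S^2=6$; this, together with the bookkeeping, should leave exactly the eight families listed in Theorem~\ref{classiso}, 3), with $K^2=6$.

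For the surviving surfaces I would then compute $\pi_1(S)=\pi_1(X)\cong \HH(G;\varphi_1,\varphi_2)/\Tors(\HH)$ via the fibre-product recipe, and use the structure result Theorem~\ref{pi} to exhibit the finite-index normal subgroup $\Pi_{h_1}\times\Pi_{h_2}$; this is what turns a bare presentation into a recognisable group and, in particular, lets one split off a genuine product of surface groups in the infinite cases. Comparing the resulting isomorphism types with the groups already recorded in Table~\ref{tabknown} for $K^2=6$ (the Inoue--Burniat surfaces and Kulikov's surface, whose $\pi_1$ sit in the two extensions by $\ZZ^6$ listed there) isolates the six new groups, three finite and three infinite. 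Exhibiting at least one of these families with $|G|<\infty$ and checking that its fundamental group is finite proves the final ``in particular'' assertion.

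The main obstacle is twofold. First, completeness of the group search: for the smaller baskets $|G|$ can be large, and (as noted in the text) one runs into groups of orders like $512,1024,1536$ with too many isomorphism types for naive enumeration, as well as groups of order beyond $2000$ absent from the standard databases; handling these requires the sharper structural restrictions on baskets and signatures from \cite{bp} together with targeted group-theoretic arguments rather than brute force. Second, passing from a finite presentation of $\HH/\Tors(\HH)$ to an identifiable group and deciding when two of the resulting $\pi_1$'s coincide or are genuinely new: this is precisely where Theorem~\ref{pi} does the real work, and it in turn rests on the residual finiteness and ``goodness'' input (the theorem of Grunewald--Jaikin-Zapirain--Zalesski) that guarantees a finite-index subgroup mapping isomorphically onto a finite-index subgroup of a product of orbifold surface groups.
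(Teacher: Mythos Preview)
Your proposal is essentially the paper's own approach: the theorem is presented there simply as a consequence of Theorem~\ref{classiso}, i.e., of running exactly the classification algorithm you describe (Lemma~\ref{bounded} and the steps a)--d) following it) and then reading off the fundamental groups via the fibre-product recipe $\HH/\Tors(\HH)$ together with Theorem~\ref{pi}; the paper gives no further argument beyond pointing to Table~\ref{K2>4} and the references \cite{4names}, \cite{bp}.

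One factual slip to correct: for $K^2=6$ Table~\ref{K2>4} shows \emph{only} the basket $\tfrac12(1,1)^2$ (two nodes), not also $\tfrac13(1,1)+\tfrac13(1,2)$; the latter is the basket that occurs for $K^2=5$. In particular all singularities of $X$ in the $K^2=6$ case are rational double points, so $S'$ is automatically minimal and $K_{S'}^2=K_X^2$, and your caveat about non-canonical baskets and extra $(-1)$-curves is unnecessary here. With that adjustment, your outline matches the paper: the eight families are the six rows of Table~\ref{K2>4} with $K^2=6$ (the last two rows having $N=2$), the six $\pi_1$'s listed there are the six new groups, and the three finite ones (e.g.\ $\ZZ_5\times\mathfrak{A}_4$) give the ``in particular'' clause.
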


\subsection{Galois coverings and their deformations}

Another standard method for constructing new algebraic surfaces is to
consider abelian Galois-coverings of
known surfaces.

We shall in the sequel recall the structure theorem on normal finite
$\ZZ_2^r$-coverings, $r\geq1$, of
smooth algebraic surfaces $Y$. In fact (cf. \cite{pardiniabelian}, or
\cite{volumemax} for a more topological
approach) this theory holds more generally for any $G$-covering, with
$G$  a finite abelian group.

Since however we do not want here to dwell too much into the general theory
and,  in most of the applications we consider here only the case
$\ZZ_2^2$ is used, we restrict
ourselves to this more special situation.

We shall denote by $G:=\ZZ_2^r$ the Galois group and by
$G^*:=\Hom(G,\CC^*)$ its dual group of
characters which we identify to $G^*:=\Hom(G,\ZZ/2)$ .

Since $Y$ is smooth  any  finite abelian covering $f \colon X \ra Y$
is flat hence in the eigensheaves  splitting
$$ f_* \hol_X = \bigoplus_{\chi \in G^*}  \mathcal{L}^*_{\chi} =
\hol_Y \oplus \bigoplus_{\chi \in
G^*\setminus \{0\}} \hol_Y(- L_{\chi}).
$$
each rank 1 sheaf  $\mathcal{L}^*_{\chi}$ is invertible and
corresponds to a Cartier divisor $- L_{\chi}$.

For each $\sigma \in G$ let $R_{\sigma} \subset X$ be the divisorial
part of the fixed point set of $\sigma$.
Then one associates to $\sigma$ a divisor $D_{\sigma}$ given by
$f(R_{\sigma}) =  D_{\sigma}$; let
$x_{\sigma}$ be a section such that $\divi(x_{\sigma}) = D_{\sigma}$.

Then the algebra structure on $f_*
\hol_X$ is given by the following (symmetric, bilinear) multiplication maps:
$$
\hol_Y(- L_{\chi}) \otimes \hol_Y(- L_{\eta}) \ra \hol_Y(- L_{\chi + \eta}),
$$ given by the section $x_{\chi, \eta} \in H^0(Y,
\hol_Y(L_{\chi}+L_{\eta} - L_{\chi +\eta}))$, defined by
$$ x_{\chi, \eta}:= \prod_{\chi(\sigma) = \eta(\sigma) = 1} x_{\sigma}.
$$
It is now not difficult in this case to show directly the
associativity of the multiplication
defined  above (cf. \cite{pardini} for the general case of an abelian cover).

In particular, the $G$-covering $f \colon X \ra Y$ is embedded in the
vector bundle $\mathbb{V}:=
\bigoplus _{\chi \in G^*} \mathbb{L}_{\chi}$, where
$\mathbb{L}_{\chi}$ is the geometric line bundle
whose sheaf of sections is $\hol_Y(L_{\chi})$, and is there defined
by the equations:
$$ z_{\chi} z_{\eta} = z_{\chi + \eta}\prod_{\chi(\sigma) =
\eta(\sigma) =1}x_{\sigma}.
$$

Note the special case where $\chi = \eta$, when $\chi+\eta$ is the
trivial character $1$, and $z_1 = 1$. In
particular, let $\chi_1, \ldots , \chi_r$ be a basis of $G^*
=\ZZ_2^r$, and set $z_i :=z_{\chi_i}$. Then we get
the following $r$ equations
\begin{equation}\label{double} z_i^2 = \prod _{\chi_i(\sigma) =1} x_{\sigma}.
\end{equation}

These equations determine the extension of the function fields, hence
one gets $X$ as the normalization of
the Galois covering given by (\ref{double}). The main point however
is that the previous formulae
yield indeed the normalization explicitly under the conditions
   summarized  in the following

\begin{proposition}\label{data} A normal finite $G \cong \ZZ_2^r$-covering
of a smooth variety $Y$ is completely
determined by the datum of

\begin{enumerate}
\item reduced effective divisors $ D_{\sigma}$, $ \forall {\sigma}
\in G$, which have no common
components,
\item divisor classes $ L_1, \dots L_r$, for $\chi_1, \dots \chi_r$
a  basis of $G^*$, such that we  have the
following linear equivalence
\item[$(\#)$] $2 L_i \equiv  \sum _{\chi_i (\sigma)  = 1} D_{\sigma}$.
\end{enumerate} Conversely, given the datum of 1) and 2) such that $\#)$
holds, we obtain a normal scheme
$X$ with a finite $G \cong \ZZ_2^r$-covering $f \colon X \ra Y$.
\end{proposition}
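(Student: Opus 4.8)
The plan is to prove both directions of Proposition \ref{data} by reducing everything to the local structure of the covering and the explicit equations \eqref{double}. The key point, which is entirely classical, is that a normal finite abelian cover is determined by its branch data together with the line bundles carrying the eigensheaf decomposition, plus the compatibility $(\#)$ forced by the multiplicative structure of $f_*\hol_X$.

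\emph{Necessity.} First I would start from a normal finite $G\cong\ZZ_2^r$-covering $f\colon X\to Y$. Since $Y$ is smooth and $X$ normal, $f$ is flat (the local rings of $X$ are Cohen-Macaulay and $f$ is finite with $Y$ regular), so $f_*\hol_X$ is locally free and splits into the rank-one eigensheaves $\hol_Y(-L_\chi)$ as in the displayed splitting above. For each $\sigma\in G$ one defines $R_\sigma\subset X$ to be the divisorial part of $\Fix(\sigma)$ and $D_\sigma:=f(R_\sigma)$; because the $R_\sigma$ are the components of the (ramification) branch locus labelled by the inertia, the divisors $D_\sigma$ are reduced and effective, and distinct $D_\sigma$ share no component — a local computation at a generic point of a branch component, where the inertia group is cyclic generated by a single involution $\sigma$, shows the covering is locally $z^2 = x_\sigma$ and only $D_\sigma$ passes through that component. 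That gives datum 1). For datum 2) and $(\#)$: restricting the algebra multiplication $\hol_Y(-L_{\chi_i})^{\otimes 2}\to\hol_Y(-L_{2\chi_i})=\hol_Y$ gives the section $x_{\chi_i,\chi_i}=\prod_{\chi_i(\sigma)=1}x_\sigma$, whose divisor is $\sum_{\chi_i(\sigma)=1}D_\sigma$; since that section trivializes $\hol_Y(-2L_{\chi_i})$, we get $2L_{\chi_i}\equiv\sum_{\chi_i(\sigma)=1}D_\sigma$. This is exactly $(\#)$.

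\emph{Sufficiency.} Conversely, given reduced effective $D_\sigma$ with no common components and $L_1,\dots,L_r$ satisfying $(\#)$, I would first build the scheme inside the vector bundle $\mathbb V=\bigoplus_{\chi\in G^*}\mathbb L_\chi$ (with $\mathbb L_\chi$ the total space of $\hol_Y(L_\chi)$, where $L_\chi$ is the obvious $\ZZ/2$-combination of the $L_i$ determined by writing $\chi$ in the chosen basis) cut out by the equations $z_\chi z_\eta = z_{\chi+\eta}\prod_{\chi(\sigma)=\eta(\sigma)=1}x_\sigma$ with $z_1\equiv 1$. One checks that these are well-posed sections: by $(\#)$, $x_{\chi,\eta}\in H^0(Y,\hol_Y(L_\chi+L_\eta-L_{\chi+\eta}))$, because $L_\chi+L_\eta-L_{\chi+\eta}\equiv\sum_{\chi(\sigma)=\eta(\sigma)=1}D_\sigma$ (this is a purely combinatorial consequence of $(\#)$ applied to the basis characters and the bilinearity of $\chi\mapsto L_\chi$ modulo $\equiv$). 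The algebra $\hol_Y\oplus\bigoplus_{\chi\ne0}\hol_Y(-L_\chi)$ with multiplication through the $x_{\chi,\eta}$ is then associative — as remarked in the text, in the $\ZZ_2^r$ case one verifies $x_{\chi,\eta}x_{\chi+\eta,\theta}=x_{\chi,\eta+\theta}x_{\eta,\theta}$ directly, both sides being $\prod_\sigma x_\sigma$ over the set of $\sigma$ lying in at least two of $\ker\chi,\ker\eta,\ker\theta$ — so $X:=\mathbf{Spec}_Y$ of this algebra is a well-defined scheme with a finite flat $G$-action whose quotient is $Y$. Finally one shows $X$ is normal: $X$ is visibly $S_2$ (it is a flat finite cover of a smooth, hence normal, $Y$, and the algebra is a direct sum of reflexive sheaves on a smooth surface), and it is $R_1$ because along each branch divisor $D_\sigma$ — where, since the $D_\sigma$ have no common components, exactly one inertia involution acts — the cover is generically $z^2=x_\sigma$ with $x_\sigma$ a local coordinate, which is regular in codimension one; away from the $D_\sigma$ the cover is étale. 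By Serre's criterion $X$ is normal.

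\emph{Main obstacle.} The routine but slightly delicate step is the sufficiency half: verifying that the prescribed $x_{\chi,\eta}$ really do define global sections of the right line bundles (i.e. that $(\#)$ on the basis characters propagates to all pairs $(\chi,\eta)$) and that the resulting algebra is associative, and then checking normality of $X$ precisely where the branch divisors meet or are singular. The first part is bookkeeping with $\ZZ/2$-linear algebra on the character group; the genuinely geometric content is the $R_1$ check, which is why the hypothesis that the $D_\sigma$ be reduced and share no common components is exactly what is needed — it guarantees that in codimension one only a single involution ramifies, reducing the local model to the standard double cover $z^2 = x_\sigma$.
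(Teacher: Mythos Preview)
Your overall strategy is sound and matches the paper's: extract the $D_\sigma$ and the $L_i$ from the eigensheaf decomposition in one direction, and in the other build the $\hol_Y$-algebra directly and verify normality via Serre's criterion. However, there is a genuine slip in the sufficiency argument, precisely at the point the paper's sketch singles out as the main content.

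You define $L_\chi$ for a non-basis character as ``the obvious $\ZZ/2$-combination of the $L_i$'' and then invoke ``bilinearity of $\chi\mapsto L_\chi$''. But this assignment is \emph{not} linear: already for $r=2$, linearity would force $L_{\chi_1}+L_{\chi_2}-L_{\chi_1+\chi_2}=0$, whereas the multiplication law requires $L_{\chi_1}+L_{\chi_2}-L_{\chi_1+\chi_2}\equiv\sum_{\chi_1(\sigma)=\chi_2(\sigma)=1}D_\sigma$, which is typically nonzero. The correct procedure---and this is exactly what the paper's ``Idea of the proof'' isolates---is to define $L_\chi$ \emph{inductively} via $L_{\chi+\eta}:=L_\chi+L_\eta-\sum_{\chi(\sigma)=\eta(\sigma)=1}D_\sigma$, starting from the basis and checking that the result is independent of the order in which one adds basis characters. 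With that correction your associativity and normality arguments go through. Two minor points: the statement is for a smooth variety, not only a surface; and in your $R_1$ check the local model near a generic point of $D_\sigma$ is a ramified double cover times an \'etale $\ZZ_2^{\,r-1}$-cover, not literally $z^2=x_\sigma$.
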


\begin{proof}[Idea of the proof.] It suffices to determine the
divisor classes $L_{\chi}$ for the remaining
elements of
$G^* $. But since any $\chi$ is a sum of basis elements, it suffices
to  exploit the fact that the linear
equivalences
$$  L_{\chi +
\eta} \equiv  L_{\eta}  +  L_{\chi}   -  \sum _{\chi (\sigma) = \eta
(\sigma) = 1} D_{\sigma}
$$ must hold, and apply induction. Since the covering is well defined
as the normalization of the Galois cover
given by (\ref{double}), each $L_{\chi}$ is well defined. Then the
above formulae determine explicitly the
ring structure of $ f_* \hol_X$, hence $X$. Finally, condition 1
implies the normality of the cover.
\end{proof}

A natural question is of course: when is the scheme $X$  a variety?
I.e., $X$ being normal, when is $X$
connected, or, equivalently, irreducible? The  obvious answer is that
$X$ is irreducible if and only if the monodromy homomorphism

$$\mu \colon H_1 (Y \setminus (\cup_{\sigma} D_{\sigma}) ,\ZZ) \ra
G$$ is surjective.

\begin{remark} From the  extension of Riemann's existence theorem due to
Grauert and Remmert
(\cite{grauertremmert}) we know that
$\mu$ determines the covering. It is therefore worthwhile to see how
$\mu$ is related to the datum of 1) and 2).

Write for this purpose the branch locus $D : = \sum_{\sigma}
D_{\sigma}$ as a sum of irreducible
components $ D_i$. To each $D_i$ corresponds a simple geometric loop
$\ga_i$ around $D_i$, and we set
$ \sigma_i : = \mu (\ga_i)$. Then we have that
$D_{\sigma} : =  \sum_{\sigma_i = \sigma } D_i$. For each character
$\chi$, yielding a double covering
associated to the composition $ \chi \circ \mu$, we must find a
divisor class $L_{\chi}$ such that $ 2
L_{\chi} \equiv
\sum_{\chi (\sigma ) = 1} D_{\sigma}$.

Consider the exact sequence
$$ H^{2n-2} (Y, \ZZ) \ra H^{2n-2} (D, \ZZ) = \oplus_i \ZZ [D_i] \ra
H_1  (Y \setminus D, \ZZ)
\ra  H_1 (Y, \ZZ)  \ra 0$$ and the similar one with $\ZZ$ replaced by
$\ZZ_2$. Denote by
$\Delta$ the subgroup image of $ \oplus_i \ZZ_2 [D_i]$. The
restriction of $\mu$ to $\Delta$ is completely
determined by the knowledge of the $\sigma_i$'s, and we have
$$ 0 \ra \Delta \ra  H_1 (Y \setminus D, \ZZ_2)
\ra  H_1 (Y, \ZZ_2)  \ra 0 .$$

Dualizing, we get $$ 0 \ra H^1 (Y, \ZZ_2)   \ra  H^1 (Y \setminus D, \ZZ_2)
\ra  \Hom (\Delta, \ZZ_2)  \ra 0 .$$

The datum of $\chi \circ \mu$,  extending $\chi \circ \mu |
_{\Delta}$ is then seen to correspond to an affine
space over the vector space $H^1 (Y, \ZZ_2)$: and since
$H^1 (Y, \ZZ_2)$ classifies divisor classes of 2-torsion on $Y$, we
infer that the different choices of  $
L_{\chi} $ such that $ 2 L_{\chi} \equiv
\sum_{\chi (\sigma ) = 1} D_{\sigma}$ correspond bijectively to all
the possible
   choices for $\chi \circ \mu$.

Applying this to all characters, we find how $\mu$ determines the
building data.

Observe on the other hand that if $\mu$ is not surjective, then there
is a character $\chi$
vanishing on the image of $\mu$, hence the corresponding double cover is
disconnected.

But the above discussion shows that $\chi \circ \mu$ is trivial iff
this covering is disconnected,
if and only if the corresponding element in $H^1 (Y \setminus D,
\ZZ_2)$ is trivial, or, equivalently, iff the divisor class $L_{\chi}$
is trivial.

\end{remark}

We infer then

\begin{corollary} Use the same notation as in prop. \ref{data}. Then the
scheme $X$ is  irreducible if
$\{ \sigma | D_{\sigma} > 0  \}$ generates $G$.

Or, more generally, if
for each character
$\chi$ the class in $H^1 (Y \setminus D, \ZZ_2)$ corresponding to
$\chi \circ \mu$
is nontrivial, or, equivalently, the divisor class $L_{\chi}$
is nontrivial.
\end{corollary}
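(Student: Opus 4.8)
The plan is to reduce the statement to the criterion already recorded above — that the normal scheme $X$ is irreducible (equivalently, connected, since $X$ is normal) if and only if the monodromy homomorphism $\mu \colon H_1(Y \setminus D, \ZZ) \ra G$ is surjective — and then to test surjectivity of $\mu$ one character at a time. The elementary input is that a homomorphism $\mu$ into the elementary abelian group $G \cong \ZZ_2^r$ is surjective precisely when no nontrivial character $\chi \in G^*$ satisfies $\chi \circ \mu = 0$: a proper subgroup of $G$ is contained in the kernel of some nonzero $\chi \in G^* = \Hom(G,\ZZ/2)$, so $\mathrm{Im}(\mu) = G$ if and only if every nonzero $\chi$ is nonzero on $\mathrm{Im}(\mu)$, i.e.\ $\chi \circ \mu \neq 0$. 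Combining this with the irreducibility criterion: $X$ is irreducible if and only if, for every nontrivial $\chi \in G^*$, the double cover attached to $\chi \circ \mu$ is connected.

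Next I would invoke the chain of equivalences worked out in the remark preceding the statement: the double cover attached to $\chi \circ \mu$ is disconnected exactly when the class of $\chi \circ \mu$ in $H^1(Y \setminus D, \ZZ_2)$ is trivial, which in turn holds exactly when the divisor class $L_\chi$ is trivial. Substituting this into the last sentence of the previous paragraph yields at once the \emph{more general} assertion: $X$ is irreducible as soon as, for each nontrivial character $\chi$, the class of $\chi \circ \mu$ in $H^1(Y \setminus D, \ZZ_2)$ — equivalently, the class $L_\chi$ — is nontrivial.

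Finally I would deduce the first, simpler statement from the second. Write $D = \sum_i D_i$ for the decomposition into irreducible components, let $\gamma_i$ be a simple geometric loop around $D_i$, and recall $\sigma_i := \mu(\gamma_i)$ together with $D_\sigma = \sum_{\sigma_i = \sigma} D_i$. If $T := \{ \sigma \in G \mid D_\sigma > 0 \}$ generates $G$, then every $\sigma \in T$ equals some $\sigma_i$, hence lies in $\mathrm{Im}(\mu)$, so $\mathrm{Im}(\mu) \supseteq \langle T \rangle = G$ and $X$ is irreducible; phrased via the second statement, for any nontrivial $\chi$ there is $\sigma \in T$ with $\chi(\sigma) = 1$, and for a component $D_i$ of $D_\sigma$ we then get $\chi(\mu(\gamma_i)) = 1$, so $\chi \circ \mu \neq 0$ and its class in $H^1(Y \setminus D, \ZZ_2)$ is nontrivial. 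The one point that is not purely formal is the identification of ``$\chi \circ \mu$ trivial'' with ``$L_\chi$ trivial'', which rests on the exact sequences relating $H^1(Y, \ZZ_2)$, $H^1(Y \setminus D, \ZZ_2)$ and $\Hom(\Delta, \ZZ_2)$; this was already established in the remark above, so in the write-up I would simply refer to it rather than repeat the argument, and the corollary itself becomes a one-line consequence.
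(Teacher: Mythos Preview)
Your proposal is correct and follows essentially the same approach as the paper: the paper's proof is literally the one-liner that if $D_\sigma \geq D_i \neq 0$ then $\mu(\gamma_i) = \sigma$, so the generating hypothesis forces $\mu$ to be surjective; the ``more general'' part is not re-argued in the proof but was already established in the preceding remark, exactly as you say. Your write-up is a bit more expansive (proving the general statement first and then specializing), but the content and the key ingredients are identical.
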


\begin{proof} We have seen that if $D_{\sigma} \geq D_i \neq 0$, then
$ \mu (\ga_i)  = \sigma$, whence we
infer that $\mu$ is surjective.
\end{proof}

An important role plays  here once more the concept of {\em natural
deformations}. This concept was
introduced for bidouble covers in \cite{cat1},  definition 2.8, and
extended to the case of abelian covers in
\cite{pardiniabelian}, definition 5.1. The two definitions do not
exactly coincide, because Pardini takes a
much larger parameter space: however, the deformations appearing with
both definitions are
the same. To avoid confusion we
   call Pardini's case the case of {\em extended natural deformations}.

\begin{definition} Let $ f \colon X \ra Y$ be a  finite $G \cong \ZZ_2^r$
covering with
$Y$ smooth and $X$ normal, so that $X$ is embedded in the vector bundle
$\mathbb{V}$ defined above and is  defined by equations
$$z_{\chi}  z_{\eta} = z_{\chi + \eta} \prod _{\chi (\sigma) = \eta
(\sigma) = 1} x_{\sigma}.$$ Let
$\psi_{\sigma, \chi} $ be a section $\psi_{\sigma, \chi} \in H^0
(Y,\hol_Y (D_{\sigma} - L_{\chi}))$, given $
\forall \sigma \in G,
\chi \in G^*.$ To such a collection we associate an {\em extended
natural deformation}, namely, the
subscheme of $\mathbb{V}$ defined by equations
$$z_{\chi} z_{\eta} = z_{\chi + \eta} \prod _{\chi (\sigma) = \eta
(\sigma) = 1}\left( \sum_{\theta}
\psi_{\sigma, \theta } \cdot z_{\theta} \right).$$

We have instead a (restricted) {\em natural deformation} if we
restrict ourselves to the $\theta$'s such that
$\theta (\sigma) = 0$,and we consider  only an equation of the form
$$ z_{\chi}  z_{\eta} = z_{\chi + \eta} \prod _{\chi (\sigma) = \eta
(\sigma) = 1}
\left( \sum_{\theta (\sigma) = 0} \psi_{\sigma, \theta } \cdot z_{\theta}
\right).$$
\end{definition}

One can generalize some results, even removing the assumption of
smoothness of $Y$, if one assumes the $G
\cong \ZZ_2^r$-covering to be {\em  locally simple}, i.e., to enjoy
the property that for each point $y \in Y$
the
$\sigma$'s such that $y \in D_{\sigma}$ are a linearly independent
set. This is a  good notion since (compare
\cite{cat1}, proposition 1.1) if also $X$ is smooth  the covering is
indeed locally simple.

One has for instance the following result (see \cite{man4}, section 3):

\begin{proposition}\label{natdef} Let $ f : X \ra Y$ be a locally simple  $G
\cong \ZZ_2^r$ covering  with $Y$
smooth and $X$ normal. Then we have the exact sequence

$$
      \oplus_{ \chi (\sigma ) = 0} ( H^0 (\hol_{D_{\sigma}} (D_{\sigma}
- L_{\chi}))) \ra
    {\Ext}^1_{\hol_X} (\Omega^1_X, \hol_X) \ra
    {\Ext}^1_{\hol_X} ( f^* \Omega^1_Y, \hol_X)  .$$ In particular,
every small deformation of $X$ is a
natural deformation if

\begin{enumerate}
\item $  H^1 (\hol_Y ( - L_{\chi})) = 0$,
\item $ {\Ext}^1_{\hol_X} ( f^* \Omega^1_Y, \hol_X) = 0.$
\setcounter{saveenumi}{\theenumi}
\end{enumerate} If moreover
\begin{enumerate}
\setcounter{enumi}{\thesaveenumi}
\item $H^0 (\hol_Y (D_{\sigma} - L_{\chi})) = 0$ $ \forall
\sigma \in  G,  \chi \in G^*$,
\end{enumerate} every small deformation of $X$ is again a  $G \cong
\ZZ_2^r$-covering.
\end{proposition}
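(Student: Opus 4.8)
The plan is to extract both the three-term sequence and the two consequences from the cotangent sequence of the finite morphism $f\colon X\to Y$,
\[
f^*\Omega^1_Y \xrightarrow{\ df\ } \Omega^1_X \longrightarrow \Omega^1_{X/Y}\longrightarrow 0,
\]
together with the principle (which I take as known, cf.\ \cite{man4}) that first-order deformations of the normal surface $X$ are controlled by $T^1_X:=\Ext^1_{\hol_X}(\Omega^1_X,\hol_X)$. Applying $\Hom_{\hol_X}(-,\hol_X)$ and passing to the associated long exact $\Ext$-sequence yields, once the term $\Ext^1_{\hol_X}(\Omega^1_{X/Y},\hol_X)$ has been computed, exactly the displayed exact sequence. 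A minor point to be dealt with here is that $df$ is not injective (it drops rank along the ramification), so one first replaces $f^*\Omega^1_Y$ by the image sheaf of $df$ and checks, using local simplicity, that the kernel of $df$ does not affect the relevant $\Ext^1$. One should also record that the image of the left-hand term in $T^1_X$ is precisely the Kodaira--Spencer image of the family of restricted natural deformations: differentiating the equations $z_\chi z_\eta=z_{\chi+\eta}\prod_{\chi(\sigma)=\eta(\sigma)=1}\bigl(\sum_{\theta(\sigma)=0}\psi_{\sigma,\theta}z_\theta\bigr)$ with respect to the parameters $\psi_{\sigma,\theta}$ reproduces exactly these deformation classes.

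The technical heart, and the step I expect to be the real obstacle, is the $G^*$-equivariant computation of $\Ext^1_{\hol_X}(\Omega^1_{X/Y},\hol_X)$. Here local simplicity is indispensable: locally $f$ is, up to an \'etale coordinate change, a fibre product of simple double covers $z_i^2=x_i$ along transverse smooth divisors, so $\Omega^1_{X/Y}$ is supported on the reduced ramification $R=\bigcup_{\sigma\neq 0}R_\sigma$ and restricts to a line bundle on each $R_\sigma$. One then computes the sheaf-$\Ext$ by the standard formula $\Ext^1_{\hol_X}(\iota_{\sigma*}\mathcal M,\hol_X)\cong\iota_{\sigma*}\bigl(\mathcal M^{\vee}\otimes\hol_{R_\sigma}(R_\sigma)\bigr)$ for $\iota_\sigma\colon R_\sigma\hookrightarrow X$, pushes forward along the finite (hence exact) map $f$, and splits into character eigensheaves: the $\chi$-eigenpiece of $f_*$ of a twisted structure sheaf of $R_\sigma$ is a sheaf on $D_\sigma=f(R_\sigma)$, it is nonzero only for $\chi(\sigma)=0$ (because the local equation of $R_\sigma$ is $\sigma$-anti-invariant, so only characters trivial on $\sigma$ survive the push-forward), and a bookkeeping with the building data $(D_\sigma,L_\chi)$ of Proposition~\ref{data} identifies it with $\hol_{D_\sigma}(D_\sigma-L_\chi)$. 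Taking global sections yields $\Ext^1_{\hol_X}(\Omega^1_{X/Y},\hol_X)\cong\bigoplus_{\chi(\sigma)=0}H^0(\hol_{D_\sigma}(D_\sigma-L_\chi))$. The care required here concerns the torsion of $\Omega^1_X$, the fact that the $R_\sigma$ are only $\QQ$-Cartier at the cyclic quotient singularities of $X$, and the need to verify that the higher $\Ext$'s (and the kernel of $df$) do not disturb exactness of the three-term sequence.

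Granting all this, the ``in particular'' statements follow by chasing the sequence. Hypothesis (2) makes the right-hand term vanish, so $T^1_X$ is a quotient of $\bigoplus_{\chi(\sigma)=0}H^0(\hol_{D_\sigma}(D_\sigma-L_\chi))$: every first-order deformation of $X$ is an infinitesimal natural deformation. Hypothesis (1) upgrades this to actual deformations: from the exact sequences $0\to\hol_Y(-L_\chi)\to\hol_Y(D_\sigma-L_\chi)\to\hol_{D_\sigma}(D_\sigma-L_\chi)\to 0$ and the vanishing $H^1(\hol_Y(-L_\chi))=0$ one sees that each summand is the image of $H^0(\hol_Y(D_\sigma-L_\chi))$, i.e.\ of a genuine natural-deformation parameter $\psi_{\sigma,\chi}$; since the natural deformations form a smooth family over the vector space of such parameters and the Kodaira--Spencer maps have just been matched, the Kuranishi family of $X$ is dominated by the natural family, so every small deformation of $X$ is a natural deformation. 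Finally, hypothesis (3) combined with (1) forces $H^0(\hol_{D_\sigma}(D_\sigma-L_\chi))=0$ for every $\chi\neq 0$, hence kills all those summands; the surviving summands, with $\chi=0$, are $\bigoplus_\sigma H^0(\hol_{D_\sigma}(D_\sigma))$, and these only move the branch divisors $D_\sigma$ within their linear systems while keeping them reduced and keeping the linear equivalences $2L_\chi\equiv\sum_{\chi(\sigma)=1}D_\sigma$ of Proposition~\ref{data}. Therefore every small deformation of $X$ is again a $G\cong\ZZ_2^r$-covering of $Y$.
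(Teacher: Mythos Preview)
Your overall approach is correct and coincides with what the paper intends: the paper's own ``proof'' is only a two-sentence comment, deferring the exact sequence wholesale to \cite{man4}, \S3, and then observing that condition (1) makes each restriction map $H^0(\hol_Y(D_\sigma-L_\chi))\to H^0(\hol_{D_\sigma}(D_\sigma-L_\chi))$ surjective, while condition (2) together with the exact sequence forces the Kodaira--Spencer map of the natural family to be surjective. Your cotangent-sequence derivation, the sheaf-$\Ext$ computation of $\Omega^1_{X/Y}$ via the local model of a fibre product of simple double covers, and the eigenspace bookkeeping are exactly the ingredients that lie behind the cited reference; your reading of conditions (1)--(3) matches and extends the paper's comments (the paper does not even address (3)).

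One small correction: you do not need to worry about the kernel of $df$. Since $Y$ is smooth, $f^*\Omega^1_Y$ is locally free, hence torsion-free on the integral (because normal) scheme $X$; in characteristic zero $df$ is generically injective, so its kernel is a torsion subsheaf of a torsion-free sheaf and hence vanishes. Thus $0\to f^*\Omega^1_Y\to\Omega^1_X\to\Omega^1_{X/Y}\to 0$ is already short exact and your ``replace by the image'' step is unnecessary. Your other caveats (torsion in $\Omega^1_X$, $R_\sigma$ possibly only $\QQ$-Cartier) are genuine points to check in a full write-up, but they do not undermine the argument.
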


\begin{proof}[Comments on the proof.]

In the above proposition condition 1) ensures that
    $$ H^0 (\hol_Y (D_{\sigma} - L_{\chi})) \ra  H^0 (\hol_{D_{\sigma}}
(D_{\sigma} - L_{\chi}))$$
is surjective.

Condition 2 and the above exact sequence imply then that the
natural deformations are parametrized by a
smooth manifold and have surjective Kodaira-Spencer map, whence they
induce all the infinitesimal
deformations.
\end{proof}

\begin{remark}
   In the following  section we shall see examples where surfaces
with $p_g=0$ arise as double covers and
as bidouble covers. In fact there are many more surfaces arising this
way, see e.g. \cite{sbc}.
\end{remark}

\section{Keum-Naie surfaces and primary Burniat surfaces}
\label{keumnaie} In the nineties J.H. Keum and D. Naie (cf.
\cite{naie94}, \cite{keum}) constructed a family
of surfaces with $K_S^2 =4$ and $p_g = 0$ as double covers of an
Enriques surface with eight nodes and
calculated their fundamental group.

We want here to describe explicitly  the
moduli space of these surfaces.

\noindent The motivation for this investigation arose as follows: 
consider the following two cases of  table \ref{K2>4}
whose fundamental group has the form
$$\ZZ^4 \hookrightarrow \pi_1
\twoheadrightarrow \ZZ_2^2 \ra 0.$$

These cases yield $2$ families of respective dimensions $2$ and
$4$, which can also  be seen as $\ZZ_4
\times \ZZ_2$, resp.  $\ZZ_2^3$,  coverings of $\PP^1 \times \PP^1$
branched in a divisor of type $(4,4)$,
resp. $(5,5)$, consisting entirely of horizontal and vertical lines. 
It turns out that their fundamental groups are
isomorphic to the fundamental groups of the
surfaces constructed by Keum-Naie.

A straightforward computation shows that our family of dimension $4$
is equal to the family constructed by
Keum, and that both families are subfamilies of the one constructed by Naie.

As a matter of fact each surface of our
family of $\ZZ_2^3$ - coverings of $\PP^1 \times \PP^1$ has $4$
nodes. These nodes can be smoothened
simultaneously in a $5$ - dimensional family of $\ZZ_2^3$ -  Galois
coverings of $\PP^1 \times \PP^1$.

It suffices to take a smoothing of each $D_i$, which before the 
smoothing consisted of
a vertical plus a horizontal line.The
full six dimensional component is obtained then as the family of 
natural deformations of
these  Galois coverings.

It is a standard computation in local deformation theory to show that
the six dimensional family of natural
deformations of smooth $\ZZ_2^3$ -  Galois coverings of $\PP^1 \times
\PP^1$ is an irreducible component
of the moduli space. We will not give the details of this
calculation, since we get a stronger result by another method.

In
fact, the main result of \cite{keumnaie} is the following:
\begin{theorem}\label{main}
   Let $S$ be a smooth complex projective surface which is
homotopically equivalent to a Keum-Naie surface.
Then $S$ is a Keum-Naie surface.

The moduli space of Keum-Naie surfaces is irreducible, unirational
of dimension equal to six. Moreover, the
local moduli space of a Keum-Naie surface is smooth.

\end{theorem}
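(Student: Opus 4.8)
The plan is to reconstruct the explicit structure of a Keum--Naie surface from its homotopy type by passing to a canonical unramified cover, analysing its Albanese map, and then reading off both the reconstruction statement and the description of the moduli space from the resulting covering data; the smoothness of the local moduli space is obtained at the end from the deformation theory of abelian covers, Proposition~\ref{natdef}.

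\emph{Step 1 (numerical reduction).} Let $S$ be homotopy equivalent to a Keum--Naie surface $S_0$ and put $\Gamma:=\pi_1(S_0)$, so that $\pi_1(S)\cong\Gamma$ fits in $1\to\ZZ^4\to\Gamma\to\ZZ_2^2\to1$ and is in particular infinite. Since $b_1$, the Euler number $e$ and the signature of the intersection form are invariants of the homotopy type, and since a surface with infinite fundamental group cannot be rational or ruled, standard surface theory (Enriques--Kodaira classification, Noether's formula $12\chi(\hol_S)=K_S^2+e(S)$, the index theorem) forces $S$ to be a minimal surface of general type with $q(S)=p_g(S)=0$, $\chi(\hol_S)=1$ and $K_S^2=12-e(S)=4$. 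So it is enough to recover the algebraic structure of $S$ from the pair $(\Gamma;\ K_S^2=4,\ \chi=1)$.

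\emph{Step 2 (the canonical cover and its Albanese --- the crux).} The subgroup $\ZZ^4\subset\Gamma$ is characteristic (it is the maximal finite-index abelian normal subgroup), so the associated connected unramified $\ZZ_2^2$-cover $p\colon\hat S\to S$ is intrinsically attached to $S$; here $\pi_1(\hat S)\cong\ZZ^4$, $K_{\hat S}^2=16$, $\chi(\hol_{\hat S})=4$, hence $q(\hat S)=2$, $p_g(\hat S)=5$. The product decomposition $\ZZ^4=\ZZ^2\times\ZZ^2$ is $\Gamma$-invariant, so by the Remark following Theorem~\ref{pi} (applied with $g=g'=1$) $\hat S$ carries a finite morphism onto a product $E_1\times E_2$ of elliptic curves; ruling out --- via $\pi_1(\hat S)=\ZZ^4$ and $\hat S$ of general type --- that the Albanese image of $\hat S$ is a curve, this morphism can be taken to be the Albanese map $\alpha\colon\hat S\to\Alb(\hat S)=E_1\times E_2$, which is therefore finite and flat. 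One then shows it is a $\ZZ_2^2$ (bidouble) cover whose three branch divisors consist of fibres of the two projections through prescribed $2$-torsion translates, the combinatorial type being forced by the action of the deck group of $p$ on $E_1\times E_2$. This pins down $\hat S$, hence $S=\hat S/\ZZ_2^2$, in terms of the data $(E_1,E_2;\ \text{branch configuration})$; comparing with the classical construction shows $S$ is a Keum--Naie surface, which is the first assertion.

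\emph{Step 3 (moduli space and local smoothness).} The isomorphism classes of such covering data are parametrized by a rational variety: the elliptic curves $E_1,E_2$ contribute a rational $2$-dimensional family (via their $j$-invariants, or via double covers of $\PP^1$ branched at four points), and the positions of the branch fibres, modulo translations and the finitely many remaining ambiguities, contribute $4$ further parameters, for a total of $6$. By Step 2 the resulting map from this parameter variety to the moduli space $\mathfrak M$ of Keum--Naie surfaces is essentially surjective, so $\mathfrak M$ is irreducible, unirational and of dimension $6$. For the smoothness of the Kuranishi family of a Keum--Naie surface $S$, pass once more to the bidouble cover $\alpha\colon\hat S\to E_1\times E_2$: for the branch data found in Step 2 one checks the hypotheses $H^1(\hol_{E_1\times E_2}(-L_\chi))=0$ and $\Ext^1_{\hol_{\hat S}}(\alpha^\ast\Omega^1_{E_1\times E_2},\hol_{\hat S})=0$ of Proposition~\ref{natdef}, so that every small deformation of $\hat S$ is a natural deformation and stays a bidouble cover of a deforming product of elliptic curves, over a \emph{smooth} base. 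Such deformations inherit the $\ZZ_2^2$-action extending the deck action of $p$, so taking quotients shows every small deformation of $S$ is again a Keum--Naie surface; hence the local moduli space of $S$ is smooth, necessarily of dimension $6$ by the previous count.

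\emph{Main obstacle.} The real difficulty is Step 2: establishing that the Albanese map of $\hat S$ is \emph{finite} (no contracted curves, $\hat S$ not fibred onto a curve) and that its branch locus is exactly of the ``vertical-plus-horizontal fibres through $2$-torsion points'' shape dictated by the deck action --- this is the point where the group theory of $\Gamma$ (the specific $\ZZ_2^2$-action on $\ZZ^4$) has to be matched with the geometry of $\hat S$, and where the identification with the original Keum--Naie construction is actually forced. Steps 1 and 3 are, by comparison, bookkeeping together with routine cohomology computations on a product of elliptic curves.
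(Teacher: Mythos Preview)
Your overall architecture matches the paper's: pass to the \'etale $\ZZ_2^2$-cover $\hat S$ determined by $\ZZ^4\lhd\Gamma$, analyse its Albanese map, and read off both the reconstruction and the moduli description from the covering data. But the central geometric claim in your Step~2 is wrong, and this propagates through Steps~2 and~3.

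The Albanese map $\hat\alpha\colon\hat S\to E_1'\times E_2'$ is a \emph{double} cover, not a bidouble cover. This is how Keum--Naie surfaces are built in the paper: $\hat S$ sits inside a line bundle $\mathbb L$ over $E_1'\times E_2'$ as $\{w^2=f\}$ with $f$ a $G$-invariant section of type $(4,4)$, and $S=\hat S/G$. The degree of $\hat\alpha$ being $2$ is the key invariant pulled across the homotopy equivalence (it is determined by the action of $\hat\alpha_*$ on top homology, hence by the homotopy type); this is exactly the content of Proposition~\ref{dc}. A bidouble cover would have degree $4$, which is incompatible with $K_{\hat S}^2=16$ over an abelian surface together with the branch data you propose. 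Relatedly, the branch locus is \emph{not} a union of fibres through $2$-torsion points: that describes only a $4$-dimensional subfamily, whereas the generic Keum--Naie surface has $\{f=0\}$ a smooth $G$-invariant curve of type $(4,4)$. Your parameter count in Step~3 therefore computes the dimension of the wrong family, and your application of Proposition~\ref{natdef} is to the wrong cover.

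What actually has to be shown in Step~2 (and what the paper flags as the crux) is: from $\deg\hat\alpha=2$ one takes the Stein factorisation $\hat S\to Y\to E_1'\times E_2'$ and proves, via the standard double-cover formulae and $K_{\hat S}^2=16$, $\chi(\hat S)=4$, that $Y$ has only canonical singularities and that the branch divisor has type $(4,4)$; $G$-invariance of this divisor then comes for free from equivariance of the Albanese map. Once you correct the degree, the rest of your outline (including the appeal to natural deformations for local smoothness) goes through essentially as in the paper.
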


The proof resorts to a slightly different construction of Keum-Naie 
surfaces. We study a $\ZZ_2^2$-action on the product of two elliptic 
curves $E_1' \times E_2'$. This
action has $16$ fixed points and the
quotient is an $8$-nodal Enriques surface. Constructing
$S$ as a double cover of the Enriques
surface is equivalent to constructing an \'etale $\ZZ_2^2$-covering 
$\hat{S}$ of
$S$, whose existence can be inferred  from the structure of the
fundamental group, and which is obtained as a double cover of $E_1'
\times E_2'$ branched in a $\ZZ_2^2$-invariant divisor of type $(4,4)$.
Because $S = \hat{S }/ \ZZ_2^2$.

The structure of this \'etale $\ZZ_2^2$-covering $\hat{S}$ of $S$
is essentially encoded in the fundamental
group $\pi_1(S)$, which can be described as an affine group $\Gamma \in
\mathbb{A}(2,\CC)$.  The key point is that the double cover 
$\hat{\alpha} : \hat{S} \rightarrow E_1'
\times E_2'$ is the
Albanese map of $\hat{S}$.

Assume now that $S'$ is  homotopically equivalent to a Keum-Naie surface $S$.
Then the corresponding \'etale cover $\hat{S'}$ is homotopically
equivalent to $\hat{S}$. Since we know that the degree of the
Albanese map of $\hat{S}$ is equal to two (by
construction), we can conlude the same for the Albanese map of
$\hat{S'}$ and this allows to deduce that
also $\hat{S'}$ is a double cover of a product of elliptic curves.

A calculation of the invariants of a double cover shows that the 
branch locus is
a $\ZZ_2^2$-invariant divisor of
type $(4,4)$.

We are going to sketch the construction of Keum-Naie surfaces and
the proof of theorem \ref{main} in the
sequel. For details we refer to the original article \cite{keumnaie}.

Let $(E,o)$ be any elliptic curve, with a $G = \ZZ_2^2 = \{0,g_1, g_2,
g_1+g_2 \}$ action given by
$$ g_1(z) := z + \eta, \ \ g_2(z) = -z.
$$

\begin{remark}\label{invdiv}
   Let $\eta \in E$ be a $2$ - torsion point of $E$. Then the divisor
$[o] + [\eta] \in Div^2(E)$ is invariant
under $G$, hence the invertible sheaf $\hol_E([o] + [\eta])$ carries
a natural $G$-linearization.
\end{remark} In particular, $G$ acts on $H^0(E,\hol_E([o] + [\eta]))$,
and for the character eigenspaces, we
have the following:

\begin{lemma}\label{h++} Let $E$ be as above, then:
   $$H^0(E, \hol_E([o] + [\eta])) = H^0(E, \hol_E([o] + [\eta]))^{++}
\oplus H^0(E, \hol_E([o] + [\eta]))^{--}.$$

\noindent I.e., $H^0(E, \hol_E([o] + [\eta]))^{+-} = H^0(E,
\hol_E([o] + [\eta]))^{-+} =0$.
\end{lemma}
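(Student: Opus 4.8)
The plan is to realize $V:=H^0(E,\hol_E([o]+[\eta]))$ as the pull-back of $H^0(\PP^1,\hol_{\PP^1}(1))$ along a natural double cover, and then to read off the $G$-action from this description together with one distinguished invariant section. First, since $\deg([o]+[\eta])=2$ on the elliptic curve $E$, Riemann--Roch gives $\dim V=2$; moreover the inclusion $\hol_E\hookrightarrow\hol_E([o]+[\eta])$, which is $G$-equivariant for the natural linearization of Remark \ref{invdiv} because the divisor is $G$-invariant, produces a section $s_0$ with $\divi(s_0)=[o]+[\eta]$ that is fixed by all of $G$. Thus $s_0\in V^{++}$, and it suffices to show that $g_1g_2$ acts as the identity on $V$ while $g_2$ does not act as a scalar.

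Next I would exploit that $g_1g_2$ acts on $E$ as $z\mapsto\eta-z$. This involution has exactly the four points $z\in E$ with $2z=\eta$ as its fixed locus, so by Riemann--Hurwitz the quotient $\phi\colon E\to C:=E/\langle g_1g_2\rangle$ satisfies $g(C)=0$, i.e. $C\cong\PP^1$; and since $g_1g_2$ swaps $o$ and $\eta$, the reduced divisor $[o]+[\eta]$ is a fibre of $\phi$, giving $\hol_E([o]+[\eta])\cong\phi^*\hol_{\PP^1}(1)$ and a canonical identification $V\cong H^0(\PP^1,\hol_{\PP^1}(1))$ under which $\phi$ is the morphism defined by $|[o]+[\eta]|$. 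As the divisor is $G$-invariant, $G$ acts on $\PP^1=\PP(V^{\vee})$ making $\phi$ equivariant; the element $g_1g_2$ is the deck transformation of $\phi$, hence acts trivially on $\PP^1$, hence acts on $V$ by a scalar $\lambda$ with $\lambda^2=1$, and since $g_1g_2\cdot s_0=s_0$ we get $\lambda=1$, i.e. $g_1g_2$ acts as the identity on $V$.

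It then remains to check that $g_2$ is not a scalar on $V$, equivalently that $g_2$ does not act trivially on $\PP^1$. If it did, $g_2$ would preserve each fibre of $\phi$; since $g_2\neq\mathrm{id}_E$, on the generic ($2$-point) fibre it would have to coincide with the deck transformation $g_1g_2$, forcing $g_1=\mathrm{id}_E$ --- a contradiction. Hence $g_2$, being of order $2$ and not scalar on the $2$-dimensional space $V$, is diagonalizable with eigenvalues $+1$ and $-1$. Finally $g_1=g_2\cdot(g_1g_2)$ and $g_1g_2$ acts as the identity on $V$, so $g_1$ and $g_2$ act by the same operator on $V$; therefore $V$ is the direct sum of the joint $(+1,+1)$- and $(-1,-1)$-eigenspaces, with $s_0$ in the former, so that
$$H^0(E,\hol_E([o]+[\eta]))=H^0(E,\hol_E([o]+[\eta]))^{++}\oplus H^0(E,\hol_E([o]+[\eta]))^{--}$$
and the mixed eigenspaces vanish.

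The one subtle point, and the place where the argument is easiest to get wrong, is the bookkeeping of the linearization: one must know that $g_1g_2$ acts on $V$ as $+\mathrm{id}$ rather than $-\mathrm{id}$, and this is precisely what the normalization of the natural linearization of Remark \ref{invdiv} delivers through the $G$-invariant section $s_0$. (Alternatively, the same conclusion can be reached by exhibiting explicit sections --- $s_0=1$ together with $\zeta(z)-\zeta(z-\eta)$, corrected by a suitable multiple of $1$ so as to become anti-invariant --- but the fibre-space description above avoids the Weierstrass-$\zeta$ computation.)
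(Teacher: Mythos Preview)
Your proof is correct. The paper itself, being a survey, states this lemma without proof (the details are deferred to \cite{keumnaie}), so there is no argument in the paper to compare against directly.

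Your route via the quotient $\phi\colon E\to E/\langle g_1g_2\rangle\cong\PP^1$ is clean and efficient: recognizing $g_1g_2$ as the hyperelliptic-type involution for the degree-$2$ map defined by $|[o]+[\eta]|$ forces it to act projectively trivially, hence by a scalar on $V$, and the invariant section $s_0=1$ (with divisor exactly $[o]+[\eta]$) pins that scalar to $+1$. The remaining observation --- that $g_2$ cannot also act as a scalar, since otherwise it would lie in the deck group $\langle g_1g_2\rangle$ and force $g_1=\mathrm{id}_E$ --- is exactly the right obstruction. You are right to flag the linearization bookkeeping as the one genuinely delicate point: the ``natural'' linearization of Remark~\ref{invdiv} is the one inherited from the $G$-action on the sheaf of rational functions (since $\hol_E([o]+[\eta])\subset\mathcal{K}_E$ and the divisor is $G$-invariant), and under this the constant section $1$ is visibly fixed, so $s_0\in V^{++}$ as you use. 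The alternative explicit computation with $\zeta$-functions that you allude to is the more classical way to see the same thing, but your geometric argument avoids it entirely.
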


\begin{remark} Our notation is self explanatory, e.g. 
$$H^0(E, \hol_E([o] +
[\eta]))^{+-} =  H^0(E, \hol_E([o] +
[\eta]))^{\chi},$$ where $\chi$ is the character of $G$ with
$\chi(g_1) = 1$, $\chi(g_2) = -1$.
\end{remark}

Let now $E_i' := \CC / \Lambda_i$, $i=1,2$, where $\Lambda_i := \ZZ
e_i \oplus \ZZ e_i'$, be two complex
elliptic curves. We consider the affine transformations $\gamma_1, \
\gamma_2 \in \mathbb{A}(2,\CC)$,
defined as follows:
$$
\gamma_1 \begin{pmatrix}
    z_1\\z_2
\end{pmatrix} = \begin{pmatrix}
    z_1 + \frac{e_1}{2}\\- z_2
\end{pmatrix}, \ \ \gamma_2 \begin{pmatrix}
    z_1\\z_2
\end{pmatrix} = \begin{pmatrix}
    - z_1\\ z_2 + \frac{e_2}{2}
\end{pmatrix},
$$

\noindent and let $\Gamma \leq \mathbb{A}(2,\CC)$ be the affine group
generated by $\gamma_1,
\gamma_2$ and by the translations $e_1, e_1', e_2, e_2'$.
\begin{remark} i) $\Gamma$ induces a $G:= \ZZ_2^2$-action on $E_1' \times E_2'$.

\noindent ii) While $\gamma_1, \ \gamma_2$ have no fixed points on
$E_1' \times E_2'$, the involution
$\gamma_1 \gamma_2$ has $16$ fixed points on $E_1' \times E_2'$. It
is easy to see that the quotient
$Y:=(E_1' \times E_2')/G$ is an Enriques surface having $8$ nodes,
with canonical double cover the Kummer
surface $(E_1' \times E_2')/<\gamma_1 \gamma_2>$.
\end{remark}

We  lift the $G$-action on $E_1' \times E_2'$ to an
appropriate ramified double cover $\hat{S}$ such
that $G$ acts freely on $\hat{S}$.

To do this, consider the following geometric line bundle $\mathbb{L}$
on $E_1' \times E_2'$, whose
invertible sheaf of sections is given by:
$$
\hol_{E_1'\times E_2'}(\mathbb{L}) :=p_1^*\hol_{E_1'}([o_1] +
[\frac{e_1}{2}]) \otimes
p_2^*\hol_{E_2'}([o_2] + [\frac{e_2}{2}]),
$$ where $p_i: E_1'\times E_2' \rightarrow E_i'$ is the projection to
the i-th factor.

\medskip By remark \ref{invdiv}, the divisor $[o_i] + [\frac{e_i}{2}]
\in \Div^2(E_i')$ is invariant under $G$.
Therefore, we get a natural $G$-linearization on the two line bundles
$\hol_{E_i'}([o_i] + [\frac{e_i}{2}])$,
whence also on $\mathbb{L}$.

Any two $G$-linearizations of $\mathbb{L}$ differ by a character $\chi
: G \rightarrow \CC^*$. We twist the
above obtained linearization of $\mathbb{L}$ with the character 
$\chi$ such that
$\chi(\gamma_1) = 1$, $\chi(\gamma_2)
= -1$.

\begin{definition} Let
$$f \in H^0(E_1' \times E_2', p_1^*\hol_{E_1'}(2[o_1] +
2[\frac{e_1}{2}]) \otimes p_2^*\hol_{E_2'}(2[o_2]
+ 2[\frac{e_2}{2}]))^G$$ be a $G$ - invariant section of
$\mathbb{L}^{\otimes 2}$ and denote by $w$ a
fibre coordinate of $\mathbb{L}$. Let $\hat{S}$ be the double cover
of $E_1' \times E_2'$ branched in $f$,
i.e., $$
\hat{S} = \{w^2 = f(z_1,z_2) \} \subset \mathbb{L}.
$$ Then $\hat{S}$ is a $G$ - invariant hypersurface in $\mathbb{L}$,
and we have a $G$ - action on
$\hat{S}$.

\noindent We call $S:= \hat{S} / G$ a {\em Keum - Naie surface}, if
\begin{itemize}
   \item $G$ acts freely on $\hat{S}$, and
\item $\{f = 0 \}$ has only {\em non-essential singularities}, i.e.,
$\hat{S}$ has at most rational double points.
\end{itemize}
\end{definition}

\begin{remark} If
$$ f \in H^0(E_1' \times E_2', p_1^*\hol_{E_1'}(2[o_1] +
2[\frac{e_1}{2}]) \otimes p_2^*\hol_{E_2'}(2[o_2]
+ 2[\frac{e_2}{2}]))^G
$$ is such that $\{(z_1,z_2) \in E_1' \times E_2' \ | \ f(z_1, z_2) =
0 \} \cap \Fix(\gamma_1 + \gamma_2) =
\emptyset$, then $G$ acts freely on $\hat{S}$.
\end{remark}

\begin{proposition} Let $S$ be a Keum - Naie surface. Then $S$ is a minimal
surface of general type with
\begin{itemize}
   \item[i)] $K_S^2 = 4$,
\item[ii)] $p_g(S) = q(S) = 0$,
\item[iii)] $\pi_1(S) = \Gamma$.
\end{itemize}
\end{proposition}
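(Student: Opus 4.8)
The plan is to read off all three assertions from the étale degree-$4$ cover $\pi\colon \hat S\to S$ together with the theory of double covers applied to $\hat\alpha\colon\hat S\to A:=E_1'\times E_2'$. Write $\mathbb{L}$ for the divisor class with $\hat S$ branched in $|2\mathbb{L}|$; since each factor contributes a degree-$2$ divisor one has $\mathbb{L}^2=8$, $\mathbb{L}$ is the external tensor product of a degree-$2$ line bundle on each $E_i'$, hence ample, and $K_A\cong\hol_A$. If $f$ has only non-essential singularities then $\hat S$, and hence also $S=\hat S/G$, has at worst rational double points; since passing to the minimal resolution alters neither $K^2$, nor $\chi$, nor $\pi_1$, I will treat $\hat S$ as smooth. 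The double cover formula gives $K_{\hat S}=\hat\alpha^*(K_A+\mathbb{L})=\hat\alpha^*\mathbb{L}$, a pullback of an ample class under a finite morphism, so $K_{\hat S}$ is nef and big; thus $\hat S$ is of general type and minimal (a $(-1)$-curve would meet $K_{\hat S}$ negatively). Because $\pi$ is étale, $K_{\hat S}=\pi^*K_S$, so $S$ too is minimal of general type, and $K_S^2=\tfrac14 K_{\hat S}^2=\tfrac14\cdot 2\mathbb{L}^2=4$, which is (i).

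For (ii): the double cover formula gives $\chi(\hol_{\hat S})=2\chi(\hol_A)+\tfrac12\mathbb{L}(\mathbb{L}+K_A)=\tfrac12\mathbb{L}^2=4$, and multiplicativity of $\chi$ under étale covers yields $\chi(\hol_S)=1$, i.e. $q(S)=p_g(S)$. It remains to show $q(S)=0$. From $\hat\alpha_*\hol_{\hat S}=\hol_A\oplus\hol_A(-\mathbb{L})$ we get $h^1(\hol_{\hat S})=h^1(\hol_A)+h^1(A,\hol_A(-\mathbb{L}))=2+0=2$, the second summand vanishing since $\mathbb{L}$ is ample. Hence $H^0(\hat S,\Omega^1_{\hat S})$ is $2$-dimensional; since $\hat\alpha^*$ is injective on holomorphic $1$-forms and $H^0(A,\Omega^1_A)=\langle dz_1,dz_2\rangle$ is already $2$-dimensional, $\hat\alpha^*$ is an isomorphism there. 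Now $\gamma_1^*dz_1=dz_1$, $\gamma_1^*dz_2=-dz_2$, $\gamma_2^*dz_1=-dz_1$, $\gamma_2^*dz_2=dz_2$, so there is no nonzero $G$-invariant holomorphic $1$-form, and since $\pi$ is étale Galois with group $G$ we get $q(S)=H^0(\hat S,\Omega^1_{\hat S})^G=0$; therefore $p_g(S)=0$.

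For (iii), freeness of the $G$-action yields the extension
$$1\longrightarrow\pi_1(\hat S)\longrightarrow\pi_1(S)\longrightarrow G\cong\ZZ_2^2\longrightarrow 1,$$
so it suffices to identify $\pi_1(\hat S)$ with $\Lambda_1\times\Lambda_2$ and to check that the extension is the one defining $\Gamma$. For the first point, $\hat\alpha_*\colon\pi_1(\hat S)\to\pi_1(A)=\Lambda_1\times\Lambda_2$ is surjective because $\hat S$ is a connected cover of $A$; for injectivity I would realise $\hat S$ as an ample divisor in the $\PP^1$-bundle $P:=\PP(\hol_A\oplus\mathbb{L})$ (it meets every fibre in degree $2$, and ampleness of $\mathbb{L}$ makes the resulting class on $P$ ample) and apply the Lefschetz hyperplane theorem for the $3$-fold $P$, giving $\pi_1(\hat S)\xrightarrow{\ \sim\ }\pi_1(P)=\pi_1(A)$; alternatively one may invoke directly that a double cover of a smooth surface branched along a smooth ample member of $|2\mathbb{L}|$ does not change $\pi_1$. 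For the extension: the automorphisms $\gamma_1,\gamma_2$ act on $\hat S$ (it is $G$-invariant in the total space of $\mathbb{L}$), hence give generators of $\pi_1(S)$ modulo $\pi_1(\hat S)=\Lambda_1\times\Lambda_2$; the induced $G$-action on $\Lambda_1\times\Lambda_2$ is by the linear parts $\mathrm{diag}(1,-1)$ and $\mathrm{diag}(-1,1)$; and since $\hat\alpha_*$ is an isomorphism, ``the lift to $\hat S$ of a translation of $A$'' is unambiguous, so the relations $\gamma_1^2=e_1$, $\gamma_2^2=e_2$ and the commutator relation read off from the affine transformations already hold inside $\pi_1(S)$. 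This identifies $\pi_1(S)$ with the subgroup of $\mathbb{A}(2,\CC)$ generated by $\Lambda_1\times\Lambda_2$ and $\gamma_1,\gamma_2$, namely $\Gamma$.

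The routine parts are the double-cover and étale-cover bookkeeping; the genuine obstacle is controlling $\pi_1(\hat S)$, i.e. showing that the ramified map $\hat\alpha$ induces an isomorphism on fundamental groups. Either route above works, but each needs a real input — a Lefschetz-type theorem applied to $P$, or the explicit branched-cover computation presenting $\pi_1(\hat S)$ as $\ker(\pi_1(A\setminus B)\to\ZZ_2)$ modulo squares of meridians and checking that it collapses onto $\pi_1(A)$ — and for the fully detailed argument I would refer to \cite{keumnaie}, where the $\Gamma$-action is in fact built directly on the universal cover. One should also record, as used silently above, that resolving a rational double point does not alter $\pi_1$ (the exceptional configuration is simply connected and kills the local fundamental group of the link), so the reduction to smooth $\hat S$ is harmless.
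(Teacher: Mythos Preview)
Your argument tracks the paper's (very terse) proof closely: compute on $\hat S$ via the double-cover formulae and push down through the free $\ZZ_2^2$-action. Parts (i) and (ii) are correct and are exactly the ``obvious'' and ``standard representation theory'' steps the paper alludes to. For (iii) you correctly isolate the real content as $\pi_1(\hat S)\cong\pi_1(E_1'\times E_2')$, which the paper simply asserts.

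There is, however, a genuine gap in your Lefschetz argument. The surface $\hat S$ is \emph{not} ample in $P=\PP(\hol_A\oplus\mathbb{L})$: in the compactification, $\hat S=\{W^2=fU^2\}$ is disjoint from the section $\sigma_\infty=\{U=0\}$, so $\hat S\cdot C=0$ for every curve $C\subset\sigma_\infty$. Thus ``ampleness of $\mathbb{L}$ makes the resulting class on $P$ ample'' is false; $\hat S$ is only nef and big. The repair is easy but should be stated: since $\sigma_\infty$ has anti-ample normal bundle $\mathbb{L}^{-1}$, contract it to a point to obtain a normal projective threefold $\bar P$ with one isolated singularity; then $\hat S\subset\bar P$ avoids the singular point and is now genuinely ample, so Lefschetz (in the Goresky--MacPherson/Hamm--L\^e form for ample divisors missing the singular locus) gives $\pi_1(\hat S)\xrightarrow{\sim}\pi_1(\bar P)=\pi_1(P)=\pi_1(A)$. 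Alternatively, Nori's Lefschetz-type theorem applies directly since $\mathcal{O}_{\hat S}(\hat S)=\hat\alpha^*\mathbb{L}$ is ample on $\hat S$. Your fallback ``branched-cover computation'' also works, but note that the pulled-back branch locus $\tilde B\subset\CC^2$ has infinitely many components, so the na\"ive presentation of $\pi_1(\CC^2\setminus\tilde B)$ is not finite; one really does need the ampleness of $B$ to conclude.

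One notational slip: throughout (iii) you write $\Lambda_1\times\Lambda_2$, but in the paper's conventions $\pi_1(E_i')=\Lambda_i'=\ZZ e_i\oplus\ZZ e_i'$; the unprimed $\Lambda_i=\ZZ\tfrac{e_i}{2}\oplus\ZZ e_i'$ are the larger lattices of the quotient curves $E_i$. This matters when you check the extension, since the normal $\ZZ^4$ in $\Gamma$ is $\Lambda_1'\oplus\Lambda_2'$.
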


i) is obvious, since $K_{\hat{S}}^2 = 16$,

ii) is verified via
standard arguments of  representation theory.

iii) follows since $\pi_1 (\hat{S}) = \pi_1 (E_1' \times E_2') $.
\noindent

Let now $S$ be a smooth complex projective surface with $\pi_1(S) =
\Gamma$. Recall that $\gamma_i^2 =
e_i$ for $i = 1,2$. Therefore $\Gamma = \langle \gamma_1, e_1',
\gamma_2, e_2' \rangle$ and we have the
exact sequence
$$ 1 \rightarrow \ZZ^4 \cong \langle e_1, e_1', e_2, e_2' \rangle
\rightarrow \Gamma \rightarrow \ZZ_2^2
\rightarrow 1,
$$

where $e_i \mapsto \gamma_i^2$.

We set $\Lambda_i':= \ZZ e_i \oplus \ZZ e_i'$, hence $\pi_1 (E_1'
\times E_2') = \Lambda_1' \oplus
\Lambda_2'$. We also have the two lattices $\Lambda_i := \ZZ
\frac{e_i}{2} \oplus \ZZ e_i'$.

\begin{remark}
   1) $\Gamma$ is a group of affine transformations on $\Lambda_1
\oplus \Lambda_2$.

\noindent 2) We have an \'etale double cover $E_i' = \CC / \Lambda_i'
\rightarrow E_i := \CC / \Lambda_i$,
which is the quotient by a semiperiod of $E_i'$.
\end{remark}

$\Gamma$ has two subgroups of index two:
$$
\Gamma_1 := \langle \gamma_1, e_1', e_2, e_2' \rangle, \ \
\Gamma_2:=\langle e_1, e_1', \gamma_2, e_2'
\rangle,
$$ corresponding to two \'etale covers of $S$: $S_i \rightarrow S$,
for $i = 1,2$.

Then one can show:
\begin{lemma} The Albanese variety of $S_i$ is $E_i$. In particular,
$q(S_1) = q(S_2) = 1$.
\end{lemma}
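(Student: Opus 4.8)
The plan is to descend everything from the \'etale double cover $\hat S\to S_i$. Since $\Gamma_i\le\Gamma=\pi_1(S)$ contains the translation lattice $\ZZ^4$ with quotient the subgroup of $\ZZ_2^2$ generated by the class of $\gamma_i$, we have $S_i=\hat S/\langle\gamma_i\rangle$, and $\hat S\to S_i$ is an unramified double cover with Galois group $\langle\gamma_i\rangle$ (which acts freely on $\hat S$ because $\gamma_i$ already acts freely on $E_1'\times E_2'$). Writing $\{j\}=\{1,2\}\setminus\{i\}$, the automorphism $\gamma_i$ of $E_1'\times E_2'$ acts on the factor $E_i'$ by the semiperiod translation $z_i\mapsto z_i+\tfrac{e_i}{2}$, whose quotient is $E_i=E_i'/\langle\tfrac{e_i}{2}\rangle=\CC/\Lambda_i$, and on $E_j'$ by $z_j\mapsto -z_j$. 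Hence the composition $\hat S\xrightarrow{\hat\alpha}E_1'\times E_2'\xrightarrow{p_i}E_i'\longrightarrow E_i$ is $\langle\gamma_i\rangle$-invariant and descends to a morphism $\psi_i\colon S_i\to E_i$; I will show that $\psi_i$ is, up to a translation, the Albanese map of $S_i$, which gives the assertion.

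\emph{Step 1: $q(S_i)=1$.} Because $\hat\alpha$ is the Albanese map of $\hat S$ (recalled in the sketch above; alternatively $q(\hat S)=2$ follows from $\pi_1(\hat S)\cong\pi_1(E_1'\times E_2')$, and then the covering involution $w\mapsto -w$ of $\hat\alpha$ acts trivially on $H^0(\hat S,\Omega^1_{\hat S})$ since its invariant subspace $\hat\alpha^*H^0(E_1'\times E_2',\Omega^1)$ is already two-dimensional), pull-back identifies $H^0(\hat S,\Omega^1_{\hat S})$ with $\CC\,\hat\alpha^*dz_1\oplus\CC\,\hat\alpha^*dz_2$. The deck transformation $\gamma_i$ of $\hat S$ lies over the affine transformation $\gamma_i$ of $E_1'\times E_2'$, and $\gamma_i^*dz_i=dz_i$, $\gamma_i^*dz_j=-dz_j$; hence $\gamma_i$ acts on $H^0(\hat S,\Omega^1_{\hat S})$ with one-dimensional invariant subspace $\CC\,\hat\alpha^*dz_i$. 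As $\hat S\to S_i$ is \'etale, $H^0(S_i,\Omega^1_{S_i})=H^0(\hat S,\Omega^1_{\hat S})^{\langle\gamma_i\rangle}=\CC\,\hat\alpha^*dz_i$, so $q(S_i)=1$ and $\Alb(S_i)$ is an elliptic curve.

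\emph{Step 2: $\Alb(S_i)\cong E_i$.} By the universal property $\psi_i$ factors as $S_i\to\Alb(S_i)\xrightarrow{h}E_i$. From $q(S_i)=1$ we get $b_1(S_i)=2$, so $H_1(\Alb(S_i),\ZZ)=H_1(S_i,\ZZ)/\Tors(H_1(S_i,\ZZ))\cong\ZZ^2$, which has the same rank as $H_1(E_i,\ZZ)=\Lambda_i$. It therefore suffices to show that $(\psi_i)_*\colon\Gamma_i\to\Lambda_i$ is surjective, for then $h_*$ is a surjection between free abelian groups of the same finite rank, hence an isomorphism, and $h$ is an isomorphism of complex tori. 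Now $\Gamma_i$ is precisely the kernel of the homomorphism $\Gamma\to\{\pm1\}$ recording the action on the $z_i$-axis (equivalently, the projection $\Gamma\to\Gamma/\ZZ^4\cong\ZZ_2^2\to\langle\overline{\gamma_j}\rangle$); hence every element of $\Gamma_i$ acts on the coordinate $z_i$ by a pure translation, and reading off this translation amount modulo $\Lambda_i$ is exactly $(\psi_i)_*$. It sends $\gamma_i\mapsto\tfrac{e_i}{2}$, $e_i'\mapsto e_i'$ and $e_j,e_j'\mapsto 0$; since $\tfrac{e_i}{2}$ and $e_i'$ generate $\Lambda_i$, $(\psi_i)_*$ is onto. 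Thus $\Alb(S_i)\cong E_i$, and in particular $q(S_i)=\dim E_i=1$. The cases $i=1$ and $i=2$ are entirely symmetric.

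The only point requiring care is Step~2: one must know not merely that $H_1(S_i,\ZZ)/\Tors$ and $\Lambda_i$ have the same rank (immediate from $q(S_i)=1$) but that the natural map between them is surjective, and this is exactly what the explicit description of $(\psi_i)_*$ through translation amounts supplies. Everything else is formal, given from the earlier construction that $\hat\alpha$ is the Albanese map of $\hat S$ and that $\pi_1(S_i)=\Gamma_i$. (If $\hat S$, and hence $S$ and $S_i$, happens to carry rational double points, one passes throughout to minimal resolutions, which changes neither the fundamental groups nor the Albanese varieties involved.)
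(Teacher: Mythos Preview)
The paper does not actually supply a proof here; it says ``Then one can show'' and defers the details to \cite{keumnaie}. So there is no paper argument to compare against directly.

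Your argument is sound in spirit, but watch the logical order relative to the paper's narrative. The lemma is stated for an \emph{abstract} surface $S$ with $\pi_1(S)\cong\Gamma$, and the map $\hat\alpha\colon\hat S\to E_1'\times E_2'$ is constructed only \emph{after} the lemma, precisely by using the Albanese maps $S_i\to E_i$ and then lifting. Your Step~1, however, presupposes $\hat\alpha$ and that it is the Albanese map of $\hat S$, which is circular in that context. The easy repair is to argue directly on $\pi_1$: from $\pi_1(\hat S)\cong\ZZ^4$ one gets $q(\hat S)=2$, and conjugation by $\gamma_i$ on $H_1(\hat S,\ZZ)=\langle e_1,e_1',e_2,e_2'\rangle$ fixes $e_i,e_i'$ and negates $e_j,e_j'$; since this involution is real and preserves the Hodge decomposition, its $+1$-eigenspace on $H^{1,0}(\hat S)$ is one-dimensional, whence $q(S_i)=\dim H^0(\hat S,\Omega^1_{\hat S})^{\langle\gamma_i\rangle}=1$. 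In the abstract setting the clause ``$\Alb(S_i)$ is $E_i$'' should then be read as a \emph{definition} of the elliptic curve $E_i$ (and thereby of $\Lambda_i$ and $E_i'$), chosen to match the notation of the constructed case; the substantive content of the lemma is $q(S_i)=1$.

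If instead you intend the lemma only for the originally constructed Keum--Naie surface, where $\hat\alpha$ is given by construction, then your proof goes through as written: there $\hat\alpha_*\colon\pi_1(\hat S)\to\pi_1(E_1'\times E_2')$ is a surjection of free abelian groups of rank $4$ (a branched double cover onto a smooth target), hence an isomorphism, so $\hat\alpha$ really is the Albanese map independently of the lemma, and your Steps~1 and~2 are legitimate.
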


\noindent
Let $\hat{S} \rightarrow S$ be the \'etale $\ZZ_2^2$-covering
associated to $\ZZ^4 \cong \langle e_1, e_1',
e_2, e_2' \rangle \triangleleft \Gamma$. Since $\hat{S} \rightarrow
S_i \rightarrow S$, and $S_i$ maps to
$E_i$ (via the Albanese map), we get a morphism
$$ f:\hat{S} \rightarrow E_1 \times E_2 = \CC/ \Lambda_1 \times \CC /
\Lambda_2.
$$ Then the covering of $E_1 \times E_2$ associated to $\Lambda_1'
\oplus \Lambda_2' \leq \Lambda_1
\oplus \Lambda_2$ is $E_1' \times E_2'$, and
since $  \pi_1(\hat{S}) = \Lambda_1'
\oplus \Lambda_2'$ we see that $f$ factors
through $E_1' \times E_2'$ and that the
Albanese map of $\hat{S}$ is $\hat{\alpha} : \hat{S} \rightarrow E_1'
\times E_2'$.

The proof of the main result follows then from
\begin{proposition}\label{dc} Let $S$ be a smooth complex projective
surface, which is homotopically equivalent to
a Keum - Naie surface. Let $\hat{S} \rightarrow S$ be the \'etale
$\ZZ_2^2$-cover associated to $\langle
e_1, e_1', e_2, e_2' \rangle \triangleleft \Gamma$ and let
\begin{equation*}\label{stein}
\xymatrix{
\hat{S} \ar[r]^{\hat{\alpha}}\ar[dr]&E_1' \times E_2'\\ &Y \ar[u]_{\varphi}\\ }
\end{equation*} be the Stein factorization of the Albanese map of $\hat{S}$.

\noindent Then $\varphi$ has degree $2$ and $Y$ is a canonical model
of $\hat{S}$.

More precisely, $\varphi$  is a
double cover of $E_1' \times E_2'$
branched on a divisor of type $(4,4)$.

\end{proposition}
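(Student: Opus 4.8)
The plan is to transport the whole geometric picture from a genuine Keum--Naie surface $\hat S_0$ to $\hat S$ by observing that every quantity used along the way is an oriented--homotopy invariant, so it must take the same value on $\hat S$ as on $\hat S_0$; the only genuinely new ingredient will be the $\ZZ_2^2$--equivariance.

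First I would record the invariants of $\hat S$. Since $\hat S\to S$ is \'etale of degree $4$ and $\chi(\hol_S)=1$, one gets $\chi(\hol_{\hat S})=4$; since $K^2=2e+3\sigma$ depends only on the oriented homotopy type, $K_S^2=4$ and hence $K_{\hat S}^2=16$; and $b_1(\hat S)=\mathrm{rk}\,\pi_1(\hat S)^{ab}=\mathrm{rk}\,\ZZ^4=4$, so $q(\hat S)=2$, $p_g(\hat S)=5$. Moreover $S$, being homotopy equivalent to a minimal surface of general type with $K^2>0$, is itself minimal of general type, so $\hat S$ is minimal of general type and carries a free $\ZZ_2^2$--action with quotient $S$. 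The heart of the argument is then: the Albanese map $\hat\alpha\colon \hat S\to A:=\Alb(\hat S)$ (a complex $2$--torus, as $q=2$ and $H_1(\hat S;\ZZ)$ is torsion free) is generically finite of degree exactly $2$. Its image is $2$--dimensional, because the rank of the cup product $\bigwedge^2 H^1(\hat S;\ZZ)\to H^2(\hat S;\ZZ)$ is a homotopy invariant equal to $6$ (for $\hat S_0$ this equals $\mathrm{rk}\,H^2(E_1'\times E_2')=6$, since $\hat\alpha_0$ is surjective and hence $\hat\alpha_0^*$ is injective on $H^2$), whereas if the Albanese image were a curve this rank would be $\le 1$. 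Once $\hat\alpha$ is surjective, the projection formula $\hat\alpha_*\hat\alpha^*=\deg\hat\alpha$ together with the fact that $\hat\alpha^*$ is an isomorphism on $H^1(\,\cdot\,;\ZZ)$ gives $\deg\hat\alpha=\int_{\hat S}\hat\alpha^*\epsilon_1\cup\hat\alpha^*\epsilon_2\cup\hat\alpha^*\epsilon_3\cup\hat\alpha^*\epsilon_4$ for any $\ZZ$--basis $\epsilon_i$ of $H^1(A;\ZZ)$; the $\hat\alpha^*\epsilon_i$ form a $\ZZ$--basis of $H^1(\hat S;\ZZ)$, so this number is an invariant of the cohomology ring, hence a homotopy invariant, hence equal to $\deg\hat\alpha_0=2$ (the Albanese of $\hat S_0$ is by construction a double cover of $E_1'\times E_2'$).

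With $\deg\hat\alpha=2$ in hand, I would analyse the Stein factorization $\hat S\xrightarrow{\ \psi\ }Y\xrightarrow{\ \varphi\ }A$. As $Y$ is normal and $\varphi$ is finite of degree $2$, $\varphi$ is a flat double cover: $\varphi_*\hol_Y=\hol_A\oplus L^{-1}$ with $2L\equiv\mathcal B$ the branch divisor, $\omega_Y=\varphi^*L$ (since $\omega_A\cong\hol_A$), and $Y=\{w^2=f\}$ inside the total space of $L$. Functoriality of the Albanese gives a $\ZZ_2^2$--action on $A$; reading off the conjugation action of $\Gamma/\pi_1(\hat S)\cong\ZZ_2^2$ on $\pi_1(\hat S)=\Lambda_1'\oplus\Lambda_2'$, the two generators act (up to translations) as $(\mathrm{id},-\mathrm{id})$ and $(-\mathrm{id},\mathrm{id})$ on $(\Lambda_1'\otimes\mathbb R)\oplus(\Lambda_2'\otimes\mathbb R)$; the complex structure of $A$ must commute with these, so it splits and $A\cong E_1'\times E_2'$ is a product of elliptic curves, the action being generated by $\gamma_1,\gamma_2$ as in the construction. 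Since the Stein factorization is canonical, $\varphi$ is $\ZZ_2^2$--equivariant and $\mathcal B$ is $\ZZ_2^2$--invariant. Next, $\chi(\hol_Y)=\chi(\hol_{\hat S})=4$: here one uses that $\hat S$ is minimal, so the birational morphism $\psi$ can only contract $(-2)$--curves; hence $Y$ has at worst rational double points, $\psi$ is a crepant minimal resolution, and $Y$ is the canonical model of $\hat S$ (so $K_Y=\varphi^*L$ is ample and $K_Y^2=K_{\hat S}^2=16$). Combining $\chi(\hol_Y)=\tfrac12 L^2=4$ gives $L^2=8$, so $\mathcal B$ has one of the types $(2,8)$, $(4,4)$, $(8,2)$. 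These three have identical Chern numbers, so to single out $(4,4)$ I would invoke the freeness of the $\ZZ_2^2$--action on $Y$ (equivalently $\pi_1(\hat S)=\Gamma$, with no torsion): the involution $\gamma_1\gamma_2$ has $16$ fixed points on $E_1'\times E_2'$, and the requirement that its lift to $Y$ be fixed--point free — together with the fact that a $\ZZ_2^2$--invariant divisor with a fibre component meeting the relevant line bundle in the wrong parity is incompatible with that lift — pins the divisibility of $L$ and forces type $(4,4)$; then $\mathcal B\in|2L|$ with $L$ of type $(2,2)$ and with at worst rational double point--type singularities, as asserted.

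The step I expect to be the main obstacle is the degree computation in its full rigour: proving that $\hat\alpha$ is surjective (so that ``degree of the Albanese map'' really is the topological number) and that this number is forced by the cohomology ring, and then packaging the $\ZZ_2^2$--equivariance so that it genuinely survives passage to the Stein factorization and to the line bundle $L$. Once $\deg\hat\alpha=2$, the splitting $A\cong E_1'\times E_2'$, and the equivariance are available, identifying $Y$ with the canonical model and $\mathcal B$ with a $(4,4)$ divisor is the ``calculation of invariants of a double cover'' --- careful, but routine.
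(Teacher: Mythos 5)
Your strategy is the same as the paper's: the degree of the Albanese map is read off from the cohomology ring (hence is a homotopy invariant, equal to $2$), the Albanese torus splits as $E_1'\times E_2'$ because of the induced $\ZZ_2^2$-action on $H_1$, and the rest is the numerology of a degree-two cover of an abelian surface. Your computation of $\deg\hat{\alpha}$ via the four-fold cup product of a basis of $H^1$, and of the Albanese dimension via the rank of $\bigwedge^2 H^1\to H^2$, is exactly the right way to make precise the paper's ``immediately implies''.

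One intermediate justification as you state it would fail: ``$\hat S$ is minimal, so the birational morphism $\psi$ can only contract $(-2)$-curves; hence $Y$ has at worst rational double points.'' Minimality of $\hat S$ only tells you that $\hat S$ coincides with the minimal resolution of $Y$; it does not constrain the exceptional curves to be $(-2)$-curves (contracting a $(-3)$-curve on a minimal surface of general type produces a normal surface with a non-canonical singularity whose minimal resolution is still minimal). The correct argument --- the one the paper delegates to Horikawa --- must use both invariants simultaneously: running the canonical resolution of the double cover of $E_1'\times E_2'$ branched in $2L$ gives $\chi = \tfrac{1}{2}L^2 - \tfrac{1}{2}\sum k_i(k_i-1)$ and $K^2 \le 2L^2 - 2\sum (k_i-1)^2$, where $k_i=\lfloor m_i/2\rfloor$ runs over the infinitely near singular points of the branch curve; substituting $\chi=4$ and $K^2=16$ forces $\sum(k_i-1)\le 0$, hence all singularities negligible and $L^2=8$. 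Note that each invariant separately only yields $L^2\ge 8$, so your route of first claiming $\chi(\hol_Y)=4$ and then deducing $L^2=8$ has the logic backwards. Your final step, excluding branch type $(2,8)$ in favour of $(4,4)$, is also only gestured at; this is the one place where the freeness of the lifted $\ZZ_2^2$-action (equivalently, the torsion-freeness of $\Gamma$) must be exploited with some care, but the paper itself defers exactly this detail to \cite{keumnaie}, and your overall route is the intended one.
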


The fact that $S$ is homotopically equivalent to a Keum-Naie surface
immediately implies that the degree of
$\hat{\alpha}$ is equal to two.

The second assertion, i.e., that $Y$ has only canonical
singularities, follows instead from standard formulae
on double covers (cf. \cite{horikawa78}).

The last assertion follows from $K^2_{\hat{S}} = 16$ and $(\ZZ/2\ZZ)^2$-
invariance.

In fact, we conjecture a stronger statement to hold true:
\begin{conjecture}\label{conj1}
   Let $S$ be a minimal smooth projective surface such that
\begin{itemize}
   \item[i)] $K_S^2 = 4$,
\item[ii)] $\pi_1(S) \cong \Gamma$.
\end{itemize} Then $S$ is a Keum-Naie surface.
\end{conjecture}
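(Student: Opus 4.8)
The plan is to reduce Conjecture~\ref{conj1} to the situation already analysed in Proposition~\ref{dc}; compared with Theorem~\ref{main}, what is missing is that one no longer gets $p_g(S)=0$ and $\deg\hat\alpha=2$ for free from a homotopy equivalence, so these must be recovered from $K_S^2=4$ and $\pi_1(S)\cong\Gamma$ alone. First I would record the elementary consequences: since $\Gamma^{ab}=\ZZ_2^3\times\ZZ_4$ is finite, $q(S)=0$; since $S$ is minimal with $K_S^2=4$ it is automatically of general type (a minimal surface with $0<K_S^2<8$ is necessarily of general type), so $\chi(\hol_S)=1+p_g(S)\ge1$; and since $\Gamma$ contains the normal subgroup $N:=\langle e_1,e_1',e_2,e_2'\rangle\cong\ZZ^4$ of index $4$, $\pi_1(S)$ is infinite. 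Passing to the \'etale $\ZZ_2^2$-cover $\hat S\to S$ associated with $N\triangleleft\Gamma$, the surface $\hat S$ is minimal of general type with $\pi_1(\hat S)\cong\ZZ^4$, $K_{\hat S}^2=16$, $q(\hat S)=\tfrac12 b_1(\hat S)=2$ and $\chi(\hol_{\hat S})=4\chi(\hol_S)$.

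Next I would study the Albanese map $\hat\alpha\colon\hat S\to A:=\Alb(\hat S)$, with $\dim A=2$. Its image cannot be a curve: it generates $A$, so its normalisation has genus $\ge2$, and taking the Stein factorisation of $\hat S\to A$ one would get a surjection of $\pi_1(\hat S)\cong\ZZ^4$ onto the fundamental group of a smooth curve of genus $\ge2$, which is non-abelian. Hence $\hat\alpha$ is surjective and generically finite; its degree is $\ge2$, since degree $1$ would make $\hat S$ birational to the abelian surface $A$, contradicting general type. Thus $\hat S$ has maximal Albanese dimension, and Severi's inequality (in Pardini's form) applies: $K_{\hat S}^2\ge4\chi(\hol_{\hat S})$, i.e.\ $16\ge16\,\chi(\hol_S)$. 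This forces $\chi(\hol_S)=1$, so $p_g(S)=0$ as well, and places $\hat S$ exactly on the Severi line. Invoking the classification of surfaces on the Severi line (Barja--Pardini--Stoppino), the Albanese map $\hat\alpha$ is generically $2$-to-$1$ onto $A$.

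At this point one is in the setting of Proposition~\ref{dc}, and it remains to recognise $A$ and the branch data from the group $\Gamma$. The $\ZZ_2^2$-action on $\hat S$ induces an affine action on $A$; on $H_1(\hat S;\ZZ)$ this action respects the splitting into $\langle e_1,e_1'\rangle\cong\ZZ^2$ (on which $\gamma_1,\gamma_2$ act by $+1,-1$) and $\langle e_2,e_2'\rangle\cong\ZZ^2$ (on which they act by $-1,+1$), by the commutation relations in $\Gamma$. Since this is a splitting into $\ZZ_2^2$-isotypic pieces, it respects the Hodge decomposition, so $A$ splits $\ZZ_2^2$-equivariantly as a product of elliptic curves $A=E_1'\times E_2'$ on which $\gamma_1,\gamma_2$ act, up to translation, as $\mathrm{id}\times(-\mathrm{id})$ and $(-\mathrm{id})\times\mathrm{id}$; the relations $\gamma_i^2=e_i$ pin the translation parts down to half-periods, so the $\ZZ_2^2$-action on $A$ is exactly the one in the Keum-Naie construction. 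Since $\hat S\to S$ is \'etale, $\ZZ_2^2$ acts freely on $\hat S$; the Stein factorisation $\hat S\to Y\to A$ exhibits $Y$ as a double cover of $E_1'\times E_2'$ branched on a $\ZZ_2^2$-invariant divisor $B\equiv2L$. From $K_{\hat S}^2=16$ one gets that $Y$ has at worst rational double points (hence is the canonical model of $\hat S$) and that $L^2=8$; $\ZZ_2^2$-invariance together with freeness of the action then forces $L$ to be of type $(2,2)$, i.e.\ $B$ of type $(4,4)$. Therefore $S=\hat S/\ZZ_2^2$ is a Keum-Naie surface.

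I expect the two genuinely delicate points to be the following. First, deducing from the equality case of Severi's inequality that $\deg\hat\alpha$ equals exactly $2$ (not merely an even number): this needs the precise Barja--Pardini--Stoppino description of surfaces on the Severi line, together with some bookkeeping of the $(-2)$-curves contracted by $\hat S\to Y$ in order to match $K_{\hat S}^2=16$ with the double-cover numerics $2L^2$. Second, excluding a priori the competing polarisation type $(1,4)$ for $L$: this comes down to a finite check of which $\ZZ_2^2$-linearised line bundles on $E_1'\times E_2'$ yield a free quotient with invariants $K^2=4$, $p_g=0$, which should leave only type $(2,2)$; if that argument turns out subtle, an alternative is to compute $\pi_1$ of the hypothetical $(1,4)$ family and check that it differs from $\Gamma$, again singling out type $(2,2)$ and hence only Keum-Naie surfaces.
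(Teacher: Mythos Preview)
The statement you are trying to prove is labelled \emph{Conjecture}~\ref{conj1} in the paper and is \emph{not} proved there. The authors only establish the weaker Theorem~\ref{Kample}, which carries the extra hypothesis that $S$ deforms to a surface with ample canonical bundle; this is precisely what allows them to invoke Manetti's characterisation (Theorem~\ref{SevMan}) of the equality case in Severi's inequality, available at the time only under $K_S$ ample. Immediately afterwards the paper states, as a further conjecture, the unconditional Severi-line classification (``$K_S^2=4\chi(S)$ with maximal Albanese dimension $\Rightarrow q=2$ and $\deg\alpha=2$'') and remarks that it would \emph{immediately imply} Conjecture~\ref{conj1}.

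Your proposal follows exactly this route: reduce to $\hat S$ lying on the Severi line via Pardini's inequality, and then conclude $\deg\hat\alpha=2$. The difference is that where the paper could only conjecture the equality case, you invoke Barja--Pardini--Stoppino. That result appeared several years after this survey and is precisely the missing ingredient; with it, your argument becomes the proof the authors envisaged. So there is nothing essentially new in your strategy compared to what the paper already outlines --- you are supplying the one external input they lacked. Your own list of ``delicate points'' confirms this: the first of them is literally the Severi-line equality characterisation that the paper isolates as the obstacle.

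Two minor remarks on the remainder. First, once $\deg\hat\alpha=2$ is in hand, the identification $A\cong E_1'\times E_2'$ with the prescribed $\ZZ_2^2$-action is already carried out in the paper before Proposition~\ref{dc} (via the intermediate covers $S_i$ and their Albanese maps to $E_i$), so you need not redo it via the Hodge-isotypic splitting argument. Second, the exclusion of a $(1,4)$-polarisation is not an independent issue: the paper's argument shows the branch divisor is $\ZZ_2^2$-invariant of numerical type $(4,4)$ on $E_1'\times E_2'$, and the freeness of the action plus $K_{\hat S}^2=16$ pin down $L$; this is subsumed in the ``last assertion'' of Proposition~\ref{dc}.
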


We can prove
\begin{theorem}\label{Kample}
   Let $S$ be a minimal smooth projective surface such that
\begin{itemize}
   \item[i)] $K_S^2 = 4$,
\item[ii)] $\pi_1(S) \cong \Gamma$,
\item[iii)] there is a deformation of $S$ with ample canonical bundle.
\end{itemize} Then $S$ is a Keum-Naie surface.
\end{theorem}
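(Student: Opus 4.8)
The plan is to reduce Theorem \ref{Kample} to Proposition \ref{dc} by exploiting the fact that having ample canonical bundle is an open condition, while the conclusion ``$S$ is a Keum-Naie surface'' is closed in an appropriate sense. First I would observe that condition iii) guarantees that $S$ sits in a connected family $\{S_t\}$ over a disk with $S_0 = S$ and $S_1$ having ample canonical class; since $\pi_1$ is a deformation invariant, every $S_t$ in this family satisfies i) and ii). Hence it suffices to treat the case where $K_S$ itself is ample, because once we know that the general member of the family is a Keum-Naie surface, the special member $S$ lies in the closure of the Keum-Naie locus inside the moduli space $\mathfrak{M}_{(1,4)}^{can}$; combined with Theorem \ref{main}, which asserts that this locus is an irreducible component with smooth points (so it is both open and closed in the moduli space), we conclude $[S]$ lies in it, i.e. $S$ is a Keum-Naie surface.

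So the heart of the matter is: assume $K_S$ is ample, $K_S^2 = 4$, $\pi_1(S) \cong \Gamma$; show $S$ is Keum-Naie. Here I would run exactly the argument sketched after Proposition \ref{dc}, but I must replace the homotopy-equivalence hypothesis used there by the weaker algebraic input. The group $\Gamma$ fits into $1 \to \ZZ^4 \to \Gamma \to \ZZ_2^2 \to 1$, so from $\pi_1(S) \cong \Gamma$ alone I get the \'etale $\ZZ_2^2$-cover $\hat S \to S$ corresponding to $\ZZ^4 \triangleleft \Gamma$, with $\pi_1(\hat S) \cong \ZZ^4$, hence $q(\hat S) \le 2$; since $\chi(\hat S) = 4\chi(S) = 4$ and $K_{\hat S}^2 = 4 K_S^2 = 16$, the Bogomolov-Miyaoka-Yau inequality forces $\chi(\hat S) \ge 1$, and a Noether-type computation together with $b_1(\hat S) = 4$ pins down $q(\hat S) = 2$, $p_g(\hat S) = 1$. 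Then the Albanese map $\hat\alpha : \hat S \to A$ goes to an abelian surface with $\pi_1(\hat S) \to \pi_1(A)$ an isomorphism of the underlying lattices, and the subgroup structure $\Lambda_i' \subset \Lambda_i$ identifies $A$ up to isogeny with $E_1' \times E_2'$ equipped with the relevant polarization. The key quantitative point I must re-prove without homotopy equivalence is that $\deg \hat\alpha = 2$: this is where I would work hardest. One route is to bound the degree via the self-intersection $K_{\hat S}^2 = 16$ and the numerical type $(4,4)$ of the pullback polarization together with the $G$-invariance and the fact that $\hat\alpha$ must be $G$-equivariant with $G$ acting freely on $\hat S$ and as the translation/inversion group on $E_1' \times E_2'$; an alternative is a Severi-type inequality ($K^2 \ge 4\chi$ with equality analysis) for surfaces with irregularity $2$ whose Albanese image is a surface, which here is an equality case and severely constrains $\hat\alpha$.

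Once $\deg \hat\alpha = 2$ is in hand, the remaining steps are the same routine double-cover computations already invoked in the excerpt: the Stein factorization $\hat S \to Y \to E_1' \times E_2'$ has $\varphi : Y \to E_1' \times E_2'$ of degree $2$, the standard formulae on double covers (as in \cite{horikawa78}) show $Y$ has only rational double points and is the canonical model of $\hat S$, and $K_{\hat S}^2 = 16$ together with $\ZZ_2^2$-invariance forces the branch divisor to have type $(4,4)$ and to be $G$-invariant, missing $\Fix(\gamma_1\gamma_2)$ so that $G$ acts freely on $\hat S$; thus $S = \hat S / G$ is by definition a Keum-Naie surface. The main obstacle is precisely the step in the previous paragraph: replacing the topological input $\deg\hat\alpha = 2$ (free in the homotopy-equivalence setting of Proposition \ref{dc}) by a proof using only $K_S^2 = 4$, ampleness, and $\pi_1 \cong \Gamma$ — everything else is bookkeeping with the group extension and classical surface theory.
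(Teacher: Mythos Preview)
Your plan is essentially the paper's proof: the ``alternative'' you mention --- Severi's inequality $K^2 \ge 4\chi$ together with the equality analysis --- is precisely the tool the paper uses. The paper cites Pardini's proof of Severi's conjecture (Theorem~\ref{sevconj}) and Manetti's characterization of the equality case under ampleness (Theorem~\ref{SevMan}): from $K_{\hat S}^2 = 16$ Severi gives $\chi(\hat S)\le 4$, while $\chi(\hat S)=4\chi(S)\ge 4$ since $S$ is of general type, so equality holds; then Manetti (applicable because $S$ deforms to a surface with $K$ ample, hence so does $\hat S$) yields $q(\hat S)=2$ and $\deg\hat\alpha=2$, and one concludes as in Proposition~\ref{dc}. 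Your closure-in-moduli reduction to the $K$-ample case is a clean way to justify this last invocation of Manetti.

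One correction: you assert $\chi(\hat S)=4\chi(S)=4$, i.e.\ $\chi(S)=1$, but the hypotheses only give $q(S)=0$ (since $\Gamma^{\mathrm{ab}}$ is finite), not $p_g(S)=0$. In the paper this is \emph{derived}, not assumed: Severi's inequality applied to $\hat S$ supplies the missing upper bound $\chi(\hat S)\le 4$, which combined with $\chi(\hat S)=4\chi(S)\ge 4$ forces $\chi(S)=1$. So Severi is doing double duty --- pinning down $\chi$ as well as, via Manetti, the degree --- and your BMY/Noether detour for the invariants is both unnecessary and, as written, circular.
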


We recall the following results:

\begin{theorem}[Severi's conjecture, \cite{pardini}]\label{sevconj}
   Let $S$ be a minimal smooth projective surface of maximal Albanese
dimension (i.e., the image of the
Albanese map is a surface), then $K_S^2 \geq 4 \chi(S)$.
\end{theorem}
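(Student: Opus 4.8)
The plan is to follow Pardini's strategy: combine the Albanese map of $S$ with the étale covers furnished by the multiplication maps of an abelian variety, cut the covered surface by a pencil to obtain a fibration over $\PP^1$, apply the slope inequality for fibred surfaces, and then let the degree of the cover tend to infinity so that the sharp constant $4$ survives in the limit.

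First I would dispose of the degenerate cases. If $\chi(\hol_S)\le 0$ the inequality is immediate, since maximal Albanese dimension forces $\kappa(S)\ge 0$ and hence $K_S^2\ge 0$ for the minimal surface $S$; so we may assume $\chi(\hol_S)\ge 1$, and this forces $S$ to be of general type, because a minimal surface of maximal Albanese dimension with $\kappa(S)\le 1$ has $\chi(\hol_S)=0$. Thus $K_S$ is nef and big, the Albanese map $a\colon S\to A:=\Alb(S)$ is generically finite onto its (surface) image, and $q:=\dim A\ge 2$. Fix a very ample divisor $H$ on $A$ and set $D:=a^*H$, a nef and big divisor on $S$. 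For $n\ge 1$ let $\mu_n\colon A\to A$ be multiplication by $n$ and put $S_n:=S\times_{A,\mu_n}A$. Since $\mu_n$ is étale and $a_*\colon\pi_1(S)\to\pi_1(A)$ is surjective, $S_n$ is a connected smooth surface, $p_n\colon S_n\to S$ is étale of degree $n^{2q}$, and $\mu_n\circ a_n=a\circ p_n$, where $a_n\colon S_n\to A$ is the second projection (again generically finite onto a surface). Using $\mu_n^*H\equiv n^2H$ in $\mathrm{NS}(A)$ together with the fact that $p_n$ is étale, one records the identities $K_{S_n}^2=n^{2q}K_S^2$, $\chi(\hol_{S_n})=n^{2q}\chi(\hol_S)$, $(a_n^*H)^2=n^{2q-4}D^2$ and $a_n^*H\cdot K_{S_n}=n^{2q-2}(D\cdot K_S)$.

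Next I would slice. Choose a \emph{general} pencil $\Lambda\subset|H|$ on $A$; its preimage on $S_n$ is a pencil inside $|a_n^*H|$ with exactly $n^{2q-4}D^2$ simple base points, and blowing these up produces $\sigma_n\colon Y_n\to S_n$ together with a fibration $f_n\colon Y_n\to\PP^1$ whose general fibre $F_n$ has genus $g_n$ satisfying $2g_n-2=(a_n^*H)^2+a_n^*H\cdot K_{S_n}=n^{2q-4}D^2+n^{2q-2}(D\cdot K_S)$, so $g_n\to\infty$. One must check that $f_n$ is already relatively minimal for a general $\Lambda$: $S_n$ is minimal with $K_{S_n}$ nef and therefore contains no $(-1)$-curve, the $\sigma_n$-exceptional curves are sections of $f_n$, and any vertical curve of $Y_n$ would have to be contracted by $a_n$ and hence lie over a point of the base locus of $\Lambda$ in $A$; but a general $\Lambda$ avoids the finitely many images under $a_n$ of the curves contracted by $a_n$. (If $a_n$ has degree $>1$ onto its image and $F_n$ happens to be disconnected, one first replaces $f_n$ by the connected fibration in its Stein factorization; the extra bounded constants this introduces disappear in the limit.) Applying the slope inequality of Xiao (and Cornalba--Harris),
\[ K_{Y_n/\PP^1}^2\ \ge\ \frac{4(g_n-1)}{g_n}\,\deg (f_n)_*\omega_{Y_n/\PP^1}, \]
and substituting $K_{Y_n}^2=n^{2q}K_S^2-n^{2q-4}D^2$, $K_{Y_n/\PP^1}^2=K_{Y_n}^2+8(g_n-1)$, $\chi(\hol_{Y_n})=n^{2q}\chi(\hol_S)$ and $\deg(f_n)_*\omega_{Y_n/\PP^1}=\chi(\hol_{Y_n})+g_n-1$, this becomes
\[ n^{2q}K_S^2+3\,n^{2q-4}D^2+4\,n^{2q-2}(D\cdot K_S)\ \ge\ \tfrac{4(g_n-1)}{g_n}\bigl(n^{2q}\chi(\hol_S)+g_n-1\bigr). \]

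Finally I would divide by $n^{2q}$ and let $n\to\infty$. Because $g_n-1\sim\tfrac12 n^{2q-2}(D\cdot K_S)$, the three terms $3n^{-4}D^2$, $4n^{-2}(D\cdot K_S)$ and $n^{-2q}(g_n-1)$ all go to $0$, while $\tfrac{4(g_n-1)}{g_n}\to 4$, so the limit inequality reads $K_S^2\ge 4\chi(\hol_S)$. The hard part is everything attached to the slope inequality: having it available in the sharp form with coefficient $\tfrac{4(g_n-1)}{g_n}$, and --- more delicately --- guaranteeing that the sliced fibration $f_n$ is relatively minimal, i.e.\ controlling the vertical $(-1)$-curves created by the pencil, which is exactly where the freedom to choose $\Lambda$ general is spent; the computation of the numerical invariants of the covers $S_n$ and the blow-ups $Y_n$ and the final passage to the limit are by comparison routine.
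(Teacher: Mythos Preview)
The paper does not give its own proof of this theorem; it is quoted without proof from Pardini's paper \cite{pardini}, introduced by the sentence ``We recall the following results.'' Your proposal is a faithful and correct outline of Pardini's original argument --- \'etale covers via multiplication by $n$ on the Albanese, slicing by a general pencil to obtain a fibration over $\PP^1$, Xiao's slope inequality, and passage to the limit $n\to\infty$ --- so there is nothing in the paper itself to compare against beyond the citation. (One small remark: your justification of relative minimality is slightly garbled; the clean argument is that any $(-1)$-curve on $Y_n$ is either $\sigma_n$-exceptional, hence a section, or the strict transform of a curve $C\subset S_n$ with $K_{S_n}\cdot C<0$, which is impossible since $K_{S_n}=p_n^*K_S$ is nef.)
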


M. Manetti proved Severi's inequality under the assumption that $K_S$
is ample, but he also gave a
description of the limit case $K_S^2 = 4 \chi(S)$, which will be
crucial for the above theorem \ref{Kample}.

\begin{theorem}[M. Manetti, \cite{manetti}]\label{SevMan}
   Let $S$ be a minimal smooth projective surface of maximal Albanese
dimension with $K_S$ ample then
$K_S^2 \geq 4 \chi(S)$, and equality holds if and only if $q(S) = 2$,
and the Albanese map $\alpha : S
\rightarrow \Alb(S)$ is a finite double cover.
\end{theorem}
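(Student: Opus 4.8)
The inequality $K_S^2 \ge 4\chi(S)$ is already provided by Theorem \ref{sevconj} applied to $S$, which, having ample canonical bundle, is certainly minimal and of maximal Albanese dimension; so the entire content is the characterisation of the equality case. The reverse implication is a direct computation. If $q(S)=2$ and $\alpha\colon S\to A=\Alb(S)$ is a finite double cover of the abelian surface $A$, write the branch divisor as $2L$, so that $\alpha_*\hol_S=\hol_A\oplus\hol_A(-L)$ and $K_S=\alpha^*(K_A+L)=\alpha^*L$. Since $K_A=0$ and $\chi(\hol_A)=0$, Riemann--Roch on $A$ gives $\chi(\hol_S)=\chi(\hol_A)+\chi(\hol_A(-L))=\tfrac12 L^2$, while $K_S^2=2L^2$; hence $K_S^2=4\chi(S)$. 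Ampleness of $K_S$ guarantees that $S$ is its own canonical model, so ``finite double cover'' is unambiguous and no $(-2)$-curves intervene.

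For the forward implication, assume $K_S^2=4\chi(S)$ and factor the Albanese map as $S\to\Sigma\hookrightarrow A$, where $\Sigma=\alpha(S)$ is a surface and the first arrow $\beta$ is generically finite of degree $d$. My first goal is to show that $\Sigma$ is a translate of an abelian surface, which forces $A=\Alb(S)=\Sigma$ and $q(S)=2$. To this end I would combine the ramification formula $K_S=\beta^*K_\Sigma+R$, with $R\ge 0$, and the comparison $\chi(\hol_S)=d\,\chi(\hol_\Sigma)+(\text{non-negative correction})$ with the Severi inequality $K_\Sigma^2\ge 4\chi(\hol_\Sigma)$ for $\Sigma$ (again of maximal Albanese dimension). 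Since $\chi(\hol_\Sigma)\ge 0$ for any surface of maximal Albanese dimension, a strictly positive value, i.e. $\Sigma$ of general type, would push $K_S^2$ strictly above $4\chi(S)$; ruling out the remaining Kodaira dimensions for $\Sigma$ leaves only $\chi(\hol_\Sigma)=0$ with $K_\Sigma$ numerically trivial, i.e. $\Sigma$ abelian. Thus $q(S)=2$ and $\alpha\colon S\to A$ is generically finite onto an abelian surface.

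Having reduced to $q(S)=2$, I would run the slope-inequality machine to pin down $d=2$. From a polarisation $H$ on $A$ one builds, for $m\gg0$, fibrations $f\colon\hat S\to B$ by curves of genus $g$ on suitable \'etale (multiplication-by-$m$) covers of $S$, chosen so as to keep the base genus $b$ positive while letting $g\to\infty$; note that such \'etale covers scale both $K^2$ and $\chi$ by the covering degree and hence preserve the standing equality. The Cornalba--Harris--Xiao slope inequality $\omega_f^2\ge\tfrac{4(g-1)}{g}\deg f_*\omega_f$, together with the identities $\omega_f^2=K_{\hat S}^2-8(g-1)(b-1)$ and $\deg f_*\omega_f=\chi(\hol_{\hat S})-(g-1)(b-1)$, recovers $K_S^2\ge 4\chi(S)$ in a suitable limit; the equality hypothesis then forces asymptotic equality in the slope bound. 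By Xiao's analysis of the extremal case the fibration is hyperelliptic, so each fibre carries a canonical hyperelliptic involution; these glue to a global involution $\iota$ on $S$ whose quotient is $A$, exhibiting $\alpha$ as the double cover $S\to S/\iota\cong A$, so $d=2$.

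The main obstacle is the equality analysis itself, on two fronts. First, the asymptotic bookkeeping: as $m$ (hence $g$, the base genus $b$, and the exceptional corrections of the resolution) varies, one must show that all error terms are of strictly lower order, so that equality in Severi transfers to equality in the slope bound in the limit; controlling the vertical and exceptional contributions is the delicate computational core, and constructing the right family of fibrations with $b\ge 1$ is where care is needed. Second, promoting the fibrewise hyperelliptic involution to a genuine global involution $\iota$ compatible with $\alpha$, and identifying $S/\iota$ with the abelian surface $A$ itself rather than merely a birational model, requires the finiteness of $\alpha$ and a rigidity argument. Both points are exactly where the hypothesis that $K_S$ be ample is used decisively, since it makes $S$ its own canonical model and excludes contracted $(-2)$-curves.
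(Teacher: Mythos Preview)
The paper does not give a proof of this statement at all: Theorem~\ref{SevMan} is quoted from Manetti's paper \cite{manetti} as an external input, and is then applied verbatim in the proof of Theorem~\ref{Kample}. There is therefore nothing in the present paper to compare your argument against.

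That said, a few comments on your sketch as an attempt at Manetti's result. The easy direction (double cover of an abelian surface implies equality) is fine. For the hard direction, your first reduction---ruling out a general-type Albanese image $\Sigma$ by comparing $K_S^2$ and $\chi$ through the cover $\beta$---is not justified as written: $\Sigma$ may be singular, $\chi(\hol_S)$ does not decompose as $d\,\chi(\hol_\Sigma)$ plus something non-negative in any obvious way, and equality $K_\Sigma^2=4\chi(\hol_\Sigma)$ on $\Sigma$ would not by itself contradict anything. In Manetti's actual argument this step is not separated out; rather, the slope-inequality analysis via multiplication-by-$m$ covers of $A$ and the Cornalba--Harris/Xiao inequality is run directly, and the conclusion $q=2$ together with $d=2$ falls out of the same asymptotic computation. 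Your second and third paragraphs are closer in spirit to what Manetti does, but the passage from ``asymptotic equality in the slope bound'' to ``general fibre hyperelliptic'' to ``global involution with quotient $A$'' is precisely the hard part, and your outline does not indicate how the error terms are controlled or how one avoids the involution descending only to a birational model of $A$. These are the points where Manetti's ampleness hypothesis is genuinely used, as you correctly flag at the end.
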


\begin{proof}[Proof of theorem \ref{Kample}]
   We know that there is an \'etale $\ZZ_2^2$-cover $\hat{S}$ of $S$
with Albanese map $\hat{\alpha} :
\hat{S} \rightarrow E_1' \times E_2'$. Note that $K_{\hat{S}}^2 = 4
K_S^2 = 16$. By Severi's inequality, it
follows that $\chi(S) \leq 4$, but since $1 \leq \chi(S) =
\frac{1}{4} \chi(\hat{S})$, we have $\chi(S) = 4$.
Since $S$ deforms to a surface with $K_S$ ample, we can apply
Manetti's result and obtain that
$\hat{\alpha} : \hat{S} \rightarrow E_1' \times E_2'$ has degree $2$,
and we conclude as before.
\end{proof}

   It seems reasonable to conjecture (cf. \cite{manetti}) the
following, which would immediately imply our
conjecture \ref{conj1}.
\begin{conjecture}
   Let $S$ be a minimal smooth projective surface of maximal Albanese
dimension. Then $K_S^2 = 4 \chi(S)$ if
and only if $q(S) = 2$, and the Albanese map has degree $2$.
\end{conjecture}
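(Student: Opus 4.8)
The plan is to prove the two implications separately. The implication from ``$q(S)=2$ and $\deg\alpha=2$'' to ``$K_S^2=4\chi(S)$'' is the easy one, while the converse is the genuine content: it is exactly the equality case of Severi's inequality (Theorem~\ref{sevconj}), i.e. Manetti's Theorem~\ref{SevMan} with the hypothesis that $K_S$ is ample removed.

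For the easy implication, take the Stein factorisation $S\xrightarrow{g}Y\xrightarrow{\pi}A$ of the Albanese map, where $A=\Alb(S)$ is an abelian surface. Then $\pi$ is a finite flat double cover of $A$, hence determined by a line bundle $L$ on $A$ and a branch divisor $B\in|2L|$; minimality of $S$ forces $B$ to have at worst simple (canonical) singularities, so that $Y$ has rational double points, $g$ is the minimal resolution, and $K_S^2=K_Y^2$, $\chi(\mathcal O_S)=\chi(\mathcal O_Y)$. Since $K_A=0$, the standard double-cover formulae give $K_Y\equiv\pi^*L$ and $\pi_*\mathcal O_Y=\mathcal O_A\oplus L^{-1}$, whence $K_Y^2=2L^2$ and $\chi(\mathcal O_Y)=\chi(\mathcal O_A)+\tfrac{1}{2}L^2=\tfrac{1}{2}L^2$; therefore $K_S^2=2L^2=4\chi(\mathcal O_S)$.

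For the converse, assume only that $S$ is minimal of maximal Albanese dimension with $K_S^2=4\chi(S)$. I would try to rerun Manetti's proof of the equality case of Severi's inequality without using ampleness of $K_S$. One route: after an \'etale base change along a multiplication map of $\Alb(S)$ and a blow-up, fibre (a modification of) $S$ over a curve using a very general pencil in a fixed ample class pulled back from $\Alb(S)$, and then combine the slope inequality of Xiao for fibred surfaces with the Cornalba--Harris inequality; equality in Severi's inequality forces equality throughout this chain, which rigidifies the generic fibre and the relative structure and, after descending the base change, yields $q(S)=2$ together with $\deg\alpha=2$; a Hodge-theoretic degree argument, as in Manetti, then upgrades $\alpha$ to a genuine finite double cover of $\Alb(S)$. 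An alternative is to use the continuous-rank and eventual-paracanonical-map techniques: study $h^0(K_S\otimes\alpha^*P)$ for general $P\in\mathrm{Pic}^0(S)$ and show that the associated eventual map has degree dividing $2$ precisely when equality holds.

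The main obstacle is exactly the failure of ampleness. When $K_S$ is only nef and big, $S$ may carry $(-2)$-curves and $\alpha$ need not be finite, so Manetti's description of the limit case is not directly available. One must show that in the equality case these $(-2)$-curves are harmless: either that the equality already descends to the canonical model, on which $\alpha$ becomes finite, or, arguing directly on the minimal model, that the vertical $(-2)$-curves contribute nothing to the slope computation and in fact cannot occur once equality holds. I expect this boundary analysis to be essentially the whole difficulty; the surrounding argument is a rigidity analysis of inequalities already available in the literature.
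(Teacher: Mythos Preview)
The statement you are attempting to prove is labelled a \emph{conjecture} in the paper, not a theorem: it is introduced with ``It seems reasonable to conjecture'' and no proof is offered. There is therefore no proof in the paper to compare your proposal against; the paper treats this as an open problem whose resolution would immediately imply Conjecture~\ref{conj1}.

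Your treatment of the easy implication ($q=2$ and $\deg\alpha=2$ imply $K_S^2=4\chi$) is correct and standard. For the converse you correctly identify that the entire content is removing the ampleness hypothesis from Manetti's Theorem~\ref{SevMan}, and you honestly flag this as ``essentially the whole difficulty'' without actually carrying it out. That assessment is accurate: at the time of the paper the equality case of Severi's inequality for $K_S$ merely nef and big was genuinely open. Your sketch --- rerun the slope/Cornalba--Harris chain and argue that equality forces the $(-2)$-curves to be absent, or alternatively use continuous-rank and eventual-map techniques --- describes reasonable lines of attack, but the sentence ``equality throughout this chain rigidifies the generic fibre and the relative structure'' hides precisely the missing analysis. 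The non-finiteness of $\alpha$ when $K_S$ is not ample is not obviously harmless, and establishing that it cannot occur in the equality case is the crux, not a boundary detail.

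So your proposal is not a proof but a research outline, which is the appropriate status given that the paper itself presents the statement as a conjecture.
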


During the preparation of the article \cite{keumnaie} the authors
realized that a completely similar
argument applies to {\em primary Burniat surfaces}.

We briefly recall
the construction of Burniat surfaces:
    for more details, and for the proof that Burniat surfaces are
exactly certain Inoue surfaces we refer to
\cite{burniat1}.

Burniat surfaces are minimal surfaces of general type with $K^2
=6,5,4,3,2$ and $p_g = 0$, which were
constructed in \cite{burniat} as singular bidouble covers (Galois
covers with group $\ZZ_2^2$) of the
projective plane branched on 9 lines.

\medskip
\noindent Let $P_1, P_2, P_3 \in \PP^2$ be three non collinear points
(which we assume to be the points
$(1:0:0)$, $(0:1:0)$ and $(0:0:1)$)  and let's denote by
$Y:=\hat{\PP}^2(P_1, P_2,P_3)$ the Del Pezzo surface of degree $6$,
blow up of $\PP^2$ in $P_1, P_2, P_3$.

     $Y$ is `the' smooth Del Pezzo surface of degree $6$, and it is the
closure of the graph of the rational map
$$\epsilon: \PP^2 \dashrightarrow \PP^1 \times \PP^1 \times \PP^1$$
such that $$\epsilon (y_1 : y_2:
y_3)  = ((y_2 : y_3), ( y_3: y_1), ( y_1: y_2)).$$

One sees immediately that $Y \subset \PP^1 \times \PP^1 \times \PP^1$
is the hypersurface of type
$(1,1,1)$:
$$ Y = \{(( x_1' : x_1), (  x_2':  x_2), ( x_3':  x_3)) \ | \ x_1 x_2
x_3 = x_1' x_2' x_3' \}.$$

We denote by $L$ the total transform of a general line in $\PP^2$, by
$E_i$ the exceptional curve lying over $P_i$,   and by $D_{i,1} $ the
unique effective divisor in $ |L - E_i-
E_{i+1}|$, i.e., the proper transform of the line $y_{i-1} = 0$, side
of the triangle joining the points $P_i,
P_{i+1}$.

Consider on $Y$,  for each   $  i \in  \ZZ_3 \cong \{1,2,3\}$,
    the following divisors
$$ D_i = D_{i,1} + D_{i,2} + D_{i,3} + E_{i+2} \in |3L - 3E_i -
E_{i+1}+E_{i+2}|,$$

where $D_{i,j} \in |L - E_i|, \ \rm{for} \ j = 2,3, \  D_{i,j} \neq
D_{i,1}$, is the proper transform of another
line through
$P_i$ and $D_{i,1} \in |L - E_i- E_{i+1}|$ is as above. Assume also
that all the corresponding lines in $\PP^2$
are distinct, so that $D : = \sum_i D_i$ is  a reduced divisor.

Note that, if we define  the divisor $\mathcal{L}_i : = 3L - 2
E_{i-1} - E_{i+1}$, then
$$D_{i-1} + D_{i+1} = 6L - 4 E_{i-1} - 2E_{i+1} \equiv 2
\mathcal{L}_i,$$ and we can consider (cf.  section 4, \cite{cat1} and 
\cite{sbc}) the associated
bidouble cover $X' \rightarrow Y$
branched on $D : = \sum_i D_i$ (but we take a different ordering of
the indices of the fibre coordinates
$u_i$, using the same choice as the one made in \cite{burniat1},
except that $X'$ was denoted by $X$).

We recall that this precisely means the following: let $D_i =
\divi(\delta_i)$, and let $u_i$ be
       a fibre coordinate of the geometric line bundle $\LL_{i+1}$,
whose sheaf of holomorphic sections is
$\hol_Y(\mathcal{L}_{i+1})$.

Then $X \subset \LL_1 \oplus \LL_2 \oplus
\LL_3$ is given by the equations:
$$ u_1u_2 = \delta_1 u_3, \ \ u_1^2 = \delta_3 \delta_1;
$$
$$ u_2u_3 = \delta_2 u_1, \ \ u_2^2 = \delta_1 \delta_2;
$$
$$ u_3 u_1 = \delta_3 u_2, \ \ u_3^2 = \delta_2 \delta_3.
$$

      From the birational point of view, as done by Burniat, we are
simply adjoining to the function field of
$\PP^2$ two square roots, namely $\sqrt  \frac{\Delta_1}{ \Delta_3}$ and
$\sqrt  \frac{\Delta_2}{ \Delta_3}$, where $\Delta_i$ is the cubic
polynomial in $\CC[x_0,x_1,x_2]$ whose
zero set has $D_i - E_{i+2}$ as strict transform.

This shows clearly that we have a Galois cover $X' \ra Y$ with group
$\ZZ_2^2$.

The equations above give a biregular model $X'$ which is nonsingular
exactly if the divisor $D$ does not have
points of multiplicity 3 (there cannot be points of higher
multiplicities!). These points give then quotient
singularities of type $\frac{1}{4} (1,1) $, i.e., isomorphic to the
quotient of $\CC^2$ by the action of $\ZZ_4$
sending
$ (u,v) \mapsto (iu, iv)$ (or, equivalently, the affine cone over
the 4-th Veronese embedding of $\PP^1$).

\begin{definition}
       A {\em primary Burniat surface} is a surface constructed as
above, and which is moreover smooth.
       It is then a minimal surface $S$ with $K_S$ ample, and with
$K_S^2 = 6$, $p_g(S) =q(S)= 0$.

        A  {\em secondary Burniat surface} is the minimal resolution of
a surface $X'$ constructed as above, and
which moreover has
$ 1 \leq m \leq 2$ singular points (necessarily of the type described above).
       Its minimal resolution is then a minimal surface $S$ with $K_S$
nef and big, and with $K_S^2 = 6-m$,
$p_g(S) =q(S)= 0$.

       A {\em tertiary (respectively, quaternary) Burniat surface} is
the minimal resolution of a surface $X'$
constructed as above, and which  moreover has $ m = 3 $ (respectively
$ m=4$) singular points (necessarily of the type described above).
       Its minimal resolution is then a minimal surface $S$ with $K_S$
nef and big, but not ample,  and with
$K_S^2 = 6-m$,
$p_g(S) =q(S)= 0$.

\end{definition}

\begin{remark} 1) We remark that for $K_S^2 =4$ there are two possible
types of configurations. The  one where
there are three collinear points of multiplicity at least 3 for the
plane curve formed by the 9 lines leads to a
Burniat surface $S$ which we call of  {\em nodal type}, and with
$K_S$ not ample, since the inverse image of the line joining the 3
collinear points is a (-2)-curve (a smooth
rational curve of self intersection $-2$).

     In the other cases with $K_S^2 =4, 5, 6$, $K_S$ is instead ample.

2) In the nodal case,  if we  blow up  the two $(1,1,1)$ points of
$D$, we obtain a weak Del Pezzo surface $\tilde{Y}$, since it
contains a (-2)-curve. Its anticanonical model
$Y'$ has a node (an
$A_1$-singularity, corresponding to the contraction of the
(-2)-curve). In the non nodal case, we obtain a
smooth Del Pezzo surface $\tilde{Y} = Y'$ of degree $4$.
\end{remark}

With similar methods as in \cite{keumnaie} (cf. \cite{burniat1}) the
first two authors  proved

\begin{theorem} The  subset of the Gieseker moduli space corresponding
to primary Burniat surfaces is an
irreducible connected component, normal, rational and of dimension
four. More generally, any
surface homotopically equivalent to a primary Burniat surface is
indeed a primary Burniat surface.
\end{theorem}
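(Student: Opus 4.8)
The plan is to follow the strategy used above for Keum--Naie surfaces, replacing the \'etale $\ZZ_2^2$-cover by an \'etale $\ZZ_2^3$-cover. Recall from \cite{burniat1} that a primary Burniat surface $S$ is minimal of general type with $K_S^2=6$, $p_g(S)=q(S)=0$, $K_S$ ample, and that $\Gamma:=\pi_1(S)$ sits in an exact sequence $1\to\ZZ^6\to\Gamma\to\ZZ_2^3\to1$. First I would prove the analogue of Proposition~\ref{dc}. Let $\hat S\to S$ be the \'etale $\ZZ_2^3$-cover corresponding to $\ZZ^6\triangleleft\Gamma$; then $\pi_1(\hat S)\cong\ZZ^6$, so $q(\hat S)=3$ and the Albanese variety of $\hat S$ is an abelian threefold carrying a $\ZZ_2^3$-action which splits $H^0(\Omega^1_{\hat S})\cong\CC^3$ into three one-dimensional eigenspaces; hence this threefold is a product $E_1\times E_2\times E_3$ of elliptic curves. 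I would then show that the Albanese map $\hat\alpha\colon\hat S\to E_1\times E_2\times E_3$ has image a divisor $Y_0$ of type $(1,1,1)$ (a surface with $K_{Y_0}^2=6$) and that, via its Stein factorization, $\hat\alpha$ realizes $\hat S$ as a finite bidouble ($\ZZ_2^2$) cover of $Y_0$ branched over the pull-back of a configuration of nine ``lines'' through three ``coordinate vertices''. This recovers the Burniat construction, with $S=\hat S/\ZZ_2^3$.

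Next I would establish the homotopy-rigidity statement. If $S'$ is a smooth projective surface homotopically equivalent to a primary Burniat surface $S$, then, since $\chi(\hol)$, the Euler number and the intersection form are homotopy invariants, one gets $K_{S'}^2=6$, $\chi(\hol_{S'})=1$, $p_g(S')=q(S')=0$ and $\pi_1(S')\cong\Gamma$; using the structure of $\Gamma$ together with $K_{S'}^2=6>0$ one checks that $S'$ is minimal of general type. Pulling the surjection $\Gamma\twoheadrightarrow\ZZ_2^3$ back along the homotopy equivalence yields an \'etale $\ZZ_2^3$-cover $\hat S'\to S'$ homotopically equivalent to $\hat S$, with $\pi_1(\hat S')\cong\ZZ^6$, hence $q(\hat S')=3$ and Albanese threefold a product $E_1'\times E_2'\times E_3'$ of elliptic curves as above. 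The crucial point is that the class of the Albanese image and the degree of the Albanese map onto it are determined by the cohomology ring and the Chern classes, all homotopy invariants; they therefore coincide with the values found for $\hat S$, forcing the image of $\hat\alpha'$ to be again a $(1,1,1)$-divisor $Y_0'$ and $\hat\alpha'$ to have the expected degree onto it. The structure theorems for abelian covers (cf.\ \cite{horikawa78}, \cite{cat1}, \cite{pardiniabelian}) then force the branch data on $Y_0'$ to be a Burniat configuration of nine lines; a triple point of that configuration would make the resolution of the biregular model have $K^2<6$, contradicting $K_{S'}^2=6$, so the model is smooth and $S'=\hat S'/\ZZ_2^3$ is a primary Burniat surface. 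In particular the primary Burniat locus in the Gieseker moduli space is closed, hence a union of connected components.

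Finally I would identify this locus with a single irreducible rational component of dimension $4$ having smooth local moduli. The Burniat construction depends, after normalizing the three vertices $P_1,P_2,P_3$ to the coordinate points (which uses up $\mathrm{PGL}(3,\CC)$ modulo the $2$-dimensional torus fixing the triangle), on the choice of two further lines through each $P_i$, i.e.\ on two points in each of the three pencils $|L-E_i|$; this gives $6$ parameters of which $2$ are absorbed by the residual torus, so the family is an open subset of a rational variety of dimension $4$, hence irreducible and rational. To see that this open set is open in the moduli space, I would apply Proposition~\ref{natdef} to the bidouble cover $f\colon S\to Y$ of the degree $6$ del Pezzo surface $Y$: after checking the vanishings $H^1(\hol_Y(-L_\chi))=0$ and $\Ext^1_{\hol_S}(f^*\Omega^1_Y,\hol_S)=0$, every small deformation of $S$ is a natural deformation of the cover, the base of the family is smooth, and the Kodaira--Spencer map is surjective. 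Combined with the closedness above, the primary Burniat surfaces form an irreducible connected component of $\mathfrak{M}^{can}_{(1,6)}$, which is normal (being unobstructed) and rational of dimension $4$.

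The hard part will be, exactly as in the Keum--Naie case, the second step: extracting from the bare homotopy type of $S'$ enough complex-analytic information --- the product structure of the Albanese threefold of $\hat S'$, the class of the Albanese image, and above all the degree of $\hat\alpha'$ onto that image --- to rigidify $\hat S'$, and then following the abelian-cover computation carefully enough to conclude that the branch divisor is precisely the Burniat configuration of nine lines. A subsidiary technical obstacle is verifying the cohomological vanishings required to apply Proposition~\ref{natdef} and carrying out the parameter count so as to obtain exactly dimension $4$.
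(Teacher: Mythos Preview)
Your overall strategy---pass to the \'etale $\ZZ_2^3$-cover $\hat S$, analyse its Albanese map, and recover the Burniat data---is exactly the method the paper points to (``similar methods as in \cite{keumnaie}, cf.\ \cite{burniat1}''). The dimension count and the use of natural deformations for openness are also in line with what is done there.

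However, your description of the Albanese map of $\hat S$ is wrong, and this is the heart of the argument. You claim that the image is a $(1,1,1)$ divisor $Y_0$ in $E_1\times E_2\times E_3$ and that the Stein factorization exhibits $\hat S$ as a bidouble cover of $Y_0$. This cannot be: the Stein factorization of a generically finite morphism is birational onto the (normalization of the) image, never a degree-$4$ cover. What actually happens (and this is the Inoue description of Burniat surfaces, shown in \cite{burniat1} to coincide with Burniat's) is that $\hat\alpha$ is an \emph{embedding} of $\hat S$ as a smooth $\ZZ_2^3$-invariant divisor of multidegree $(2,2,2)$ in $E_1\times E_2\times E_3$. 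The numerics confirm this: a $(2,2,2)$ divisor $D$ has $K_D^2=D^3=6\cdot 2^3=48=8\cdot K_S^2$ and $\chi(\hol_D)=8$, and Lefschetz gives $q(D)=3$ with Albanese equal to the ambient product, so the Albanese map is the inclusion. Your $(1,1,1)$ divisor has $K^2=6$ and $\chi=1$, which do not match the invariants of $\hat S$.

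Once the correct picture is in place, the rest of your plan goes through: one shows that for any $S'$ homotopy equivalent to $S$ the cover $\hat S'$ has $q=3$, maximal Albanese dimension, and that the cohomological invariants force $\hat\alpha'$ to realise $\hat S'$ as a $(2,2,2)$ hypersurface in a product of three elliptic curves; the $\ZZ_2^3$-invariance of this hypersurface and a direct analysis of the linear system $|(2,2,2)|^{\ZZ_2^3}$ then recover the nine-line Burniat configuration on the del Pezzo quotient. So the gap is not in the architecture of your proof but in the key geometric identification of $\hat S$ inside its Albanese.
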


\begin{remark}
   The assertion that the moduli space
corresponding to primary Burniat
surfaces is rational  needs indeed a further argument, which is
carried out in \cite{burniat1}.
\end{remark}


\end{document}